\author{Samuel TAPIE}
\date{06/11/2009}
\title{Graphes, moyennabilité et bas du spectre de variétés topologiquement infinies}
\begin{document}
\maketitle

\selectlanguage{francais}

\begin{abstract}
A partir d'un graphe $G$ à valence constante $v$ et d'une variété $C$ (non compacte) à $v$ composantes de bords (la <<cellule>>), on construit une variété $M$ en disposant des copies de $C$ sur chaque sommet de $G$ et en identifiant les bords qui correspondent à des arêtes de $G$. Nous obtenons ainsi une classe de variétés topologiquement infinies qui généralise les variétés périodiques sur laquelle étudions le bas du spectre du Laplacien. Notre principal résultat est que, lorsque la première fonction propre de $C$ se prolonge à $M$, le bas du spectre de $M$ est égal à celui de $C$ si et seulement si le graphe $G$ est moyennable. Lorsque $G$ n'est pas moyennable, nous contrôlons explicitement l'écart entre ces deux bas du spectre à l'aide de la combinatoire de $G$. Cela nous permet en particulier de montrer que si $p : M\ra N$ est un revêtement riemannien, et la métrique de $N$ est générique, alors $\lambda_0(M)\geq \lambda_0(N)$ avec égalité si et seulement si le groupe de revêtement est moyennable. Ceci généralise notamment un résultat de R. Brooks.
\end{abstract}

\tableofcontents{}

\section*{Introduction}
L'étude du lien entre les propriétés géométriques d'une variété riemannienne et ses propriétés spectrales est un domaine actif depuis plusieurs décénies, à l'origine de nombreux outils analytiques appliqués à la géométrie. Nous nous intéressons ici à un cas peu étudié, celui de variétés topologiquement infinies, c'est-à-dire dont le groupe fondamental n'est pas de type fini. Les revêtements riemanniens fournissent un grand nombre d'exemples ; nous en présentons une classe un peu plus vaste : les variétés construites à partir d'une infinité de copies d'une unique cellule de base, recollées en suivant un graphe. 

\pgh

A la première section, nous présentons les outils dont nous aurons besoin : une introduction à la théorie spectrale, quelques résultats sur les domaines fondamentaux de revêtements riemanniens, des méthodes de contrôle de la géométrie au voisinage d'une hypersurface, et quelques notions de combinatoires des graphes. A la deuxième section, nous présentons la construction des variétés \emph{$G$-périodiques} (voir Définition \ref{def:GPeriod}), qui sont construites à partir d'une \emph{cellule} à bords, dont on place une copie à chaque sommet d'un graphe $G$ à valence constante en identifiant les composantes de bord qui correspondent à une arête du graphe. Nous détaillons dans cette section quelques exemples, en particulier certaines surfaces hyperboliques de genre infini et les revêtements riemanniens, puis nous y présentons nos résultats. Sur la cellule, nous considérons le Laplacien avec condition de Neumann au bord. Notre théorème principal est le suivant :

\begin{theo*}
	Soit $C$ une cellule de valence $v$ qui admet une première fonction propre $\phi_0$ et un trou spectral $\eta = \lambda_1(C)-\lambda_0(C)>0$. Pour graphe $G$ de valence constante $v$ et toute variété $G$-périodique $M$ de cellule $C$ où $\phi_0$ se recolle bien, si $G$ est moyennable alors $\lambda_0(M) = \lambda_0(C)$. Si $G$ n'est pas moyennable, il existe une constante $A$ ne dépendant que de la géométrie sur les bords de $C$ telle que pour tout graphe $G$ de même valence $v$, et 
	$$\lambda_0(M)-\lambda_0(C)\geq A\eta\mu_0(G)>0,$$
où $\mu_0(G)$ est le bas du spectre combinatoire de $G$. De plus, lorsque chaque cellule n'admet qu'un nombre fini de voisins, on a
$$\lambda_0(M)-\lambda_0(C)\leq A'\g h(G),$$
où $\g h(G)$ est la constante de Cheeger de $G$ et $A'$ ne dépend que de la géométrie sur le bord de $C$ et de $\lambda_0(C)$.
	\end{theo*}

Si l'on ne cherche pas à contrôler l'écart entre les bas du spectre, ce résultat s'énonce simplement :

\begin{coro*}
Soit $C$ une cellule de valence $v$ qui admet une première fonction propre $\phi_0$ et un trou spectral strictement positif. Pour toute variété $G$-périodique $M$ de celulle $C$ où $\phi_0$ se recolle bien, $\lambda_0(M) \geq \lambda_0(C)$ avec égalité si et seulement si le graphe $G$ est moyennable.
\end{coro*}

Notre résultat s'applique en particulier aux revêtements qui admettent un domaine fondamental adapté :
	\begin{coro*}
Soit $N$ une variété dont le rayon d'injectivité et le trou spectral sont strictement positifs. Notons $\phi_0$ sa première fonction propre. Soit $p : M\ra N$ un revêtement riemannien. Supposons qu'il existe un domaine fondamental pour l'action du groupe de revêtement $\Gamma$ de $p$ à bord $\Cl C^1$ par morceaux sur lequel le relevé de $\phi_0$ vérifie les conditions de Neumann. Alors $\lambda_0(M) \geq \lambda_0(N)$, avec égalité si et seulement si le groupe $\Gamma$ est moyennable.
	\end{coro*}

Nous avons montré dans \cite{Tap09b} que si la courbure sectionnelle et les $k\geq \frac{n}{2}$ dérivées de la courbure de $N$ sont bornées et la métrique de $N$ est générique, alors la première fonction propre est de Morse et pour tout revêtement, il existe un domaine fondamental pour l'action du groupe de revêtement à bord $\Cl C^1$ par morceaux sur lequel le relevé de $\phi_0$ vérifie les conditions de Neumann. Nous obtenons donc le corollaire :
\begin{coro*}\label{coro*:SpecRevet}
Soit $N$ une variété non compacte de dimension $n$ dont le trou spectral et le rayon d'injectivité sont strictement positifs. Si la courbure sectionnelle et les $k\geq \frac{n}{2}$ premières dérivées du tenseur de courbure de $N$ sont bornées et si la métrique est générique, alors pour tout revêtement riemannien $p : M\ra N$ de groupe d'automorphisme $\Gamma_p$, on a
$$\lambda_0(M) \geq \lambda_0(N)$$
avec égalité si et seulement si $\Gamma_p$ est moyennable.
\end{coro*}

Nous détaillons ces résultats au Paragraphe \ref{ssec:Result}, après en avoir défini tous les termes. A la troisième section, nous les démontrons. Enfin, nous présentons quelques applications de nos techniques, en particulier le cas des revêtements et une méthode de calcul approché du bas du spectre à partir des propriétés combinatoires d'un découpage de $M$. 
	
\pgh

Cette étude est une partie de ma thèse, effectuée sous la direction de Gérard Besson et Gilles Courtois. Il a été possible avant tout grâce à leur patience et leurs encouragements répétés. J'ai été invité plusieurs fois à exposer ces résultats : un grand merci en particulier à Constantin Vernicos, Françoise Dalb'o et Bruno Colbois pour ces séjours fructueux qui ont permis la maturation de ce travail. Merci également au rapporteur pour ses remarques lors d'une première soumission de cet article, qui ont abouti à un net approfondissement de ses résultats, et à Gilles Carron qui a patiemment relevé les erreurs de calculs qui y restaient lors de la relecture de mon manuscrit de thèse. 

\section{Préliminaires}

Tout au long de cet article, nous considèrerons des \ind{variétés riemanniennes} de dimension finie $n\geq 2$, dénombrables à l'infini et nous supposerons toujours (sauf mention explicite) qu'elles sont \textbf{connexes} et \textbf{complètes}. Lorsque nous considèrerons un \ind{revêtement} $p : M\ra N$, nous supposerons toujours que la structure riemannienne de $M$ est relevée de celle de $N$. Un <<revêtement>> signifiera toujours un \ind{revêtement galoisien}. 
	        
        \subsection{Eléments de Théorie spectrale}\label{ssec:PreSpec}
        
Nous présentons, dans ce paragraphe, les éléments de théorie spectrale nécessaires à la compréhension de nos travaux. Le lecteur intéressé par un exposé plus complet pourra se référer par exemple à \cite{Cha84}.
    
Nous appellerons simplement \ind{Laplacien} l'opérateur de \ind{Laplace-Beltrami} $\Delta_g$ défini sur toute fonction $\Cl C^2$ sur $M$ (à valeur réelle) par
    $$\Delta_g f = \gdiv(\grad f) := -\Trace((\nabla_g)_. (\nabla_g f)),$$
où $\nabla_g$ est la connexion de Levi-Civita associée à la métrique $g$. Lorsqu'il n'y aura pas d'ambigüité possibles sur la métrique utilisée, nous omettrons l'indice $g$. Nous dirons qu'une fonction $\phi$ de classe $\Cl C^1$ sur $M$ vérifie les \indb{conditions}{de Dirichlet} si elle est nulle sur $\bd M$, et les \indb{conditions}{de Neumann} si en tout point de $\bd M$ son gradient est tangent au bord. Lorsque l'espace tangent au bord n'est défini qu'en dehors d'un ensemble de mesure nulle $E$ (par exemple en dehors des \emph{coins} de $\bd M$, voir paragraphe \ref{sec:CoinsRevet}), nous dirons encore qu'une fonction vérifie les conditions de Neumann si elle les vérifie en dehors de $E$. Nous imposerons par la suite des conditions de Neumann dès que nous serons sur des variétés à bord.
    
\pgh

On note $L^2(M)$ l'ensemble des fonctions sur $M$ à valeurs réelles de carré sommable, $\Cl H^1(M)$ l'ensemble des fonctions de $L^2$ dont le gradient au sens des distributions est un champ de vecteur de carré sommable, et $\Cl H^2(M)$ l'ensemble des fonctions de $\Cl H^1(M)$ dont le Laplacien (au sens des distributions) est une fonction $L^2$. Le Laplacien avec conditions de Neumann s'étend alors en un opérateur non borné sur $L^2(M)$ (voir par exemple \cite{ReeSim80} section VIII), dont le domaine de définition maximal est $\Cl H^2(M)$. 
    
Nous noterons par la suite $\norm{.}$ la norme $L^2$, en précisant le domaine d'intégration en cas d'ambigüité. D'après la formule de Green, le Laplacien avec conditions de Neumann est toujours l'opérateur associé à la forme quadratique $\norm{\nabla f}^2$ sur son domaine $\Cl H^2$, et est donc positif ou nul. On appelle \ind{spectre du Laplacien} de $M$ l'ensemble des $\lambda\in\Bb R$ tels que l'opérateur
$$\Delta-\lambda : \Cl H^2\ra L^2$$
n'est pas inversible ; il est inclus dans $\Bb R_+$ puisque $\Delta$ est positif. Le \ind{bas du spectre} est sa borne inférieure. Un résultat classique d'analyse hilbertienne nous donne la caractérisation suivante du bas du spectre :
    
\begin{prop}[Principe du Min-Max]\label{prop:MinMax}
Le bas du spectre (avec condition de Neumann si $M$ a un bord) est donné par
$$\lambda_0(M) = \inf_f\left\{\frac{\norm{\nabla f}^2}{\norm{f}^2}\right\}$$
    o\`u $f$ parcourt l'ensemble des fonctions de $\Cl H^1(M)$.
\end{prop}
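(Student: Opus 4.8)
Le plan est d'appliquer le th\'eor\`eme spectral \`a l'op\'erateur de Laplace--Beltrami avec condition de Neumann, qui est auto-adjoint et positif sur $L^2(M)$ de domaine $\Cl H^2(M)$. On dispose alors d'une mesure spectrale $(E_\lambda)_{\lambda\geq 0}$ telle que $\Delta=\int_0^{\infty}\lambda\, dE_\lambda$, et $\lambda_0(M)$, \'etant l'infimum du spectre, est le plus petit $\lambda$ tel que $E_{[0,\lambda]}\neq 0$ ; comme la mesure $d\langle E_\lambda f,f\rangle$ est port\'ee par le spectre, elle est port\'ee par $[\lambda_0(M),\infty)$. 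La forme quadratique associ\'ee, $Q(f)=\langle\Delta f,f\rangle=\int_0^{\infty}\lambda\, d\langle E_\lambda f,f\rangle$, d\'efinie a priori sur $\Cl H^2(M)$, s'\'etend par fermeture \`a son domaine de forme ; d'apr\`es la formule de Green rappel\'ee ci-dessus, cette extension co\"incide avec $f\mapsto\norm{\nabla f}^2$ et son domaine de forme est exactement $\Cl H^1(M)$. J'\'ecris $R(f)=\norm{\nabla f}^2/\norm{f}^2$ pour le quotient de Rayleigh.

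La minoration $R(f)\geq\lambda_0(M)$ pour tout $f\in\Cl H^1(M)\setminus\{0\}$ est imm\'ediate : puisque $d\langle E_\lambda f,f\rangle$ est port\'ee par $[\lambda_0(M),\infty)$,
$$\norm{\nabla f}^2=\int_0^{\infty}\lambda\, d\langle E_\lambda f,f\rangle\geq\lambda_0(M)\int_0^{\infty}d\langle E_\lambda f,f\rangle=\lambda_0(M)\norm{f}^2.$$
On en d\'eduit aussit\^ot $\inf_f R(f)\geq\lambda_0(M)$.

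Pour l'in\'egalit\'e inverse, je construirais des fonctions propres approch\'ees. Fixons $\varepsilon>0$ : comme $\lambda_0(M)$ appartient au spectre de $\Delta$, le projecteur spectral $E_{[\lambda_0(M),\lambda_0(M)+\varepsilon)}$ est non nul, et l'on peut choisir un vecteur unitaire $f$ dans son image. Un tel $f$ appartient au domaine de forme $\Cl H^1(M)$ (sa mesure spectrale \'etant \`a support born\'e, $f$ est dans le domaine de toute puissance de $\Delta$, en particulier dans $\Cl H^2(M)\subset\Cl H^1(M)$), et
$$\norm{\nabla f}^2=\int_{\lambda_0(M)}^{\lambda_0(M)+\varepsilon}\lambda\, d\langle E_\lambda f,f\rangle\leq\bigl(\lambda_0(M)+\varepsilon\bigr)\norm{f}^2.$$
Ainsi $\inf_f R(f)\leq\lambda_0(M)+\varepsilon$ pour tout $\varepsilon>0$, d'o\`u $\inf_f R(f)\leq\lambda_0(M)$. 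En combinant les deux in\'egalit\'es, on obtient l'\'egalit\'e annonc\'ee.

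Le point d\'elicat n'est pas cette manipulation de calcul fonctionnel, mais l'identification du domaine de forme abstrait de $\Delta$ avec l'espace de Sobolev concret $\Cl H^1(M)$, et le fait que l'extension ferm\'ee de $Q$ y co\"incide bien avec $\norm{\nabla f}^2$. C'est particuli\`erement sensible dans le cadre de Neumann, o\`u le bord peut n'\^etre que $\Cl C^1$ par morceaux (pr\'esence de coins) : la densit\'e des fonctions r\'eguli\`eres et la r\'egularit\'e elliptique n\'ecessaires rel\`event des r\'esultats cit\'es dans \cite{ReeSim80}, et sont ici acquises par la construction m\^eme de l'op\'erateur auto-adjoint.
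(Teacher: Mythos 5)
Votre preuve est correcte ; notez que le papier ne d\'emontre pas cette proposition, qu'il invoque comme \og un r\'esultat classique d'analyse hilbertienne \fg{}, et votre argument par le th\'eor\`eme spectral (minoration par le support de la mesure spectrale dans $[\lambda_0(M),\infty)$, majoration par un vecteur unitaire dans l'image de $E_{[\lambda_0(M),\lambda_0(M)+\varepsilon)}$) est pr\'ecis\'ement la d\'emonstration standard de ce r\'esultat classique. Le point que vous signalez comme d\'elicat --- l'identification du domaine de forme avec $\Cl H^1(M)$ et de la forme ferm\'ee avec $f\mapsto\norm{\nabla f}^2$ --- est en fait acquis par la d\'efinition m\^eme adopt\'ee dans le papier, qui pose le Laplacien de Neumann comme l'op\'erateur associ\'e \`a la forme quadratique $\norm{\nabla f}^2$ (ferm\'ee car $\Cl H^1(M)$ est complet), de sorte que votre argument se referme sans lacune.
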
 

    Pour toute fonction $f$ de $\Cl H^1$ (par exemple continue et $\Cl C^1$ par morceaux), on appelle $\frac{\norm{\nabla f}^2}{\norm{f}^2}$ son \ind{quotient de Rayleigh}. Nous dirons qu'une fonction $f$ est \ind{$\lambda$-harmonique} si $\Delta f = \lambda f$, et \indb{fonction}{propre} du Laplacien (avec condition de Neumann) associée à la valeur propre $\lambda$ si elle est dans $\Cl H^2$ et $\lambda$-harmonique. Une valeur propre est un point du spectre, donc nécessairement positive ou nulle. Si $M$ est \emph{compacte}, on montre que le spectre est l'ensemble (discret) de la suite de ses valeurs propres, qui sont alors de multiplicité finie. Pour $M$ de \emph{volume fini}, $\lambda_0 = 0$ est valeur propre associée aux fonctions constantes. Lorsque $M$ n'est pas de volume fini, l'existence de fonctions propres (et donc de valeurs propres) n'est pas assurée. Le résultat suivant, que nous utiliserons souvent par la suite, regroupe plusieurs théorèmes classiques :

    \begin{theo}\label{theo:EigFunc}
    S'il existe une fonction $\phi_0\in \Cl H^1(M)$ telle que son quotient de Rayleigh soit égal à $\lambda_0(M)$, alors $\phi_0$ est fonction propre du Laplacien, avec condition de Neumann si $\bd M\neq \vd$. Elle est de signe strictement constant sur $M$, de classe $\Cl C^\infty$ sur $\inter{M}$. Toute fonction propre du Laplacien associée à la valeur $\lambda_0$ est alors proportionnelle à $\phi_0$. 
    \end{theo}

L'unicité découle du fait que le signe d'une fonction propre associée au bas du spectre est constant. Ceci s'obtient en montrant que le quotient de Rayleigh d'une fonction est plus grand que celui de sa valeur absolue, et par le Principe du Maximum une fonction propre positive ne peut s'annuler en un point intérieur de la variété. Si $M$ a un bord, une fonction qui réalise le minimum des quotients de Rayleigh vérifie automatiquement les conditions de Neumann (et est donc fonction propre pour le Laplacien avec condition de Neumann), et sa stricte positivité sur $\bd M$ découle alors du Principe du Maximum Fort (voir \cite{ProWei84}, Chapitre 2).

    Lorsque nous parlerons de \emph{la première fonction propre}, nous sous-entendrons donc par là que nous parlons de l'unique fonction propre positive normalisée associée au bas du spectre (quand elle existe).

    Nous aurons besoin des propriétés élémentaires du bas du spectre suivantes, dont nous laissons la démonstration au lecteur à partir du Min-Max et du théorème précédent :

    \begin{prop}\label{prop:MonoNeum}
    Soit $M = \bigcup_i M_i$ une partition de la variété $M$ en morceaux d'intérieur non vide dont les bords sont $\Cl C¹$ par morceaux. On a $$\lambda_0(M)\geq\inf_i\{\lambda_0(M_i)\}.$$
    \end{prop}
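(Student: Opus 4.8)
Le plan est d'appliquer le Principe du Min-Max (Proposition~\ref{prop:MinMax}) s\'epar\'ement sur chaque morceau de la partition, puis de recoller les in\'egalit\'es obtenues. Je prendrais d'abord une fonction test arbitraire $f\in\Cl H^1(M)$ et je consid\'ererais ses restrictions $f_i:=f|_{M_i}$. Comme les bords des $M_i$ sont $\Cl C^1$ par morceaux, la restriction d'une fonction de $\Cl H^1(M)$ \`a chaque morceau appartient \`a $\Cl H^1(M_i)$, son gradient distributionnel co\"incidant presque partout avec la restriction du gradient de $f$. Chaque $f_i$ est donc une fonction test admissible pour le Min-Max sur $M_i$.

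Le point central de l'argument est alors l'observation suivante: pour chaque indice $i$, la caract\'erisation variationnelle du bas du spectre avec condition de Neumann s'applique \`a $f_i$ \emph{sans aucune hypoth\`ese de bord}, car l'infimum du Min-Max sur $M_i$ porte sur tout $\Cl H^1(M_i)$. On obtient ainsi, pour tout $i$,
$$\norm{\nabla f_i}_{M_i}^2\;\geq\;\lambda_0(M_i)\,\norm{f_i}_{M_i}^2\;\geq\;\Bigl(\inf_j\lambda_0(M_j)\Bigr)\,\norm{f_i}_{M_i}^2.$$

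Il reste \`a sommer ces in\'egalit\'es sur l'ensemble des morceaux. Puisque les $M_i$ recouvrent $M$ et que leurs intersections sont de mesure nulle (les bords \'etant $\Cl C^1$ par morceaux), les normes $L^2$ se d\'ecoupent additivement:
$$\norm{\nabla f}_M^2=\sum_i\norm{\nabla f_i}_{M_i}^2,\qquad \norm{f}_M^2=\sum_i\norm{f_i}_{M_i}^2.$$
En combinant avec les in\'egalit\'es ci-dessus, on en d\'eduit $\norm{\nabla f}_M^2\geq\bigl(\inf_j\lambda_0(M_j)\bigr)\norm{f}_M^2$, autrement dit le quotient de Rayleigh de $f$ sur $M$ est minor\'e par $\inf_j\lambda_0(M_j)$. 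Comme $f\in\Cl H^1(M)$ est quelconque, un dernier appel au Min-Max (Proposition~\ref{prop:MinMax}) sur $M$ donne $\lambda_0(M)\geq\inf_i\lambda_0(M_i)$.

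La principale subtilit\'e technique est de justifier que la restriction de $f\in\Cl H^1(M)$ \`a chaque $M_i$ reste dans $\Cl H^1(M_i)$ et que les normes se d\'ecomposent proprement malgr\'e les \'eventuels coins du bord; c'est pr\'ecis\'ement l\`a qu'intervient l'hypoth\`ese de r\'egularit\'e $\Cl C^1$ par morceaux. Une fois ce point de th\'eorie de Sobolev acquis, le reste n'est qu'un recollement d'in\'egalit\'es int\'egrales, conform\'ement \`a l'indication de l'\'enonc\'e selon laquelle la preuve d\'ecoule directement du Min-Max.
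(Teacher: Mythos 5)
Votre d\'emonstration est correcte et correspond exactement \`a l'approche que le papier a en vue : il laisse cette preuve au lecteur <<\`a partir du Min-Max>>, et c'est pr\'ecis\'ement ce que vous faites en restreignant une fonction test quelconque de $\Cl H^1(M)$ \`a chaque morceau, en minorant le quotient de Rayleigh morceau par morceau (l'admissibilit\'e des restrictions sans condition de bord, due au caract\`ere neumannien du Min-Max sur chaque $M_i$, est bien le point cl\'e), puis en sommant les normes $L^2$. Rien \`a ajouter.
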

    
    \begin{prop}\label{prop:InvEigFunc}
    Soit $M$ une variété riemannienne (éventuellement à bord) invariante par un
    groupe d'isométries $G$. Si le bas du spectre de $M$ est atteint par une fonction $\phi_0\in L^2(M)$, alors $\phi_0$ est invariante par $G$.
    \end{prop}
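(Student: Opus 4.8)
Le plan est d'exploiter l'unicit\'e (\`a proportionnalit\'e pr\`es) de la premi\`ere fonction propre fournie par le Th\'eor\`eme~\ref{theo:EigFunc}, combin\'ee au fait que toute isom\'etrie commute avec le Laplacien. Puisque le bas du spectre est atteint par $\phi_0$, cette fonction minimise le quotient de Rayleigh et est donc, d'apr\`es ce th\'eor\`eme, une fonction propre pour $\lambda_0(M)$, de signe strictement constant et unique \`a un scalaire pr\`es.

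D'abord, je v\'erifierais que pour toute isom\'etrie $g\in G$, la fonction $\phi_0\circ g$ a le m\^eme quotient de Rayleigh que $\phi_0$. Comme $g$ pr\'eserve la m\'etrique, elle pr\'eserve le gradient et la mesure riemannienne, d'o\`u $\norm{\nabla(\phi_0\circ g)} = \norm{\nabla\phi_0}$ et $\norm{\phi_0\circ g} = \norm{\phi_0}$ ; de plus $g$ envoie $\bd M$ sur lui-m\^eme et pr\'eserve donc les conditions de Neumann, ainsi que l'appartenance \`a $\Cl H^2(M)$. D'apr\`es la Proposition~\ref{prop:MinMax}, $\phi_0\circ g$ r\'ealise encore $\lambda_0(M)$.

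Ensuite, le Th\'eor\`eme~\ref{theo:EigFunc} donne que $\phi_0\circ g$ est une fonction propre associ\'ee \`a $\lambda_0(M)$, donc proportionnelle \`a $\phi_0$ : il existe $c(g)\in\Bb R$ tel que $\phi_0\circ g = c(g)\,\phi_0$. La conservation de la norme $L^2$ impose $|c(g)| = 1$, et comme $\phi_0$ est de signe strictement constant (disons positive), $\phi_0\circ g$ l'est aussi, donc $c(g)>0$. On conclut $c(g) = 1$, soit $\phi_0\circ g = \phi_0$ pour tout $g\in G$ : la fonction $\phi_0$ est $G$-invariante.

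La d\'emonstration est courte une fois not\'ee la commutation du Laplacien avec les isom\'etries ; le seul point demandant un peu de soin est de s'assurer que $\phi_0\circ g$ appartient encore au domaine $\Cl H^2(M)$ et satisfait les conditions de Neumann, ce qui r\'esulte de ce que $g$ est une isom\'etrie pr\'eservant le bord. Le raisonnement s'applique identiquement avec conditions de Dirichlet ou en l'absence de bord.
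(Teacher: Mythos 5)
Votre d\'emonstration est correcte et suit exactement la voie que l'article pr\'evoit : celui-ci laisse la preuve au lecteur en indiquant qu'elle d\'ecoule du Min-Max (Proposition~\ref{prop:MinMax}) et du Th\'eor\`eme~\ref{theo:EigFunc}, et c'est pr\'ecis\'ement ce que vous faites (invariance du quotient de Rayleigh sous une isom\'etrie, puis unicit\'e \`a un scalaire pr\`es et signe strictement constant pour conclure $c(g)=1$). Rien \`a ajouter.
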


    \begin{defi}\label{def:Trou}
    Soit $M$ une variété riemannienne, nous dirons que $M$ a un \emph{trou spectral strictement positif} si les deux conditions suivantes sont vérifiées :
    \bi
    \item le bas du spectre $\lambda_0$ de son Laplacien est associé à une fonction propre $\phi_0\in \Cl H^1(M)$ ;\\
     on note alors
    $$\lambda_1 = \inf\left\{\frac{\norm{\nabla h}^2}{\norm{h}^2} ; \int_M\phi_0 h = 0\right\}$$
    o\`u $h$ parcourt l'ensemble des fonctions de $\Cl H^1(M)$ orthogonales à $\phi_0$ ;
    \item on a $\eta = \lambda_1-\lambda_0> 0$.
    \ei
    On appelle alors $\eta$ le \ind{trou spectral} de $M$.
    \end{defi}
    
    Lorsque $M$ est compacte (avec ou sans bord) le trou spectral de $M$ est donc strictement positif. On peut également exprimer cette définition à l'aide du \emph{spectre essentiel} :
 
 \begin{defi}\label{def:SpecEss}
 Soit $(M,g)$ une variété riemannienne et $\Delta$ son Laplacien. On dit que $\lambda\in\Bb R$ est dans le \indb{spectre}{essentiel} de $\Delta$ s'il existe une suite de fonctions $\psi_n\in\tilde{\Cl H^2}(M)$ toutes orthonormales pour le produit scalaire de $\tilde{\Cl H^2}$ telles que 
 $$\lim_{n\ra\infty}\norm{\Delta\psi_n - \lambda\psi_n}_{L^2(M)} = 0.$$
 On note $\lambda_0^{ess}(g)$ l'infimum du spectre essentiel de $\Delta$.
 \end{defi}

\begin{prop}
Les éléments du spectre qui ne sont pas dans le spectre essentiel sont des valeurs propres de multiplicité finie. En particulier, le trou spectral est strictement positif si et seulement si $$\lambda_0(g)<\lambda_0^{ess}(g).$$
\end{prop}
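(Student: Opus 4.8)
Le plan est de se ramener à la théorie spectrale générale des opérateurs auto-adjoints : le laplacien de Neumann $\Delta$ est un opérateur auto-adjoint positif non borné sur $L^2(M)$ de domaine $\Cl H^2(M)$, et l'on dispose donc du théorème spectral et du calcul fonctionnel. Je noterais $E(I)$ le projecteur spectral associé à un borélien $I\subset\Bb R$. On vérifie d'abord que la caractérisation de la Définition \ref{def:SpecEss} par suites de Weyl coïncide avec la notion usuelle de spectre essentiel (suites $L^2$-orthonormales), de sorte que tout repose sur le lemme de caractérisation ci-dessous.

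Lemme clé : $\lambda$ appartient au spectre essentiel de $\Delta$ si et seulement si, pour tout $\epsilon>0$, l'image de $E((\lambda-\epsilon,\lambda+\epsilon))$ est de dimension infinie. Pour le sens direct, si ces images sont toutes de dimension infinie, elles sont emboîtées (décroissantes en $\epsilon$), ce qui permet de construire par récurrence une suite orthonormale $\psi_n\in\mathrm{Im}\,E((\lambda-\frac1n,\lambda+\frac1n))$ orthogonale aux précédentes ; le calcul fonctionnel donne alors $\norm{(\Delta-\lambda)\psi_n}\leq\frac1n\ra 0$, soit une suite de Weyl. Réciproquement, si pour un $\epsilon>0$ le projecteur $P=E((\lambda-\epsilon,\lambda+\epsilon))$ est de rang fini, toute suite orthonormale $\psi_n$ vérifie $\psi_n\rightharpoonup 0$ faiblement, donc $P\psi_n\ra 0$ fortement par compacité de $P$ ; comme $(\Delta-\lambda)$ commute à $P$ et est minoré par $\epsilon$ en norme sur $\mathrm{Im}(1-P)$, on a $\norm{(1-P)\psi_n}\leq\frac1\epsilon\norm{(\Delta-\lambda)\psi_n}$, ce qui interdit à $\norm{(\Delta-\lambda)\psi_n}$ de tendre vers $0$ (sinon $\norm{\psi_n}\ra 0$). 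La première assertion en découle : si $\lambda$ est dans le spectre mais pas dans le spectre essentiel, il existe $\epsilon>0$ tel que $E((\lambda-\epsilon,\lambda+\epsilon))$ soit de rang fini ; la restriction de $\Delta$ à son image est auto-adjointe en dimension finie, de spectre fini formé de valeurs propres, et $\lambda$ en fait partie. Ainsi $\lambda$ est une valeur propre isolée de multiplicité finie.

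Pour la seconde assertion, je traiterais les deux implications séparément. Si $\lambda_0<\lambda_0^{ess}$, alors, $\lambda_0$ étant la borne inférieure du spectre, il appartient au spectre (fermé) mais pas au spectre essentiel ; par ce qui précède c'est une valeur propre isolée, donc la première fonction propre $\phi_0$ existe et le Théorème \ref{theo:EigFunc} en donne la multiplicité $1$. Comme $\lambda_0$ est isolé, le bas du spectre restreint à $\phi_0^\perp$ — c'est-à-dire $\lambda_1$ via le Min-Max de la Proposition \ref{prop:MinMax} — est strictement supérieur à $\lambda_0$, d'où $\eta>0$. Réciproquement, supposons le trou spectral strictement positif ; on a toujours $\lambda_0\leq\lambda_0^{ess}$, et je raisonnerais par l'absurde en supposant $\lambda_0=\lambda_0^{ess}$. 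Alors $\lambda_0$ est dans le spectre essentiel et admet une suite de Weyl $\psi_n$. En posant $h_n=\psi_n-\langle\psi_n,\phi_0\rangle\phi_0$ (avec $\phi_0$ normalisée), on obtient des fonctions orthogonales à $\phi_0$ avec $\norm{h_n}\ra 1$ et $(\Delta-\lambda_0)h_n=(\Delta-\lambda_0)\psi_n\ra 0$, car $\langle\psi_n,\phi_0\rangle\ra 0$ par convergence faible. Leur quotient de Rayleigh tend alors vers $\lambda_0$, ce qui contredit $\lambda_1>\lambda_0$ issu de la Définition \ref{def:Trou}.

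Le point délicat, et le cœur technique de la preuve, est le lemme de caractérisation : l'extraction orthonormale dans le sens direct et l'argument de compacité (convergence faible vers $0$ d'une suite orthonormale, compacité d'un projecteur de rang fini) dans le sens réciproque. Il faudra aussi veiller aux subtilités de domaine propres à un opérateur non borné : s'assurer que les suites de Weyl et les fonctions $h_n$ restent dans le bon espace de Sobolev, et surtout que l'orthonormalité pour le produit scalaire de $\tilde{\Cl H^2}$ figurant dans la Définition \ref{def:SpecEss} fournit bien une suite de Weyl au sens $L^2$ standard, qui est celle utilisée dans les arguments ci-dessus.
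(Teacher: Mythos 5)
Votre d\'emonstration est correcte, mais elle suit une voie bien plus autonome que celle de l'article, dont la preuve tient en deux phrases : la premi\`ere assertion y est pr\'esent\'ee comme un r\'esultat classique sur le spectre essentiel, avec un simple renvoi \`a \cite{Don81}, \S 2, et la seconde est dite d\'ecouler du fait que le bas du spectre ne peut \^etre une valeur propre que de multiplicit\'e $1$ d'apr\`es le Th\'eor\`eme \ref{theo:EigFunc}. Vous red\'emontrez au contraire le r\'esultat classique \`a partir du th\'eor\`eme spectral (caract\'erisation du spectre essentiel par le rang des projecteurs spectraux, construction d'une suite de Weyl dans un sens, compacit\'e d'un projecteur de rang fini dans l'autre), puis vous explicitez les deux implications de l'\'equivalence : le sens $\lambda_0<\lambda_0^{ess}\Rightarrow\eta>0$ via l'isolation de $\lambda_0$ et le Min-Max restreint \`a l'orthogonal de $\phi_0$, et la r\'eciproque via une suite de Weyl projet\'ee orthogonalement \`a $\phi_0$ --- implication que l'article laisse enti\`erement implicite. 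Votre version est donc v\'erifiable sans r\'ef\'erence externe et rend transparent le r\^ole exact du Th\'eor\`eme \ref{theo:EigFunc} (existence et multiplicit\'e $1$) ; celle de l'article ach\`ete la bri\`evet\'e au prix d'un renvoi \`a la litt\'erature. Le seul point qu'il vous resterait \`a r\'ediger --- et que vous signalez d'ailleurs vous-m\^eme --- est l'\'equivalence entre la D\'efinition \ref{def:SpecEss} (suites orthonormales pour le produit scalaire de $\tilde{\Cl H^2}$) et la notion usuelle de suite de Weyl $L^2$-orthonorm\'ee : elle est standard, mais elle est r\'eellement utilis\'ee dans votre argument, par exemple pour garantir que les normes $L^2$ des $\psi_n$ ne d\'eg\'en\`erent pas dans le raisonnement par l'absurde (sinon la conclusion $\norm{h_n}\ra 1$ tombe) ; comme toute votre preuve transite par la caract\'erisation $L^2$, cette v\'erification fait partie int\'egrante de l'argument et ne peut pas rester un simple renvoi.
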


La première affirmation est un résultat classique sur le spectre essentiel (voir \cite{Don81}, §2). La seconde découle du fait que le bas du spectre ne peut être une valeur propre que de multiplicité 1 d'après le Théorème \ref{theo:EigFunc}.
\begin{ex}
Avec notre définition, l'espace hyperbolique $\Bb H^n$ a un trou spectral nul : le bas du spectre est aussi son bas du spectre essentiel, égal à $\frac{(n-1)^2}{4}$. Il est démontré dans \cite{LaxPhi82} qu'une variété hyperbolique de dimension $n$ géométriquement finie non compacte a un trou spectral strictement positif si et seulement si $\lambda_0<\frac{(n-1)^2}{4}.$ C'est le cas par exemple des surfaces de Riemann dont le bord du coeur convexe a été suffisamment pincé, ou des variétés hyperboliques de dimension 3 géométriquement finie et acylindriques (voir \cite{CanMinTay99}).
\end{ex}
Par la suite, nous étudierons majoritairement des variétés non compactes, dont le trou spectral sera souvent supposé strictement positif. Nous verrons que c'est alors une hypothèse cruciale de nos démonstrations.

\subsection{Variétés à coins et revêtements}\label{sec:CoinsRevet}

Soit $M$ une variété de dimension $n$ à bord, nous dirons que $\bd M$ est \emph{$\Cl C^1$ par morceaux}, ou que $M$ est une \emph{variété à coins}, s'il existe un atlas $\Cl C^1$ de $M$ dans $\Bb R^n$ tel que l'image de tout ouvert de $M$ est un ouvert du cadran
$$\{(x_1,...,x_n,) ; x_1\geq 0,...,x_n\geq 0\}.$$ Le bord $\bd M$ est alors une réunion localement finie de variétés à coins de dimension $n-1$. Les points singuliers de $\bd M$ forment un ensemble de mesure nulle dans $\bd M$. On montre alors que les  formules d'intégrations de Stokes et Green, et toute la théorie spectrale vue à la section \ref{ssec:PreSpec}, restent valable dans le cas des variétés à coins. 
\pgh

Nous allons chercher par la suite à étudier le bas du spectre de revêtements riemanniens $p : M\ra N$. Nos démonstrations utiliserons un \emph{domaine fondamental} adapté à une fonction sur $N$ : nous présentons maintenant un mode de construction de tels domaines.

Soit $p : M\ra N$ un revêtement riemannien, et $\Gamma = \pi_1(N)/\pi_1(M)$ son groupe d'automorphisme. Rappelons qu'un ensemble fermé $D\subset M$ est un \ind{domaine fondamental} pour le revêtement $p$ si $p(D) = N$, donc $\Gamma(D) = M$, et pour tout $g\in G\bs \{id\}$, $p(\inter{D})\cap \inter{D} = \vd$. On a alors la propriété classique suivante :

\begin{prop}\label{prop:GeneGroupeRev}
Soit $p : M\ra N$ un revêtement galoisien dont le groupe d'automorphisme est de type fini. On suppose qu'il existe un domaine fondamental $D\subset M$ dont le bord est $\Cl C^1$ par morceaux. Alors le groupe $G$ est engendré par un nombre fini d'éléments $S_D\subset G$ tels que pour tout $g\in S_D$, $\bar D\cap g(\bar D)$ est de codimension 1. On peut de plus supposer $S_D$ symétrique.
\end{prop}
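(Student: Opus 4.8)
Le plan est d'identifier $S_D$ \`a une partie finie du syst\`eme de g\'en\'erateurs naturellement associ\'e au pavage de $M$ par les translat\'es $g(D)$, $g\in G$, puis d'en d\'eduire par connexit\'e qu'il engendre $G$. Posons $S = \{g\in G\bs\{id\} : \bar D\cap g\bar D \text{ est de codimension } 1\}$. Les ouverts $g(\inter D)$ sont deux \`a deux disjoints et $\bigcup_{g\in G} g(\bar D) = M$; comme $G$ est le groupe d'un rev\^etement, il agit de fa\c con proprement discontinue et ce pavage est localement fini. Remarquons d'embl\'ee que $S$ est sym\'etrique : les \'el\'ements de $G$ agissant par isom\'etries, $g^{-1}(\bar D\cap g\bar D) = g^{-1}\bar D\cap\bar D$ a la m\^eme codimension que $\bar D\cap g\bar D$, de sorte que $g\in S$ si et seulement si $g^{-1}\in S$.

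Montrons d'abord que $S$ tout entier engendre $G$. Soit $E\subset\bd D$ le lieu des coins de $D$, de codimension $\geq 2$ dans $M$, et $\Sigma = \bigcup_{g\in G} g(E)$. Par locale finitude du pavage, $\Sigma$ est ferm\'e et de codimension $\geq 2$, donc $M\bs\Sigma$ reste connexe. Fixons $x_0\in\inter D$ et, pour $g\in G$ donn\'e, $x_g\in g(\inter D)$. L'\'etape cl\'e consiste \`a relier $x_0$ \`a $x_g$ par un chemin $\gamma$ contenu dans $M\bs\Sigma$ et transverse \`a toutes les faces de codimension $1$ du pavage; par compacit\'e de $\gamma$ et locale finitude, $\gamma$ ne traverse qu'un nombre fini de ces faces. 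Si $D = g_0(D), g_1(D),\dots,g_m(D) = g(D)$ d\'esigne la suite des pav\'es successivement visit\'es, avec $g_0 = id$, deux pav\'es cons\'ecutifs partagent une face de codimension $1$; autrement dit $\bar D\cap\overline{g_i^{-1}g_{i+1}(D)}$ est de codimension $1$, donc $s_{i+1} := g_i^{-1}g_{i+1}\in S$. On obtient alors $g = s_1 s_2\cdots s_m$, ce qui prouve que $S$ engendre $G$.

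La finitude s'obtient en invoquant l'hypoth\`ese que $G$ est de type fini : soit $\{t_1,\dots,t_k\}$ une partie g\'en\'eratrice finie. D'apr\`es le point pr\'ec\'edent, chaque $t_j$ s'\'ecrit comme un mot fini en les \'el\'ements de $S$. La partie $S_D\subset S$ constitu\'ee des lettres apparaissant dans ces $k$ mots est alors finie et engendre $G$; quitte \`a lui adjoindre les inverses de ses \'el\'ements, qui appartiennent encore \`a $S$ par sym\'etrie, on la rend sym\'etrique sans perdre la finitude. Par construction, tout $g\in S_D$ v\'erifie que $\bar D\cap g\bar D$ est de codimension $1$.

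La principale difficult\'e sera la justification soigneuse des deux ingr\'edients g\'eom\'etriques : la locale finitude du pavage (d\'ecoulant du caract\`ere proprement discontinu de l'action de $G$) et l'existence du chemin $\gamma$ transverse, qu'il faut pouvoir perturber pour qu'il ne rencontre $\bd D$ qu'en des points int\'erieurs de faces de codimension $1$ --- en \'evitant le lieu des coins $\Sigma$ --- chaque franchissement correspondant alors au passage vers un unique pav\'e voisin. Le reste de l'argument (sym\'etrie et extraction d'une partie g\'en\'eratrice finie) est purement formel.
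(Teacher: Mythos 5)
Premi\`ere remarque : le papier ne d\'emontre pas cette proposition, qu'il \'enonce comme une <<propri\'et\'e classique>> sans r\'ef\'erence ni preuve ; il n'y a donc pas d'argument interne auquel comparer le v\^otre. Votre plan est bien l'argument classique (connexit\'e du pavage et appariement des faces), et toute sa partie alg\'ebrique est correcte : la sym\'etrie de $S$, le t\'elescopage $g = s_1\cdots s_m$, et l'extraction d'une partie g\'en\'eratrice finie $S_D\subset S$ gr\^ace \`a l'hypoth\`ese que le groupe est de type fini.

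Le point faible est pr\'ecis\'ement celui que vous expédiez d'une incise : la locale finitude du pavage $\{g(D)\}_{g\in G}$ ne d\'ecoule \emph{pas} de la discontinuit\'e propre de l'action. Celle-ci donne la finitude de $\{g\in G : gK\cap K\neq\vd\}$ pour $K$ \emph{compact} ; or les pav\'es $g(D)$ ne sont pas compacts d\`es que $N$ ne l'est pas, et l'ensemble $\{g : g(D)\cap K\neq\vd\}$ peut \^etre infini. La locale finitude \'equivaut ici \`a la propret\'e de la projection $p|_D : D\ra N$ (un pav\'e $g(D)$ rencontre $K$ si et seulement si $D$ rencontre $g^{-1}K\subset p^{-1}(p(K))$, et l'on conclut alors par discontinuit\'e propre appliqu\'ee aux compacts $K$ et $(p|_D)^{-1}(p(K))$) ; c'est une vraie restriction sur le domaine fondamental : il existe des exemples classiques (Beardon, \emph{The Geometry of Discrete Groups}, \S 9.2) de domaines fondamentaux d'actions proprement discontinues dont une infinit\'e de translat\'es s'accumulent au voisinage d'un point. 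Sans locale finitude, vos deux ingr\'edients g\'eom\'etriques tombent simultan\'ement : $\Sigma$ n'est plus n\'ecessairement ferm\'e (donc $M\bs\Sigma$ n'est ni ouvert ni, a priori, connexe), le chemin compact $\gamma$ peut rencontrer une infinit\'e de pav\'es, et m\^eme l'affirmation <<au franchissement on passe dans un unique pav\'e voisin le long d'une face commune>> utilise que, pr\`es du point de franchissement, seuls les deux pav\'es qui le contiennent suffisent \`a recouvrir un voisinage. Pour boucher le trou, il faut ou bien d\'emontrer que l'hypoth\`ese de bord $\Cl C^1$ par morceaux (au sens des vari\'et\'es \`a coins du texte) interdit de telles accumulations --- c'est plausible, les contre-exemples connus violant cette structure aux points d'accumulation, mais ce n'est nullement imm\'ediat --- ou bien ajouter explicitement la locale finitude du pavage comme hypoth\`ese ; notez que pour les domaines r\'eellement utilis\'es dans ce texte, construits au Th\'eor\`eme \ref{theo:DomFondStrat} \`a partir d'une stratification localement finie, cette propri\'et\'e est automatique. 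Une fois la locale finitude acquise, la perturbation de $\gamma$ (transversalit\'e aux faces et \'evitement du squelette de codimension $\geq 2$, qui est alors ferm\'e) est une affaire de routine et votre argument se compl\`ete sans difficult\'e.
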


Nos domaines fondamentaux seront désormais toujours supposés d'intérieur connexe. Pour tout revêtement d'une variété lisse, il existe un domaine fondamental pour l'action du groupe d'automorphisme, à bord $\Cl C^1$ par morceaux. Nous construisons un tel domaine en utilisant un outil classique de topologie différentielle : les fonctions de Morse.

\begin{defi}\label{def:Morse}
Soit $f : M\ra \Bb R$ une fonction de classe $\Cl C^2$, $f$ est une \indb{fonction}{de Morse} si et seulement si en tout point critique de $f$, le hessien $d^2f$ est de rang maximal.
\end{defi}


Les points critiques d'une fonction de Morse sont isolés, et plus généralement, son gradient est simple à étudier. Nous utiliserons ces fonctions par l'intermédiaire du résultat suivant :

\begin{theo}\label{theo:DomFondMorse}
Soit $(N,g)$ une variété riemannienne de dimension $n$, et $p : M\ra N$ un revêtement riemannien galoisien de groupe d'automorphisme $\Gamma$. S'il existe une fonction de Morse $f : N\ra \Bb R_+^*$, telle que pour tout $a>0$, 
$$N_a = \left\{x\in N ; f(x)\geq a\right\}$$
soit compact, alors il existe un domaine fondamental $C\subset M$ pour l'action de $\Gamma$ à bord $\Cl C^1$ par morceaux et d'intérieur connexe tel que le gradient du relevé de $f$ à $M$ soit tangent à $\bd C$. De plus, si $f$ admet un nombre fini de point critiques, pour tout $1\leq k\leq n,$ le bord de $C$ a un nombre fini de composantes lisses de dimension $k$.
\end{theo}

Ce théorème est un cas particulier d'un résultat plus général, le Théorème \ref{theo:DomFondStrat} que nous démontrons ci-dessous. Le domaine fondamental ainsi construit est donc stable sous le flot de gradient de $f$. De façon équivalente, la fonction $f$ vérifie les conditions de Neumann sur $\bd C$. Il existe toujours une fonction vérifiant les hypothèses du théorème précédent, par un résultat classique en Théorie de Morse :

\begin{theo}
Sur toute variété différentiable $N$ il existe une fonction de Morse $f : N\ra \Bb R^*_+$ telle que pour tout $a>0$, $N_a = \left\{x\in N ; f(x)\geq a\right\}$ soit compact.
\end{theo}

Il s'agit du Corollaire 6.7 de \cite{Mil63}, dont la démonstration est détaillée tout au long du Chapitre 6 du livre de J. Milnor. Le Théorème \ref{theo:DomFondMorse} associe donc à toute fonction de Morse un domaine fondamental qui lui est adpaté. Pour associer un tel domaine à une fonction $f$, la propriété de Morse n'est pas nécessaire : il suffit qu'il existe une partition localement finie de la variété de départ en morceaux simplement connexes, stable par le flot de gradient de $f$.

\begin{defi}\label{def:GradStrat}
Soit $(N,g)$ une variété riemannienne et $f : N\ra \Bb R$ une fonction de classe $\Cl C^1$. Nous dirons que $f$ a un \ind{gradient simplement stratifié} si $N$ se décompose en $N = \coprod_i N_i,$
où les $N_i$ sont des \emph{sous-variétés ouvertes disjointes} de classe $\Cl C^1$, \emph{simplement connexes} et \emph{stables par le flot de gradient} de $f$ associé à $g$, et où la partition $(N_i)_i$ est \emph{localement finie}.
\end{defi}

Une partition localement finie d'un ensemble topologique en variétés s'appelle une \ind{stratification} (voir par exemple \cite{GorMac88} p. 37), ce qui justifie notre terminologie. Les éléments $N_i$ de la partition s'appellent des \ind{strates} ; l'adhérence d'une strate est une variété à bord $\Cl C^1$ par morceaux.

\begin{theo}\label{theo:DomFondStrat}
Soit $(N,g)$ une variété riemannienne, et $p : M\ra N$ un revêtement riemannien galoisien de groupe d'automorphisme $\Gamma$. Soit $f : N\ra \Bb R$ une fonction dont le gradient est simplement stratifié, alors il existe un domaine fondamental $C\subset M$ pour l'action de $\Gamma$ à bord $\Cl C^1$ par morceaux et d'intérieur connexe tel que $\nabla f$ soit tangent à $\bd C$ en dehors des points singuliers.
\end{theo}

\begin{proof}
Soit $f : N\ra \Bb R$ une fonction dont le gradient est simplement stratifié, et $N = \coprod_i N_i$ une stratification associée. Soit $N_1\subset N$ une strate de codimension $0$, comme $N_1$ est simplement connexe, $p$ est trivial au dessus de $N_1$ : les composantes connexes de $p^{-1}(N_1)$ sont toutes difféomorphes à $N_1$. Nous appelons chacune de ces composante connexe un \emph{relevé} de $N_1$ par $p$. Soit $N'_1$ l'un de ces relevés. Le bord de $N'_1$ est constitué d'une union localement finie de strates de codimension au moins $1$. Si $\bar{N_1} = N$, on pose $D = \bar{N'_1}$ : c'est un domaine fondamental pour l'action de $\Gamma$ à bord $\Cl C^1$ par morceaux tel que $\nabla f$ est tangent à $\bd C$ en dehors des points singuliers. Si $\bar{N_1}\neq N$, comme la stratification est localement finie, il existe $N_2\neq N_1$ une autre strate de codimension $0$ telle que $\bar{N_1}$ et $\bar N_2$ contiennent une strate commune $N_{12}$ de codimension $1$. Celle-ci se trouve alors totalement incluse dans l'intérieur de $\bar{N_1\cup N_2}$, qui est donc connexe. Soit $N'_2$ un relevé de $N_2$ tel que l'intérieur de $\bar{N'_1\cup N'_2}$ contiennent un relevé de $N_{12}$. Si $\bar{N_1\cup N_2} = N$, alors $D=\bar{N'_1\cup N'_2}$ est un domaine fondamental vérifiant la conclusion du théorème.  Sinon, il existe une strate $N_3$ distincte de $N_1$ et $N_2$ telle que $\bar{N_1\cup N_2}$ et $\bar{N_3}$ contiennent une strate commune $N_{23}$, ce qui nous permet de reproduire le raisonnement précédent. Les strates de codimension $0$ étant dénombrables, en répétant ce procédé un nombre dénombrable de fois, on obtient un domaine fondamental vérifiant les propriétés voulues.
\end{proof}

Le Théorème \ref{theo:DomFondMorse} découle du théorème précédent d'après le résultat suivant :

\begin{theo}[Thom]
Soit $(N,g)$ une variété riemannienne, et $f : N\ra \Bb R_+^*$ une fonction de Morse telle que pour tout $a>0$, 
$$N_a = \left\{x\in N ; f(x)\geq a\right\}$$
soit compact. Alors le gradient de $f$ est simplement stratifié.
\end{theo}

\begin{proof}
Soit $(N,g)$ une variété riemannienne et $f$ une telle fonction de Morse. Les points critiques de $f$ sont isolés, et on montre que pour tout point critique $\alpha$ de $f$ d'indice $r$, la variété stable de $\alpha$ est difféomorphe à $\Bb R^{n-r}$ (ce résultat, dû à Thom, est démontré par exemple dans \cite{AbRob67}, p87). De plus, par hypothèse, pour tout $a>0$, 
$$N_a = \left\{x\in N ; f(x)\geq a\right\}$$
est compact : cela implique que tout point appartient à une (et une seule) variété stable et que l'ensemble de ces variétés stables est localement fini. Il forme donc une stratification de $M$ en sous-variétés simplement connexes.
\end{proof}

La Figure \ref{fig:DomMorse} illustre le théorème \ref{theo:DomFondStrat} dans le cas où la fonction utilisée est de Morse.

\begin{center}\begin{figure}\label{fig:DomMorse}
\includegraphics[width = \textwidth]{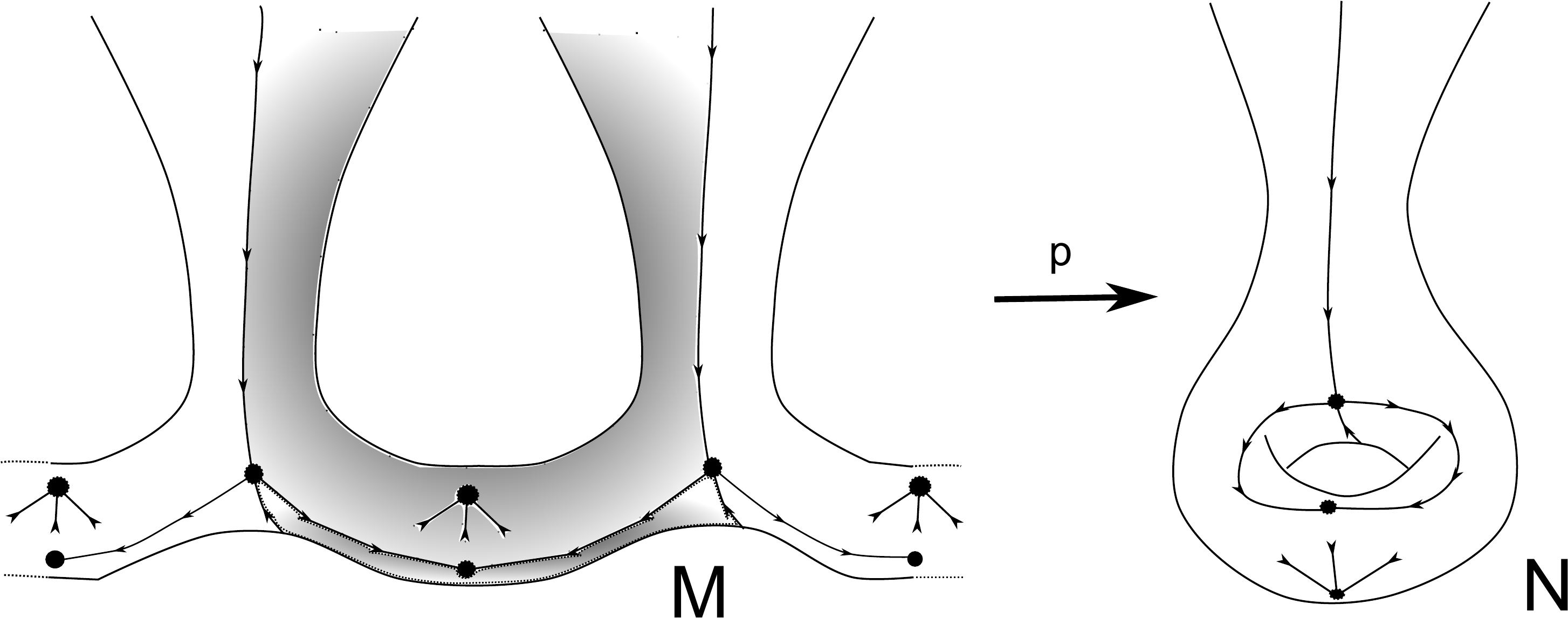}
\caption{\emph{Domaine fondamental et variétés stables :} on a représenté les points critiques de la fonction et leurs variétés stables. En grisé, le domaine fondamental correspondant à la variété stable d'un maximum local}
\end{figure}\end{center}

On peut espérer que des classes plus larges de fonctions vérifient cette propriété de stratification du gradient. Nous disons qu'une variété riemannienne est analytique lorsqu'elle est munie d'un atlas dont les changements de cartes sont analytiques et lorsque sa métrique est une fonction analytique pour la structure définie par cet atlas. On peut construire des fonctions $\Cl C^\infty$ dont le gradient n'est pas simplement stratifié. La question suivante reste à ma connaissance ouverte :
\begin{quest}
Soit $(N,g)$ une variété analytique compacte et $f : M\ra \Bb R$ une fonction analytique. Son gradient est-il simplement stratifié ?
\end{quest}
Nous serons confrontés naturellement par la suite à la question similaire suivante :
\begin{quest}
Soit $(N,g)$ une variété riemannienne $\Cl C^\infty$ dont le trou spectral est strictement positif, et $\phi_0$ sa première fonction propre. Le gradient de $\phi_0$ est-il toujours simplement stratifié ?
\end{quest}

Nous montrons dans \cite{Tap09b} que génériquement, la première fonction propre d'une variété à trou spectral strictement positif est de Morse : son gradient est donc simplement stratifié. Le cas général reste à faire. Nous reviendrions sur ce résultat au Paragraphe \ref{ssec:Brooks}.

\subsection{Hypersurfaces  et théorèmes de comparaison}\label{sec:PrelimGeom}

        Pour établir nos principaux résultats, nous nous placerons au voisinage d'une hypersurface qui sépare deux ouverts. Dans ce paragraphe, nous décrivons les propriétés géométriques et spectrales de ces voisinages dont nous aurons besoin. 
                
        Soit $M$ une variété et $H\subset M$ une hypersurface lisse (éventuellement à bord). On suppose que l'application \ind{exponentielle normale}
        $$\Phi : \left\{\begin{array}{ccc}
                        H\cx[-R,R] & \ra & M\\
                        (x,r)& \mapsto & exp_x(r\nu_x)\end{array}\right.$$
        est un difféomorphisme, où on a noté $\nu_x$ la normale à $H$ en $x$. On dit alors que le \ind{rayon d'injectivité normale} de $H$ est supérieur à $R$, ou que $H$ admet un \ind{voisinage tubulaire} de largeur $R$ dans $M$. On appelle alors $\Phi^*g$  l'écriture de $g$ en \ind{coordonnées de Fermi} au voisinage de $H$. Par exemple, une métrique à courbure sectionnelle constante $-\kappa<0$ au voisinage d'une hypersurface totalement géodésique $H$ s'écrit en coordonnées de Fermi :
        $$\Phi^*g_{\Bb H}(x,r) = (\ch \sqrt{\kappa}r)^2g_H(x)\oplus dr^2,$$
où $g_H$ est la métrique induite sur $H$. La forme volume de $g$ s'écrit alors :
        $$dV_{\Phi^*g}(x,r) = \theta_\kappa^{n-1}(r)dV_H(x)dr,$$ où nous avons noté $\theta_\kappa(r) = \ch (\sqrt \kappa r).$

        De même, soit $(M,g)$ une variété riemannienne quelconque et $H$ une hypersurface ayant un voisinage tubulaire $H_R$ de largeur $R$ dans $M$ :  la forme volume de $g$ s'écrit alors en coordonnées de Fermi sur $H_R$ : $$dV_g(x,r) = \theta^{n-1}(x,r)dV_H(x)dr.$$

Lors de notre démonstration principale, nous étudierons des fonctions définies sur un tel tube $H_R$ ne dépendant que de la distance à $H$. Nous avons alors les résultats suivants :
	
        \begin{prop}[Laplacien d'une fonction radiale]\label{prop:LapRad}
        Soit $(M,g)$ une variété riemannienne et $H$ une hypersurface \emph{compacte} ayant un voisinage tubulaire $H_R$ de largeur $R$ dans $M$. Soit $f= u\circ r : H_R\ra \Bb R$ une fonction ne dépendant que de la distance $r$ à $H$. On a pour tout $r>0$, $$\Delta f = - u''(r)-(n-1)\frac{\theta'(x,r)}{\theta(x,r)}u'(r).$$
        \end{prop}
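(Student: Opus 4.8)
Le Laplacien d'une fonction radiale $f = u \circ r$ sur un voisinage tubulaire $H_R$ est
$$\Delta f = -u''(r) - (n-1)\frac{\theta'(x,r)}{\theta(x,r)} u'(r).$$

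Le plan est d'exprimer le Laplacien (positif) en coordonn\'ees de Fermi et d'exploiter la structure particuli\`ere de la m\'etrique au voisinage de $H$. Le point g\'eom\'etrique essentiel, qui est un avatar du lemme de Gauss, est que dans ces coordonn\'ees la m\'etrique se scinde orthogonalement : les g\'eod\'esiques radiales $r\mapsto \exp_x(r\nu_x)$ sont orthogonales aux hypersurfaces de niveau $\{r = \mathrm{cste}\}$, de sorte que $\Phi^*g = g_r\oplus dr^2$, o\`u $g_r$ est la m\'etrique induite sur l'hypersurface \`a distance $r$. En particulier le coefficient $g^{rr}$ vaut $1$ et les termes crois\'es $g^{ra}$ sont nuls.

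Premi\`erement, je calculerais le gradient. Comme $f = u\circ r$ ne d\'epend que de $r$, le scindage pr\'ec\'edent montre que son gradient est purement radial : $\grad f = u'(r)\,\partial_r$, o\`u $\partial_r$ est le champ normal unitaire. Deuxi\`emement, j'appliquerais l'expression en coordonn\'ees locales du Laplacien positif sous forme divergence,
$$\Delta f = -\frac{1}{\sqrt{|g|}}\,\partial_r\!\left(\sqrt{|g|}\,g^{rr}\,\partial_r f\right),$$
les autres termes disparaissant puisque $f$ et $\grad f$ ne d\'ependent que de $r$. Or la forme volume donn\'ee plus haut s'\'ecrit $\sqrt{|g|} = \theta^{n-1}(x,r)\,\sqrt{|g_H(x)|}$ ; comme $\sqrt{|g_H|}$ ne d\'epend pas de $r$, il se simplifie et il reste
$$\Delta f = -\frac{1}{\theta^{n-1}}\,\partial_r\!\left(\theta^{n-1}u'(r)\right).$$
En d\'eveloppant la d\'eriv\'ee, on obtient directement $-u''(r) - (n-1)\frac{\theta'}{\theta}u'(r)$, ce qui conclut.

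La principale difficult\'e, ou du moins le seul point non purement calculatoire, r\'eside dans l'\'etape g\'eom\'etrique : justifier le scindage orthogonal de la m\'etrique (et donc $g^{rr}=1$, $g^{ra}=0$) qui rend le gradient radial. La compacit\'e de $H$ n'intervient pas dans le calcul ponctuel lui-m\^eme ; elle assure seulement que le voisinage tubulaire et les coordonn\'ees de Fermi sont globalement bien d\'efinis sur $H_R$. On pourrait aussi mener le calcul de fa\c{c}on plus intrins\`eque via $\Delta f = -\Trace(\nabla^2 f)$ dans un rep\`ere orthonorm\'e adapt\'e au feuilletage : le terme normal-normal donne $u''$ (car $\nabla_{\partial_r}\partial_r = 0$ le long d'une g\'eod\'esique radiale), tandis que la somme des termes tangentiels donne $u'\,H$, o\`u $H = (n-1)\theta'/\theta$ est la courbure moyenne des hypersurfaces de niveau, elle-m\^eme \'egale \`a la d\'eriv\'ee logarithmique de la densit\'e d'aire $\theta^{n-1}$ ; on retrouve alors la m\^eme formule.
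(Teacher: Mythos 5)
Your proof is correct and takes essentially the same route as the paper: both arguments come down to the fact that the radial logarithmic derivative of the volume density $\theta^{n-1}$ equals $(n-1)\theta'/\theta = -\Delta r$, i.e.\ the mean curvature of the level hypersurfaces of $r$. The only difference is one of packaging: the paper cites Gallot--Hulin--Lafontaine (p.~216) for the identity $\Delta r = -(n-1)\frac{\theta'}{\theta}$ and concludes in one line via $\Delta(u\circ r) = -u''(r) + \Delta(r)\,u'(r)$, whereas you re-derive that identity in a self-contained way from the divergence form of the Laplacian in Fermi coordinates, making explicit the role of the Gauss lemma ($g^{rr}=1$, no cross terms) that the citation leaves implicit.
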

        
 \begin{proof}
D'après \cite{GHL04} p216, on a $$\Delta r = -(n-1)\frac{\theta'}{\theta},$$ en notant $\theta' = \frac{\bd\theta}{\bd r}.$ On a alors pour toute fonction $u : \Bb R\ra \Bb R$, $$\Delta(u\circ r) = -\gdiv( u'(r)\nabla r) = -u''(r)+\Delta(r) u'(r).$$
\end{proof}

Dans le cas où $M$ est hyperbolique et $H$ totalement géodésique, on retrouve l'expression connue du Laplacien pour une fonction radiale en coordonnées de Fermi :
$$\Delta f = -f''(r) -(n-1)\Th r f'(r).$$

Nous aurons besoin du corollaire suivant :

        \begin{coro}\label{coro:MinLap}
Avec les notations de la proposition précédente, soit $\theta_{\inf} : H_R\ra \Bb R_+^*$ une fonction ne dépendant que de $r$, valant $1$ pour $r = 0$ et telle que $$r\fa \frac{\theta_{\inf}(r)}{\theta(x,r)}$$ soit décroissante pour tout $x\in H$. Pour toute fonction radiale $f = u\circ r$, avec $u$ \emph{décroissante}, et pour tous $(x,r)\in H_R,$ on a alors
    $$\Delta f \geq - u''(r)-(n-1)\frac{\theta_{\inf}'}{\theta_{\inf}}(r)u'(r) =:\Delta_{\inf} f.$$
        \end{coro}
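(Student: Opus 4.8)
The plan is to reduce the inequality to a pointwise comparison of the two logarithmic derivatives $\theta'/\theta$ and $\theta_{\inf}'/\theta_{\inf}$, exploiting that the only difference between $\Delta f$ and $\Delta_{\inf} f$ lies in the first-order term. First I would apply Proposition \ref{prop:LapRad}, which expresses the Laplacian of the radial function $f = u\circ r$ as $\Delta f = -u''(r) - (n-1)\frac{\theta'(x,r)}{\theta(x,r)}u'(r)$. Subtracting the definition of $\Delta_{\inf} f$, the second-order terms cancel and I am left with
$$\Delta f - \Delta_{\inf} f = (n-1)\,u'(r)\left(\frac{\theta_{\inf}'(r)}{\theta_{\inf}(r)} - \frac{\theta'(x,r)}{\theta(x,r)}\right).$$

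The sign of this quantity is controlled by three factors. Since $n\geq 2$ we have $n-1>0$, and since $u$ is decreasing, $u'(r)\leq 0$; hence the product is nonnegative precisely when the bracket is nonpositive, i.e. when $\frac{\theta_{\inf}'}{\theta_{\inf}}\leq\frac{\theta'}{\theta}$. So everything comes down to verifying this last inequality, and this is where I would use the monotonicity hypothesis. The map $r\mapsto \theta_{\inf}(r)/\theta(x,r)$ is assumed decreasing for every fixed $x$, so its $r$-derivative is nonpositive:
$$\frac{\theta_{\inf}'(r)\theta(x,r) - \theta_{\inf}(r)\theta'(x,r)}{\theta(x,r)^2}\leq 0.$$
Since $\theta$ and $\theta_{\inf}$ are strictly positive, I may divide by $\theta_{\inf}\theta>0$ without changing the sign, which yields exactly $\frac{\theta_{\inf}'}{\theta_{\inf}}\leq\frac{\theta'}{\theta}$. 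Feeding this back into the displayed identity gives $\Delta f - \Delta_{\inf} f\geq 0$ for all $(x,r)\in H_R$, as claimed.

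I do not expect a genuine obstacle here: the statement is essentially an algebraic manipulation on top of Proposition \ref{prop:LapRad}, and the hypotheses ($u$ decreasing, $\theta_{\inf}/\theta$ decreasing, $\theta_{\inf}(0)=1$) are arranged so that the sign conventions line up. The only point requiring care is the bookkeeping of signs: the Laplacian is taken with the geometer's (positive) convention, so the decreasing hypothesis on $u$ produces $u'\leq 0$ and makes the inequality flip into the correct direction; conflating this with the analyst's convention would reverse the conclusion. The normalization $\theta_{\inf}(0)=1$ plays no role in the inequality itself but guarantees that $\theta_{\inf}$ is a genuine comparison profile matching $\theta$ along $H$.
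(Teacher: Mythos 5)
Your proof is correct and follows essentially the same route as the paper: both reduce the claim to the pointwise inequality $\frac{\theta_{\inf}'}{\theta_{\inf}}\leq\frac{\theta'}{\theta}$, obtained by differentiating the decreasing quotient $r\mapsto\theta_{\inf}(r)/\theta(x,r)$, and then conclude via Proposition \ref{prop:LapRad} together with $u'\leq 0$. Your version merely spells out the quotient-rule computation and the sign bookkeeping that the paper leaves implicit.
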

        
        \begin{proof}
Pour tout $x\in H$, on a $\theta(x,0) = \theta_{\inf}(0) = 1$ et $\frac{\theta_{\inf}}{\theta(x,.)}$ est décroissante. On a donc pour tout $(x,r)\in H_R$,
$$\frac{\theta'}{\theta}\geq \frac{\theta_{\inf}'}{\theta_{\inf}}.$$ La proposition précédente conclut notre preuve puisque $u'(r)<0$ pour tout $r>0$.
        \end{proof}
        
\begin{rema}\label{rem:ThetaInf}
Lorsque $H$ est compacte, une telle fonction existe toujours. Il suffit de poser
$$\theta_{\inf}(r) = \exp\left(\int_0^r\inf_{x\in H} \frac{\theta'}{\theta}(x,s) ds\right).$$
On a alors pour tout $(x,r)\in H_R$,
$$\frac{\theta_{\inf}'}{\theta_{\inf}}(r) = (\log\theta_{\inf})'(r) = \inf_{y\in H} \frac{\theta'}{\theta}(y,r)\leq \frac{\theta'}{\theta}(x,r).$$
Lorsque $\theta$ est radial, par exemple lorsque la courbure sectionnelle de $M$ et la courbure moyenne de $H$ sont constantes, on a alors $\theta_{\inf} = \theta$.
\end{rema}

	Notons $V_H(r)$ l'aire de l'hypersurface parallèle à $H$ à distance $r$ :
	$$V_H(r) = \int_H \theta^{n-1}(x,r)dV_H(x).$$
	Nous aurons également besoin par la suite de contrôler une autre quantité géométrique : les \emph{oscillations} de l'élement de volume.
	
	\begin{defi}\label{def:VarVol}
	 Soit $H$ une hypersurface totalement géodésique d'une variété $M$ admettant un voisinage tubulaire $T$ de largeur $R$. Soit $$dV_T = \theta^{n-1}(x,r)dV_H(x)dr$$ son élément de volume. Nous appellerons \ind{fonction d'oscillation} de $\theta$ la fonction
	$$\beta_H(x,r) = \frac{(\theta^{n-1})'(x,r)}{\theta^{n-1}(x,r)} - \frac{V'_H(r)}{V_H(r)}= \frac{(\theta^{n-1})'(x,r)}{\theta^{n-1}(x,r)} - \frac{\int_H (\theta^{n-1})'(x,r)dV_H(x)}{\int_H \theta^{n-1}(x,r)dV_H(x)}.$$
	\end{defi}
	
	Lorsque $\theta$ ne dépend que de $r$, par exemple si $M$ est à courbure sectionnelle constante et si la courbure moyenne de $H$ est constante, on a alors $\beta_H\equiv 0$. En général, pour $H$ compacte, il existe une constante $\Kappa>0$ telle que sur $H_R$, $$|\beta_H|\leq \Kappa.$$
	
	        \subsection{Graphes et moyennabilité}\label{sec:PrelimMoy}
        
Nous présentons maintenant les notions de combinatoire sur les graphes dont nous aurons besoin, pour énoncer et démontrer nos résultats.

	\begin{defi}\label{def:GroupeMoy}
	Soit $G$ un groupe, on dit que $G$ est \ind{moyennable} si et seulement si il admet une \emph{moyenne invariante à gauche}, c'est-à-dire une fonction $m : \Cl P(G)\ra [0,1]$ telle que :
	\bi 
	\item $m(G) = 1 ;$
	\item pour toutes parties $G_1$ et $G_2$ disjointes de $G$, $$m(G_1\cup G_2) = m(G_1)+m(G_2) ;$$
	\item $\forall A\subset G, \forall g\in G, m(gA) = m(A).$
	\ei 
	\end{defi}
Un groupe fini est évidemment moyennable : il suffit d'utiliser la moyenne arithmétique classique. Soit $G = (V,E)$ un graphe. Nous noterons (abusivement) $i\in G$ pour un sommet $i\in V$ de $G$ et $i\sim j$ pour une arête $(i,j)\in E$. 

        \begin{defi}\label{def:GrapheMoy}
 	On appelle \ind{constante de Cheeger} de $G$ la constante
        $$\g h(G) = \inf_{G_f\subset G} \frac{\#\bd G_f}{\#G_f},$$ o\`u
        $G_f$  parcourt l'ensemble des parties finies de $G$ et $\bd G_f$ est  l'ensemble des points de $G_f$ reliés à un point de $G\bs G_f$. Un graphe $G$ est \ind{moyennable} si et seulement si $\g h(G) = 0$
        \end{defi}

	Cette dernière terminologie vient du résultat classique suivant :

        \begin{theo}[F\o lner]
        Soit $\Gamma,S$ un groupe de type fini, $\Gamma$ est moyennable au sens de la Définition \ref{def:GroupeMoy}
        si et seulement si le graphe de Cayley de $\Gamma$
        relativement à tout système fini de générateurs $S$ est moyennable au sens de la Définition \ref{def:GrapheMoy};
        \end{theo}
        On peut par exemple trouver la démonstration de ce théorème dû à F\o lner dans \cite{Broo81CMH}.

\pgh
        Comme cas particuliers des graphes moyennables, citons les graphes finis et les graphes à croissance polynomiale, parmi lesquels les graphes abéliens de type $\Bb Z^n$, et comme exemples de graphes non moyennables, les arbres de valence $\geq 3$. Notons qu'il existe des graphes à croissance exponentielle qui restent moyennables. 

	Sur un graphe $G$, nous appellerons  \ind{Laplacien combinatoire} l'opérateur $\Delta_G$ défini par $$(\Delta_G f)(i) = \sum_{i\sim j} (f(i) - f(j)).$$
        C'est un opérateur autoadjoint positif, dont le bas du spectre $\mu_0(G)$ est l'infimum des quotients de Rayleigh combinatoires :
        $$\mu_0(G) = \inf_{f}\frac{\sum_{i\sim j}\left(f(i)-f(j)\right)^2}{\sum_i f(i)^2}$$
où $f$ parcours l'ensemble des fonctions à support compact dans $G$. Le théorème suivant relie la moyennabilité du graphe au bas du spectre de cet opérateur :
\begin{theo}[Inégalités de Cheeger combinatoires]\label{theo:CheegComb}
Pour tout graphe $G$ de valence constante $v$, on a
        $$\frac{1}{2v}\g h(G)^2\leq \mu_0(G)\leq \g h(G).$$
        \end{theo}
En particulier, le graphe est moyennable si et seulement si le bas du spectre de son Laplacien combinatoire est
        nulle. On peut consulter \cite{Colin98} p31 pour une preuve de ce résultat.	

\section{Définitions, exemples et résultats}

        \subsection{Variétés $G$-périodiques}\label{ssec:GPeriod}
        
Soit $G = (V,E)$ un \ind{graphe}, où $V$ désigne l'ensemble des sommets de $G$ et $E\subset V\cx V$ l'ensemble des arêtes. Pour $i,j\in V$, nous noterons en général $i\sim j$ au lieu de $(i,j)\in E$, et $i\in G$ au lieu de $i\in V$.
        
        \begin{defi}\label{def:GPeriod}        
        Soit $G$ un graphe à valence constante $v<\infty$. Nous dirons que $M$ est une \indb{variété}{$G$-périodique} s'il existe une 							sous-variété à bord $\Cl C^1$ par morceaux $C\subset M$ fermée d'intérieur non vide, qui vérifie les propriétés suivantes.
        \bi
        \item Il existe des sous-variétés $C_i\subset M$ d'intérieurs disjoints telles que $$ M = \bigcup_{i\in G} C_i,$$
        et pour tout $i\in G$, il existe une isométrie $J_i : C_i\ra C$.
        \item Pour tous $i,j\in G, i\neq j$, si $i\sim j$ alors le bord commun $C_i\cap C_j$ contient une sous-variété de codimension $1$.
        \item Il existe $R>0$ et une famille d'ouverts $\alpha_1,...,\alpha_v\subset \bd C$, pas forcément connexes, d'adhérences compacte dans $\bd C$, de classe $\Cl C^1$, possédant des voisinages tubulaires $T_1,...,T_v$ de rayon $R$ disjoints dans $C$, tels que si $i\sim j$, on ait les propriétés :
        \bi
        	\item la composante de bord $J_i(C_i\cap C_j)\subset \bd C$ contient l'un des $\alpha_k$ ; de même $J_j(C_i\cap C_j)$ contient $\alpha_{k'}$ ;
        	\item l'application $$J_i\circ J_j^{-1} : J_j(C_i\cap C_j)\ra J_i(C_i\cap C_j)$$ induit une isométrie de $\alpha_{k'}$ sur $\alpha_k$.
        \ei
        \ei
        Nous appelons $C$ la \ind{cellule} élémentaire de $M$, $G$ le \indb{graphe}{modèle} de $M$, et les $\alpha_k$ les \ind{zones de transition}.
        \end{defi}

Une cellule de valence $v$ est donc la donnée d'une variété à bord $\Cl C^1$ par morceaux, et de $v$ zones de transition. Notons que lorsque $\bd C$ est compact, les deux premières hypothèses de la définition impliquent les suivantes. Lorsque $\bd C$ est non compact, il est possible d'avoir un découpage d'une variété $M$ en une famille de variétés à bord toutes isométriques entre elles, mais qu'il n'existe aucun graphe $G$ localement fini pour lequel $M$ soit $G$-périodique. L'expression \emph{<<Soit $C$ une cellule>>} signifie donc que sont fixés :
\bi
\item une variété $C$ à bord $\Cl C^1$ par morceaux ;
\item une valence $v>0$ et des composantes de bords compactes disjointes $\alpha_1,...,\alpha_v\subset \bd C$ qui admettent des voisinages tubulaires $T_1,...,T_k\subset C$ de largeur $R>0$ disjoints.
\ei
        
                Un exemple simple d'une telle variété est le pavage régulier du plan : dans ce cas, $C$ est un carré, et $G = \Bb Z^2$ :
                
\begin{center}
\includegraphics[width = 0.5\textwidth]{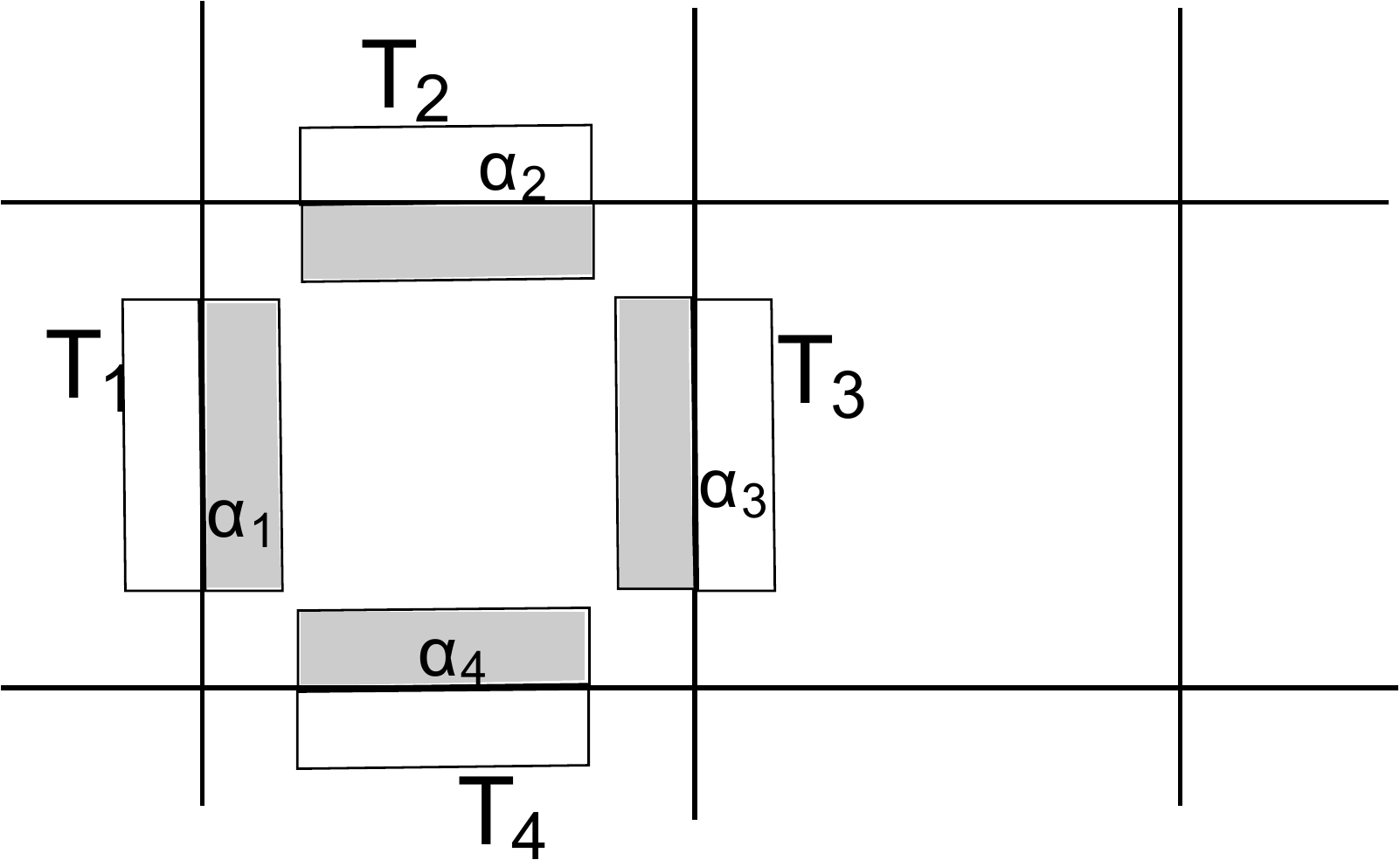}

\emph{Cellule et zones de transition pour le pavage régulier du plan}
\end{center}

Ces variétés sont une généralisation naturelle des \emph{variétés périodiques} usuelles, modelées en général sur $\Bb Z^n$. Le groupe fondamental d'une variété $G$-périodique n'est pas de type fini dès que le graphe $G$ est infini et le groupe fondamental de la cellule relativement au bord $\pi_1(C)/\pi_1(\bd C)$ est non trivial. La cellule élementaire d'une variété $G$-périodique n'est jamais unique ; nous verrons par la suite que la question clé pour obtenir des résultats spectraux est souvent de \emph{choisir une cellule adaptée} au problème considéré.

\pgh
On peut obtenir une vaste famille d'exemples en recollant des surfaces hyperboliques le long de géodésiques. Soit $C$ une surface hyperbolique dont le bord est la réunion de $v$ géodésiques fermées, toutes de même longueur. Pour tout graphe $G$ de valence $v$, il existe une surface $M$ modelée sur $G$ à partir de $C$, construite par recollement de copies de $C$ le long de leurs bords. La métrique hyperbolique ainsi obtenue est lisse (voir \cite{BePe92}, chapitre B). Les géodésiques de bord forment ici les zones de transition, et elles admettent un voisinage tubulaire dont une largeur minimale est donnée par le Lemme du Collier (voir par exemple \cite{Colb85}). On obtient en particulier de nombreuses surfaces hyperboliques topologiquement infinies, qui généralisent les surfaces périodiques. La Figure \ref{fig:SurfGraphes} présente deux exemples de cette sorte.

\begin{center}\begin{figure}
\includegraphics[width = 0.5\textwidth]{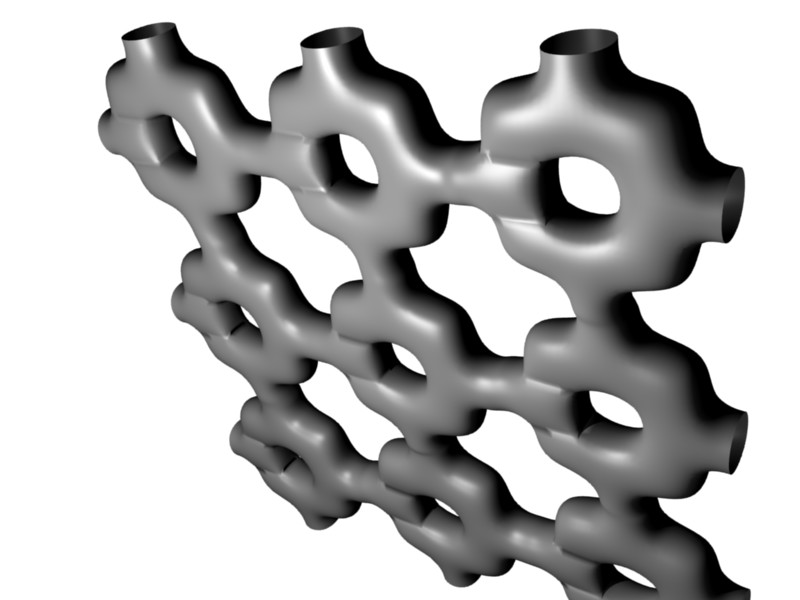}\includegraphics[width = 0.5\textwidth]{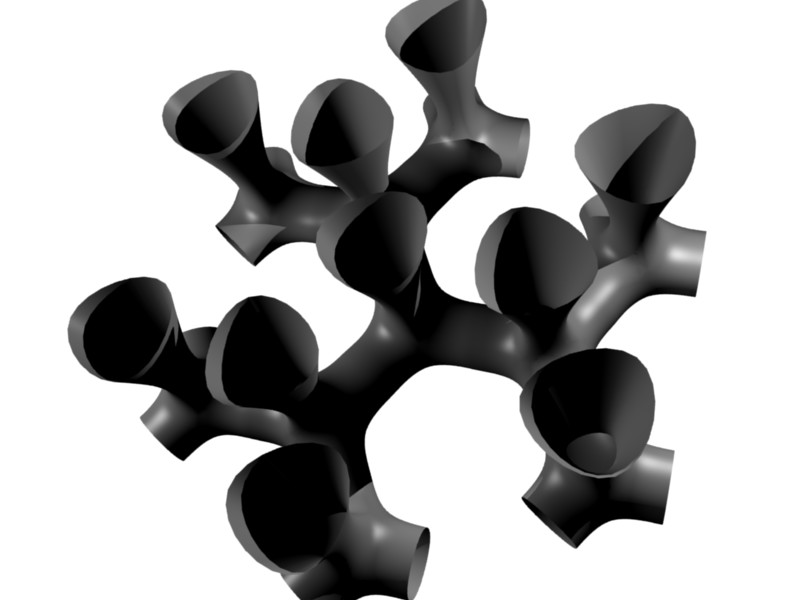}
\caption{Surfaces modelées sur $\Bb Z^2$ à partir d'un tore à $4$ composantes de bord et sur l'arbre de degré 3 à partir d'une cellule de volume infini}\label{fig:SurfGraphes}
\end{figure}\end{center}

	Etant donnés une cellule $C$ et un graphe $G$, il n'y a pas nécessairement unicité à isotopie près de la surface $G$-périodique que l'on peut construire de cette façon : des twists de Dehn le long des géodésiques recollées restent possibles.
\pgh
   	
	Une autre grande famille dde variétés topologiquement infinies à laquelle nos résultats s'appliquent est celle des \ind{revêtements riemanniens galoisiens}. Soit $\pi : M\ra N$ un revêtement riemannien galoisien, de groupe d'automorphisme $\Gamma$ de type fini. Nous avons vu à la Section \ref{sec:CoinsRevet} qu'il existe un domaine fondamental $C$ dans $M$ pour l'action de $\Gamma$ dont le bord est $\Cl C^1$ par morceaux. Soit $S_C = \{\gamma_1,...,\gamma_v\}$ un système de générateurs (symétrique) asssocié au domaine $C$ par la Proposition \ref{prop:GeneGroupeRev}. L'hypersurface ($\Cl C^1$ par morceaux) $C\cap \gamma_1(C)\subset \bd C$ est de codimension 1, $\Cl C^1$ par morceaux. En particulier, elle contient un ouvert (non vide) d'adhérence compacte $\alpha_1$ lisse, qui admet un voisinage tubulaire $T_1$ de largeur $R$. Supposons que $\gamma_1^{-1} = \gamma_{\frac{v}{2}+1}$, on pose $\alpha_{\frac{v}{2}+1} = \gamma^{-1}_1(\alpha_1)$. Quitte à diminuer $R$, on peut supposer que le voisinage tubulaire $T_{\frac{v}{2}+1}$ de $\alpha_{\frac{v}{2}+1}$ de largeur $R$ n'intersecte pas $T_1$. Puis, si $\gamma_2\notin\{\gamma_1,\gamma_1^{-1}\},$ il existe un ouvert $$\alpha_2\subset C\cap\gamma_2(C)\subset \bd C$$ de codimension 1, d'adhérence compacte lisse dans $\bd C$, et quitte à rétrécir $\alpha_1$ et $R$, on peut supposer que son voisinage tubulaire de largeur $2R$ n'intersecte pas $T_1$ et $T_{\frac{v}{2}+1}$. En répétant ce procédé un nombre fini de fois, on obtient une cellule qui vérifie les propriétés demandées à la Définition \ref{def:GPeriod} et une variété $G$-périodique, où $G$ est le \ind{graphe de Cayley} de $\Gamma$ associé au système de générateurs $S_C$.

Le pavage régulier du plan correspond au revêtement canonique $p : \Bb R^2 \ra \Bb T^2$. La Figure \ref{fig:DomMorse} présente un exemple de variété $\Bb Z$-périodique où la cellule a un bord non-compact. On fixe les zones de transition en choisissant un compact dans chaque composante de bord, d'intérieur non vide.

Dans l'étude spectrale de ces variétés $G$-périodiques, un premier constat est immédiat :
\begin{prop}
Soit $G$ un graphe à valence constante $v<\infty$, et $M$ une variété $G$-périodique de cellule $C$. Alors
$$\lambda_0(M)\geq \lambda_0(C),$$
où l'on impose les conditions de Neumann sur $\bd C$.
\end{prop}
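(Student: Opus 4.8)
The plan is to apply the Neumann monotonicity principle (Proposition \ref{prop:MonoNeum}) directly to the tautological decomposition of $M$ furnished by its $G$-periodic structure. By Definition \ref{def:GPeriod}, the family $(C_i)_{i\in G}$ realizes $M = \bigcup_{i\in G} C_i$ as a partition into submanifolds with pairwise disjoint interiors; each $C_i$ has nonempty interior and piecewise $\Cl C^1$ boundary, these two properties being inherited from $C$ through the isometries $J_i : C_i \ra C$. Thus the hypotheses of Proposition \ref{prop:MonoNeum} are met, and it yields
$$\lambda_0(M) \geq \inf_{i\in G} \lambda_0(C_i),$$
where each $\lambda_0(C_i)$ is understood with Neumann conditions on $\bd C_i$.

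The only remaining ingredient is that the bottom of the spectrum with Neumann conditions is an isometry invariant. This follows immediately from the Min-Max characterization (Proposition \ref{prop:MinMax}): the isometry $J_i : C_i \ra C$ pulls $\Cl H^1(C)$ back bijectively onto $\Cl H^1(C_i)$ while preserving both $\norm{\nabla f}$ and $\norm{f}$, hence preserving every Rayleigh quotient and therefore their infimum. Consequently $\lambda_0(C_i) = \lambda_0(C)$ for every vertex $i \in G$, and combining the two displays gives $\lambda_0(M) \geq \inf_{i} \lambda_0(C_i) = \lambda_0(C)$, which is the claim.

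I expect no genuine obstacle here: the proposition is essentially the monotonicity statement specialized to the $G$-periodic setting, and its entire content is the bookkeeping that checks the partition hypotheses and the isometry invariance of $\lambda_0$. The single point deserving a word of care is the matching of boundary conditions, since Proposition \ref{prop:MonoNeum} is stated with Neumann conditions on each piece; because we are free (as assumed in the statement) to impose Neumann conditions on $\bd C$, there is no mismatch between the conditions on the cells $C_i$ and the global problem on $M$, and the argument goes through verbatim.
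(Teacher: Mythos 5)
Your proof is correct and is exactly the paper's argument: the paper dispatches this proposition in one line as a direct corollary of Proposition \ref{prop:MonoNeum}, applied to the tautological partition $M=\bigcup_{i\in G} C_i$, with the isometry invariance of $\lambda_0$ (which you justify via the Min-Max principle) left implicit. Your write-up simply makes that bookkeeping explicit, which is fine.
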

\begin{proof} C'est un corollaire direct de la Proposition \ref{prop:MonoNeum}.\end{proof}

        \subsection{Recollement de fontions propres}

        Soit $G$ un graphe de valence constante $v$, et $M$ une variété $G$-périodique de cellule $C$. On suppose désormais que $C$ admet une fonction propre $\phi_0$ pour son bas du spectre $\lambda_0$ (on rappelle que sur une variété à bord nous considérons toujours le problème de Neumann, défini à la Section \ref{ssec:PreSpec}). Pour $i\in G$, on note toujours $J_i : C_i\ra C$ l'isométrie d'identification de la cellule $C_i$.
        
	\begin{defi}
	Nous dirons que $\phi_0$ \emph{se recolle bien dans $M$} si la fonction $\widetilde\phi_0 : M\ra \Bb R$ définie sur l'intérieur de chaque cellule $C_i$ par
	$$\widetilde\phi_0|_{\inter{C_i}} = \phi_0\circ J_i$$ se prolonge en une application continue sur $M$ .
	\end{defi}
	
	Nous appellerons $\widetilde\phi_0$ \emph{l'extension de $\phi_0$ à $M$}, et nous la noterons souvent simplement $\phi_0$. Cette hypothèse de recollement de la première fonction propre sera cruciale pour la partie délicate des démonstrations de nos résultats. Il faut la voir comme une hypothèse \emph{sur la cellule $C$}. Elle est certes très restrictive ; mais pour nombre de variétés $G$-périodiques intéressantes, il existe une cellule pour laquelle elle est vérifiée. Reprenons les deux familles d'exemples décrites à la section précédente.
	
\pgh
Dans le cadre des surfaces hyperboliques, une hypothèse supplémentaire de \emph{symétrie globale de la cellule} assure que la fonction propre se recolle bien. 
	
	Soit $C$ une surface hyperbolique dont le bord est constitué de $v$ géodésiques fermées que nous notons $\alpha_1,..., \alpha_v$. Supposons que $C$ est munie d'une isométrie globale $\Cl I$ d'ordre $v$ telle que pour tout $k = 1,...,v$, 
	$$\Cl I \alpha_k = \alpha_{k+1},$$
	en notant $\alpha_{v+1} = \alpha_1$. Soit $G$ un graphe à valence $v$ et $\{J_i : C_i\ra C\}_{i\in G}$ des copies de $C$. Soit $M$ la surface modelée sur $G$ à partir de $C$ définie en recollant $\alpha_k^i$ à $\alpha_{k'}^j$ via $J_i^{-1}\circ\Cl I^{k-k'}\circ J_j$, selon n'importe quelle combinaison des bords des cellules telle que la surface ainsi obtenue respecte la combinatoire du graphe. 
		
	\begin{prop}
	Si $C$ admet une première fonction propre $\phi_0$, alors celle-ci se recolle bien dans $M$.
	\end{prop}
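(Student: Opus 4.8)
Le plan est de ramener la continuit\'e de $\widetilde\phi_0$ \`a la seule invariance de $\phi_0$ sous l'isom\'etrie globale $\Cl I$. D'abord, j'observe que le groupe cyclique $\langle\Cl I\rangle$, d'ordre $v$, agit sur la cellule $C$ par isom\'etries pr\'eservant les conditions de Neumann au bord. Comme le bas du spectre de $C$ est atteint par $\phi_0\in\Cl H^1(C)\subset L^2(C)$, la Proposition \ref{prop:InvEigFunc} assure que $\phi_0$ est invariante sous ce groupe, c'est-\`a-dire $\phi_0\circ\Cl I^m = \phi_0$ pour tout entier $m$. On pourrait aussi l'obtenir via l'unicit\'e du Th\'eor\`eme \ref{theo:EigFunc}: la fonction $\phi_0\circ\Cl I$ est encore une fonction propre strictement positive associ\'ee \`a $\lambda_0(C)$, donc \'egale \`a $c\,\phi_0$ pour un certain $c>0$; comme $\Cl I$ est d'ordre fini, on a $c^v = 1$, d'o\`u $c = 1$.

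Ensuite, je remarque que $\phi_0$ est lisse sur $\inter{C}$ et continue jusqu'au bord, les g\'eod\'esiques de bord \'etant lisses; la fonction $\widetilde\phi_0$ est donc automatiquement continue \`a l'int\'erieur de chaque cellule $C_i$. Il ne reste qu'\`a v\'erifier la co\"incidence des valeurs de $\widetilde\phi_0$ le long de chaque zone de transition recoll\'ee. Consid\'erons une ar\^ete de $G$ donnant le recollement de $\alpha_k^i = J_i^{-1}(\alpha_k)$ \`a $\alpha_{k'}^j = J_j^{-1}(\alpha_{k'})$ par l'application $\psi = J_i^{-1}\circ\Cl I^{k-k'}\circ J_j$, qui envoie $\alpha_{k'}^j$ sur $\alpha_k^i$. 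Pour $p\in\alpha_{k'}^j$ identifi\'e \`a $\psi(p)\in\alpha_k^i$, la limite de $\widetilde\phi_0$ prise depuis $C_j$ vaut $\phi_0(J_j(p))$, tandis que celle prise depuis $C_i$ vaut $\phi_0(J_i(\psi(p))) = \phi_0(\Cl I^{k-k'}(J_j(p)))$.

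En posant $x = J_j(p)\in\alpha_{k'}$, la condition de continuit\'e s'\'ecrit exactement $\phi_0(x) = \phi_0(\Cl I^{k-k'}(x))$, qui est satisfaite par l'invariance \'etablie \`a la premi\`ere \'etape. Comme cette invariance vaut pour toute puissance de $\Cl I$, la v\'erification reste valable quels que soient les indices $k,k'$ mis en jeu, donc quelle que soit la combinaison de bords choisie pour respecter la combinatoire de $G$. Ceci montre que $\widetilde\phi_0$ se prolonge en une application continue sur $M$, autrement dit que $\phi_0$ se recolle bien. Le point d\'elicat n'est pas un v\'eritable obstacle mais l'identification de la bonne propri\'et\'e \`a exploiter: c'est la sym\'etrie globale $\Cl I$, via la Proposition \ref{prop:InvEigFunc}, qui force l'invariance de $\phi_0$ et rend le recollement automatique, le reste se r\'eduisant au d\'eroulement des d\'efinitions des applications de recollement.
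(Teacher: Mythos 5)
Votre démonstration est correcte et suit exactement la même idée que celle de l'article : l'invariance de $\phi_0$ sous l'isométrie globale $\Cl I$ via la Proposition \ref{prop:InvEigFunc} (l'article se contente d'ailleurs de cette seule remarque). Vous ne faites qu'expliciter le déroulement des applications de recollement $J_i^{-1}\circ\Cl I^{k-k'}\circ J_j$, ce qui est un complément de détail utile mais non une approche différente.
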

	\begin{proof}
	Il suffit de remarquer que $\phi_0$ est invariante par $\Cl I$ d'après la proposition \ref{prop:InvEigFunc}.
	\end{proof}

\pgh
Dans le cas des revêtements, l'hypothèse de recollement est assurée par un critère très simple à vérifier :

\begin{prop}
Soit $p : M\ra N$ un revêtement galoisien de groupe d'automorphisme $\Gamma$ de type fini. On suppose qu'il existe une première fonction propre $\phi_0 : N\ra \Bb R$ associée au bas du spectre $\lambda_0(N)$. 

S'il existe un domaine fondamental à coins $C\subset M$ tel que $\phi_0\circ p$ vérifie les conditions de Neumann sur $\bd C$, alors $$\lambda_0(C) = \lambda_0(N)$$ et la première fonction propre de $C$ existe et est donnée par la restriction de $\phi_0\circ p$ à  $C$. Elle s'étend donc par l'action de $\Gamma$ en une fonction $\widetilde{\phi_0}$ continue sur $M$.
\end{prop}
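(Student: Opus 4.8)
Le plan est de tirer parti de la \emph{positivit\'e} de $\phi_0$ pour montrer que son relev\'e restreint \`a $C$ r\'ealise le bas du spectre de Neumann de $C$, puis d'appliquer le Th\'eor\`eme \ref{theo:EigFunc}. Je commencerais par relever $\phi_0$ \`a $M$ : comme $p$ est une isom\'etrie locale, le Laplacien commute au tir\'e en arri\`ere, de sorte que $\phi_0\circ p$ v\'erifie $\Delta_M(\phi_0\circ p)=\lambda_0(N)(\phi_0\circ p)$ sur $M$, et est $\Cl C^\infty$ et strictement positive (d'apr\`es le Th\'eor\`eme \ref{theo:EigFunc} appliqu\'e \`a $N$, on a $\phi_0>0$ partout). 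Posons $\psi = (\phi_0\circ p)|_C$ : sur $\inter{C}$ on a $\Delta_C\psi = \lambda_0(N)\psi$ (le Laplacien est un op\'erateur local), $\psi>0$ jusqu'au bord, et $\psi$ v\'erifie les conditions de Neumann sur $\bd C$ par hypoth\`ese. Comme $C$ est un domaine fondamental et que $p$ pr\'eserve localement le volume et la norme du gradient, le changement de variable $\int_C F\circ p\,dV_M = \int_N F\,dV_N$ donne $\norm{\psi}_{L^2(C)}=\norm{\phi_0}_{L^2(N)}$ et $\norm{\nabla\psi}_{L^2(C)}=\norm{\nabla\phi_0}_{L^2(N)}$ ; ainsi $\psi\in\Cl H^1(C)$ et son quotient de Rayleigh vaut $\lambda_0(N)$. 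Le Principe du Min-Max (Proposition \ref{prop:MinMax}) fournit imm\'ediatement $\lambda_0(C)\leq\lambda_0(N)$.

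L'in\'egalit\'e inverse $\lambda_0(C)\geq\lambda_0(N)$ est le c\oe ur de la preuve : elle ne peut r\'esulter d'un argument g\'en\'eral sur les rev\^etements, car pour un domaine fondamental quelconque on a en g\'en\'eral $\lambda_0(C)<\lambda_0(N)$ --- c'est pr\'ecis\'ement le ph\'enom\`ene de moyennabilit\'e \'etudi\'e dans cet article. C'est donc la \emph{positivit\'e} de $\psi$ qui doit \^etre exploit\'ee. Pour toute fonction test $f\in\Cl H^1(C)$, j'\'ecrirais $f=\psi g$ avec $g=f/\psi$ (licite car $\psi>0$ jusqu'au bord) et je d\'evelopperais $\norm{\nabla f}^2 = \int_C\left(g^2|\nabla\psi|^2 + 2g\psi\,\nabla\psi\cdot\nabla g + \psi^2|\nabla g|^2\right)$. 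La formule de Green appliqu\'ee \`a $\int_C\nabla\psi\cdot\nabla(g^2\psi)$ identifie la somme des deux premiers termes \`a $\int_C g^2\psi\,\Delta\psi = \lambda_0(N)\norm{f}^2$, le terme de bord $\int_{\bd C}g^2\psi\,(\nabla\psi\cdot\nu)$ \'etant nul gr\^ace aux conditions de Neumann v\'erifi\'ees par $\psi$ (hors des coins, de mesure nulle). Il reste $\norm{\nabla f}^2 = \lambda_0(N)\norm{f}^2 + \int_C\psi^2|\nabla g|^2 \geq \lambda_0(N)\norm{f}^2$, d'o\`u $\frac{\norm{\nabla f}^2}{\norm{f}^2}\geq\lambda_0(N)$ pour toute $f$, et donc $\lambda_0(C)\geq\lambda_0(N)$ par le Min-Max.

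En combinant les deux in\'egalit\'es, on obtient $\lambda_0(C)=\lambda_0(N)$. La fonction $\psi\in\Cl H^1(C)$ a alors un quotient de Rayleigh \'egal \`a $\lambda_0(C)$ : le Th\'eor\`eme \ref{theo:EigFunc} assure que $\psi$ est \emph{la} premi\`ere fonction propre de $C$ (positive, unique \`a normalisation pr\`es), \'egale par construction \`a la restriction de $\phi_0\circ p$. Enfin, comme $p\circ\gamma = p$ pour tout $\gamma\in\Gamma$, la fonction $\phi_0\circ p$ est $\Gamma$-invariante et de classe $\Cl C^\infty$ sur $M$ tout entier ; elle co\"incide avec l'extension de $\psi$ par l'action de $\Gamma$, qui est donc bien continue sur $M$. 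Le principal obstacle technique sera de justifier rigoureusement la substitution $f=\psi g$ pour toute $f\in\Cl H^1(C)$ : contr\^ole de la division par $\psi$ (qui peut d\'ecro\^itre vers $0$ \`a l'infini lorsque $C$ est non compacte), annulation effective du terme de bord sur une vari\'et\'e \`a coins, et approximation par des fonctions \`a support compact assurant la validit\'e de la formule de Green.
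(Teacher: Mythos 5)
Your proof is correct and follows essentially the same route as the paper: lift $\phi_0$ to the fundamental domain, observe that the restriction $\psi$ is a positive, $\Cl H^1$, $\lambda_0(N)$-harmonic function satisfying the Neumann condition on $\bd C$, conclude that it realizes the bottom of the Neumann spectrum of $C$, and then invoke Theorem \ref{theo:EigFunc} and $\Gamma$-invariance for the extension to $M$. The only difference is one of detail: where the paper simply asserts that a positive Neumann eigenfunction is necessarily associated to the bottom of the spectrum, you reprove this fact explicitly via the ground-state substitution $f=\psi g$ (Barta's inequality), together with the lower bound $\lambda_0(C)\leq\lambda_0(N)$ from the Min-Max --- a more self-contained justification of the step the paper leaves implicit, whose technical caveats (density of compactly supported test functions, vanishing of the boundary term outside the corners) you correctly identify.
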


\begin{proof}
Comme $\phi_0$ est $\Cl H^1$ sur $N$, son relevé à $C$ l'est également. Elle est aussi $\lambda_0(N)$-harmonique, vérifiant les conditions de Neumann sur $\bd C$ par hypothèse : elle est donc par définition valeur propre du Laplacien avec condition de Neumann associée à $\lambda_0(N)$, et puisqu'elle est positive, $\lambda_0(N)$ est nécessairement le bas du spectre de $C$.
\end{proof}

Nous avons vu à la Section \ref{sec:CoinsRevet} une condition suffisante pour qu'un tel domaine fondamental existe : 
\begin{prop}
Soit $(N,g)$ une variété riemannienne qui admet une première fonction propre $\phi_0 : N\ra \Bb R$ associée au bas du spectre $\lambda_0(N)$. Si le gradient de la première fonction propre $\phi_0$ est \emph{simplement stratifié}, alors pour tout revêtement $p : M\ra N$ de groupe d'automorphisme $\Gamma$, il existe un domaine fondamental à coins $C\subset M$ tel que $\phi_0\circ p$ vérifie les conditions de Neumann sur $\bd C$.
\end{prop}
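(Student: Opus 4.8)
Le plan est de reconna\^itre que cette proposition n'est qu'une sp\'ecialisation du Th\'eor\`eme \ref{theo:DomFondStrat} au choix particulier $f = \phi_0$. En effet, l'hypoth\`ese exig\'ee par ce th\'eor\`eme --- que le gradient de la fonction consid\'er\'ee soit simplement stratifi\'e --- est pr\'ecis\'ement l'hypoth\`ese faite ici sur $\phi_0$. Je commencerais donc par appliquer le Th\'eor\`eme \ref{theo:DomFondStrat} \`a $f = \phi_0$ : pour tout rev\^etement riemannien galoisien $p : M\ra N$ de groupe d'automorphisme $\Gamma$, il fournit un domaine fondamental $C\subset M$ \`a bord $\Cl C^1$ par morceaux et d'int\'erieur connexe tel que le gradient du relev\'e $\phi_0\circ p$ soit tangent \`a $\bd C$ en dehors des points singuliers.

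Il resterait ensuite \`a traduire cette conclusion dans le langage des conditions de Neumann. Comme rappel\'e \`a la Section \ref{ssec:PreSpec}, dire qu'une fonction de classe $\Cl C^1$ v\'erifie les conditions de Neumann le long d'un bord dont l'espace tangent n'est d\'efini qu'en dehors d'un ensemble de mesure nulle --- ici les coins, ou points singuliers, de $\bd C$ --- signifie exactement que son gradient y est tangent au bord en dehors de cet ensemble. La tangence fournie par le Th\'eor\`eme \ref{theo:DomFondStrat} co\"incide donc mot pour mot avec la condition de Neumann, ce qui ach\`everait la preuve.

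Le seul point \`a v\'erifier avec soin --- davantage une v\'erification qu'un v\'eritable obstacle --- est que la condition de Neumann ait bien un sens pour $\phi_0\circ p$ le long de $\bd C$, c'est-\`a-dire que ce relev\'e soit au moins de classe $\Cl C^1$ au voisinage de $\bd C$. Ceci d\'ecoule du fait que $\bd C$ est une hypersurface situ\'ee dans l'int\'erieur de $M$ (le bord du domaine fondamental est une coupure artificielle, et non un bord de la vari\'et\'e $N$) et que $\phi_0$ est de classe $\Cl C^\infty$ sur $\inter{N}$ d'apr\`es le Th\'eor\`eme \ref{theo:EigFunc} ; le rel\`evement par l'isom\'etrie locale $p$ (la structure riemannienne de $M$ \'etant relev\'ee de celle de $N$) pr\'eserve cette r\'egularit\'e. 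Le gradient de $\phi_0\circ p$ est donc partout d\'efini et lisse le long de $\bd C$, et l'\'enonc\'e de tangence a bien le sens voulu.
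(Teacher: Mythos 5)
Votre preuve est correcte et suit exactement la même démarche que l'article, qui se contente d'affirmer que cette proposition est un corollaire direct du Théorème \ref{theo:DomFondStrat} appliqué à $f = \phi_0$. Vos vérifications supplémentaires (l'équivalence entre tangence du gradient et conditions de Neumann au sens de la Section \ref{ssec:PreSpec}, et la régularité $\Cl C^\infty$ du relevé via le Théorème \ref{theo:EigFunc}) ne font qu'expliciter ce que l'article laisse implicite.
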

Il s'agit d'un corollaire direct du Théorème \ref{theo:DomFondStrat}.

Nous nous sommes demandés à la Section \ref{sec:CoinsRevet} si la première fonction propre avait toujours un gradient simplement stratifié : en cas de réponse positive, la proposition précédente s'étend à toutes les variétés riemanniennes à trou spectral strictement positif. Dans \cite{Tap09b}, nous montrons que pour une métrique générique, c'est le cas : la première fonction propre est de Morse.

\begin{theo}[\cite{Tap09b}]\label{th:ResumDomFond}
Soit $(N,g)$ une variété riemannienne de volume infini de rayon d'injectivité et de trou spectral strictement positifs, dont les dérivées partielles du tenseur de courbure d'ordre inférieur à $\frac{n}{2}$ sont uniformément bornées. Soit $\phi_0$ sa première fonction propre, pour tout $a>0$, les ensembles $\{x\ ; \phi_0(x)>a\}$ sont compact. De plus, lorsque la métrique est \ind{générique}, $\phi_0$ est de Morse. Pour tout revêtement riemannien $p : M\ra N$ de groupe d'automorphisme $\Gamma$, il existe alors un domaine fondamental pour $\Gamma$ sur lequel $\phi_0$ vérifie les conditions de Neumann.
\end{theo}

Dans ce théorème, le terme \emph{<<générique>>} est employé au sens classique suivant : pour toute métrique $g$, il existe un voisinage $\Cl V_g$ pour une topologie adaptée à l'ensemble des métriques sur une variété non compacte (la \ind{topologie forte}, ou \ind{topologie de Whitney}) tel que toutes les métriques à l'intérieur de ce voisinage ont un trou spectral strictement positif, et l'ensemble de celles dont la première fonction propre est de Morse est une intersection dénombrable d'ouverts denses. Nous invitons le lecteur à se référer à \cite{Tap09b} et aux références qui y sont donnés pour plus de détails sur ce résultat et les topologies utilisées.

Nous travaillerons donc désormais en supposant que nous avons une variété $G$-périodique et une cellule $C$ dont la première fonction propre se recolle bien. Nous venons de voir que dans de nombreuses situations intéressantes, cette hypothèse est vérifiée !

	\subsection{Résultats : contrôle du bas du spectre}\label{ssec:Result}
	
	Notre premier résultat est une adaptation au cadre des variétés $G$-périodiques de méthodes classiques reliant moyennabilité et bas du spectre :
	
	\begin{theo}\label{theo:MajoGene}
	Soit $G$ un graphe à valence constante et $M$ une variété $G$-périodique dont la cellule $C$ admet une première fonction propre qui se recolle bien dans $M$. Si $G$ est moyennable, on a
	$$\lambda_0(M) = \lambda_0(C).$$
	Si $G$ n'est pas moyennable, si le rayon d'injectivité normale de $\bd C$ est strictement minoré par $R_{min}$, et si le nombre de voisins de chaque cellule (c'est-à-dire le nombre d'autres cellules que son bord intersecte) est une constante $k<\infty$, alors il existe une constante $A'>0$  qui ne dépend que de $k$, de $R_{min}$ et de $\lambda_0(C)$ telle que
	$$\lambda_0(M)\leq \lambda_0(C) + A'\g h(G).$$
	\end{theo}
		
	Notre principal résultat lorsque $G$ n'est pas moyennable est le suivant :
	
	\begin{theo}\label{theo:ControleGene}
	Soit $C$ une cellule de valence $v$ qui admet une première fonction propre $\phi_0$ et un trou spectral $\eta = \lambda_1(C)-\lambda_0(C)>0$. Alors il existe une constante $A>0$, ne dépendant que de $\lambda_0$, des zones de transition de $C$ et valeurs de $\phi_0$ sur $\bd C$, telle que pour tout graphe $G$ de valence $v$ et toute variété $G$-périodique $M$ de cellule $C$ où $\phi_0$ se recolle bien,
	$$\lambda_0(M)\geq \lambda_0(C) + A\eta\mu_0(G).$$
	\end{theo}
	
	Les constantes $A$ et $A'$ restent positives et bornées lorsque $\lambda_0(C)$ varie ; elles dépendent principalement de la géométrie sur les zones de transition de $C$ et des valeurs de $\phi_0$ sur $\bd C$. Cette dépendance sera détaillée au cours de la démonstration. On a vu que l'on a toujours
	$$\lambda_0(M)\geq \lambda_0(C).$$ Si l'on ne cherche pas à garder le contrôle explicite de l'écart entre les bas de spectres, ces théorèmes deviennent alors :
	
	\begin{coro}\label{coro:InegMoy}
	Soit $C$ une cellule de valence $v$ qui admet une première fonction propre $\phi_0$ et un trou spectral strictement positif, soit $G$ un graphe de même valence $v$, et $M$ une variété modelée sur $G$ à partir de $C$ où $\phi_0$ se recolle bien. Alors
	$$\lambda_0(M)\geq \lambda_0(C),$$
	avec égalité si et seulement si $G$ est moyennable.
	\end{coro}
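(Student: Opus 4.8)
Le plan est de déduire ce corollaire directement des Théorèmes \ref{theo:MajoGene} et \ref{theo:ControleGene}, en traduisant la moyennabilité de $G$ en termes du bas du spectre combinatoire $\mu_0(G)$ grâce aux inégalités de Cheeger combinatoires (Théorème \ref{theo:CheegComb}). On dispose déjà de l'inégalité large : la proposition établissant $\lambda_0(M)\geq \lambda_0(C)$, qui suit la Définition \ref{def:GPeriod}, est un corollaire de la Proposition \ref{prop:MonoNeum}. Il ne reste donc qu'à établir l'équivalence entre l'égalité et la moyennabilité de $G$.

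Pour le sens direct, je commencerais par supposer $G$ moyennable. C'est exactement la première partie du Théorème \ref{theo:MajoGene}, qui fournit $\lambda_0(M)=\lambda_0(C)$ sous la seule hypothèse que $\phi_0$ se recolle bien, hypothèse présente dans l'énoncé du corollaire. L'égalité est donc immédiate dans ce cas.

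Pour la réciproque, j'établirais la contraposée : si $G$ n'est pas moyennable, alors l'inégalité est stricte. D'après le Théorème \ref{theo:CheegComb} combiné à la Définition \ref{def:GrapheMoy}, $G$ est moyennable si et seulement si $\g h(G)=0$, donc si et seulement si $\mu_0(G)=0$ ; ainsi $G$ non moyennable entraîne $\mu_0(G)>0$. Le Théorème \ref{theo:ControleGene} s'applique alors, puisque $C$ possède par hypothèse un trou spectral $\eta>0$, et donne $\lambda_0(M)\geq \lambda_0(C)+A\eta\mu_0(G)$ avec $A>0$. Comme $A$, $\eta$ et $\mu_0(G)$ sont tous strictement positifs, on en tire $\lambda_0(M)>\lambda_0(C)$, c'est-à-dire l'inégalité stricte voulue.

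En réunissant les deux sens, on conclut que $\lambda_0(M)=\lambda_0(C)$ si et seulement si $G$ est moyennable. Le point à surveiller, et c'est en réalité le seul à ce niveau, est que la stricte positivité de l'écart dans le cas non moyennable repose de façon essentielle sur le fait que les trois facteurs $A$, $\eta$ et $\mu_0(G)$ demeurent strictement positifs : c'est précisément là qu'intervient l'hypothèse de trou spectral strictement positif, sans laquelle le Théorème \ref{theo:ControleGene} ne fournirait aucune minoration utile. Le véritable travail étant entièrement contenu dans les deux théorèmes invoqués, le corollaire ne présente pas d'obstacle sérieux : il s'agit uniquement d'assembler ces deux minorations avec la caractérisation combinatoire de la moyennabilité.
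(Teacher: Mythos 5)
Votre démonstration est correcte et suit exactement la voie du papier : le corollaire y est présenté comme conséquence immédiate des Théorèmes \ref{theo:MajoGene} (cas moyennable) et \ref{theo:ControleGene} (cas non moyennable, via $\mu_0(G)>0$ garanti par le Théorème \ref{theo:CheegComb}), combinés à l'inégalité large issue de la Proposition \ref{prop:MonoNeum}. Rien à redire : c'est l'assemblage attendu.
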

	
Appliqué au cas des revêtements qui admettent un domaine fondamental dont la première fonction propre se recolle bien, ce corollaire et le Théorème \ref{th:ResumDomFond} nous permettent d'obtenir le corollaire suivant :
	
	\begin{coro}
Soit $N$ une variété riemannienne dont la courbure de Ricci et le rayon d'injectivité sont uniformément minorés, et le trou spectral strictement positif, soit $\phi_0$ sa première fonction propre. Soit $p : M\ra N$  un revêtement riemannienn galoisien, dont le groupe d'automorphisme $\Gamma$ est de type fini. S'il existe un domaine fondamental pour l'action de $\Gamma$ dans $M$ sur lequel le relevé de $\phi_0$ vérifie les conditions de Neumann, alors
	$$\lambda_0(M)\geq \lambda_0(N)$$
	avec égalité si et seulement si $\Gamma$ est moyennable. C'est le cas dès que les $\frac{n}{2}$ premières dérivées du tenseur de courbure sont uniformément bornées et que la métrique est générique.
	\end{coro}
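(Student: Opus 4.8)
Le plan est de reconna\^itre ce rev\^etement comme une vari\'et\'e $G$-p\'eriodique, puis d'appliquer le Corollaire \ref{coro:InegMoy}. Je pars du domaine fondamental $C\subset M$ \`a bord $\Cl C^1$ par morceaux donn\'e par hypoth\`ese, sur lequel le relev\'e de $\phi_0$ v\'erifie les conditions de Neumann. La Proposition \ref{prop:GeneGroupeRev} lui associe un syst\`eme fini sym\'etrique de g\'en\'erateurs $S_C$ de $\Gamma$ ; comme au Paragraphe \ref{ssec:GPeriod}, en choisissant dans chaque hypersurface $C\cap\gamma(C)$, pour $\gamma\in S_C$, une zone de transition compacte munie d'un voisinage tubulaire, on munit $M$ d'une structure de vari\'et\'e $G$-p\'eriodique de cellule $C$, o\`u $G$ est le graphe de Cayley de $(\Gamma,S_C)$, de valence constante $v=\#S_C$. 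La proposition de recollement des fonctions propres donne alors $\lambda_0(C)=\lambda_0(N)$, montre que la restriction de $\phi_0\circ p$ \`a $C$ est la premi\`ere fonction propre de $C$, et que celle-ci se recolle bien dans $M$, ses valeurs se correspondant sur les bords identifi\'es puisqu'elle est $\Gamma$-invariante.

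Le point d\'elicat, qui constitue le principal obstacle, est de v\'erifier la derni\`ere hypoth\`ese du Corollaire \ref{coro:InegMoy} : que la cellule $C$ poss\`ede elle aussi un trou spectral strictement positif. Comme $\lambda_0(C)=\lambda_0(N)$ est une valeur propre atteinte par $\phi_0$, il s'agit, d'apr\`es la caract\'erisation du trou spectral par le spectre essentiel, de montrer que $\lambda_0(C)<\lambda_0^{ess}(C)$. Passer de $N$ \`a $C$ revient \`a d\'ecouper $N$ le long de l'hypersurface $p(\bd C)$ en imposant la condition de Neumann ; la monotonie de Neumann ne fournit que l'in\'egalit\'e $\lambda_0^{ess}(C)\leq\lambda_0^{ess}(N)$, et tout le travail consiste \`a s'assurer que ce d\'ecoupage ne fait pas redescendre le spectre essentiel jusqu'\`a $\lambda_0(N)$. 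J'utiliserais pour cela la transformation par l'\'etat fondamental $u\mapsto\phi_0u$, qui conjugue $\Delta-\lambda_0$ \`a l'op\'erateur positif $L$ associ\'e \`a la forme quadratique \`a poids $\int_C|\nabla u|^2\phi_0^2\,dV$ sur $L^2(C,\phi_0^2\,dV)$, pour laquelle les fonctions constantes forment l'\'etat fondamental ; le trou spectral de $C$ devient ainsi le bas du spectre de $L$ sur l'orthogonal des constantes. Le Th\'eor\`eme \ref{th:ResumDomFond}, gr\^ace aux minorations de la courbure de Ricci et du rayon d'injectivit\'e, assure que les ensembles $\{\phi_0>a\}$ sont compacts : $\phi_0$ d\'ecro\^it exponentiellement et la d\'erive $\nabla\log\phi_0$ pr\'esente dans $L$ est confinante \`a l'infini. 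Combin\'ee \`a la g\'eom\'etrie born\'ee induite sur $C$ par le rev\^etement et \`a la caract\'erisation de Persson du bas du spectre essentiel, cette propri\'et\'e doit permettre de minorer $\lambda_0^{ess}(L)$ par une constante strictement positive, ind\'ependamment du d\'ecoupage le long de $p(\bd C)$, ce qui fournit le trou spectral voulu pour $C$.

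Ces hypoth\`eses acquises, le Corollaire \ref{coro:InegMoy} donne imm\'ediatement
$$\lambda_0(M)\geq\lambda_0(C)=\lambda_0(N),$$
avec \'egalit\'e si et seulement si le graphe mod\`ele $G$ est moyennable. Il reste \`a traduire la moyennabilit\'e de $G$ en celle de $\Gamma$ : puisque $G$ est le graphe de Cayley du groupe de type fini $\Gamma$ relativement au syst\`eme fini $S_C$, le th\'eor\`eme de F\o lner affirme que $G$ est moyennable au sens de la D\'efinition \ref{def:GrapheMoy} si et seulement si $\Gamma$ l'est au sens de la D\'efinition \ref{def:GroupeMoy}. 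On obtient donc $\lambda_0(M)\geq\lambda_0(N)$, avec \'egalit\'e si et seulement si $\Gamma$ est moyennable.

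Pour la derni\`ere assertion, il suffit de produire le domaine fondamental de Neumann requis sous les hypoth\`eses g\'en\'eriques. Lorsque les $k\geq\frac n2$ premi\`eres d\'eriv\'ees du tenseur de courbure sont uniform\'ement born\'ees et la m\'etrique est g\'en\'erique, le Th\'eor\`eme \ref{th:ResumDomFond} garantit que $\phi_0$ est de Morse ; son gradient est alors simplement stratifi\'e et le Th\'eor\`eme \ref{theo:DomFondStrat} fournit, pour tout rev\^etement, un domaine fondamental \`a bord $\Cl C^1$ par morceaux et d'int\'erieur connexe sur lequel le relev\'e de $\phi_0$ v\'erifie les conditions de Neumann. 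Les hypoth\`eses ci-dessus sont alors satisfaites, ce qui ach\`eve la preuve.
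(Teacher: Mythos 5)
Your proof follows exactly the same route as the paper, which states this corollary without a separate proof, as a direct combination of: the covering-to-$G$-periodic construction of Paragraph \ref{ssec:GPeriod} (via Proposition \ref{prop:GeneGroupeRev} and the Cayley graph of $(\Gamma,S_C)$), the gluing proposition for coverings (giving $\lambda_0(C)=\lambda_0(N)$ and the good gluing of the lift of $\phi_0$), Corollaire \ref{coro:InegMoy}, F\o lner's theorem, and Th\'eor\`eme \ref{th:ResumDomFond} for the final generic assertion. Your first, third and fourth paragraphs reproduce this faithfully and correctly.

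Your second paragraph is the only place where you depart from the paper, and there your argument is not a proof. You are right that there is a genuine mismatch: Corollaire \ref{coro:InegMoy} (through Th\'eor\`eme \ref{theo:ControleGene}) requires a strictly positive spectral gap \emph{for the cell} $C$, while the stated hypothesis is a gap for $N$; cutting $N$ along $p(\bd C)$ with Neumann conditions enlarges the space of test functions, so it can only push $\lambda_1$ and $\lambda_0^{ess}$ downwards, which is the wrong direction. But your proposed remedy (ground state transform, confining drift $\nabla\log\phi_0$, Persson's characterisation) remains at the level of a heuristic: the decisive inequality $\lambda_0^{ess}(C)>\lambda_0(C)$ is announced (``doit permettre de minorer'') rather than established, and nothing in your sketch excludes that the Neumann cut lowers the essential spectrum of $C$ all the way down to $\lambda_0(N)$ --- compactness of $\{\phi_0>a\}$ and decay of $\phi_0$ on $N$ do not by themselves control test functions concentrated near the non-compact part of $\bd C$. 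Note that the paper itself does not address this point: it applies Corollaire \ref{coro:InegMoy} as if the gap hypothesis on $N$ transferred to the cell, so implicitly the hypothesis \emph{trou spectral strictement positif} must be read on the cell $C$, as in Th\'eor\`eme \ref{theo:ControleGene}. You have therefore correctly isolated a step the paper leaves implicit, but your filling of it is incomplete; as written, either that paragraph should be replaced by an actual proof of the spectral gap of $C$, or the gap hypothesis should be taken on the cell, in which case your proof reduces to the paper's.
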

		
	De nouveau, notre méthode permet de préciser ce corollaire en donnant des constantes explicites qui bornent $\abs{\lambda_0(M)-\lambda_0(C)}$, qui dépendent alors de la combinatoire de $\Gamma$, de la géométrie au bord du domaine fondamental, et des valeurs de $\phi_0$ sur ce bord. Nos résultats peuvent être vus comme une généralisation des travaux de R. Brooks \cite{Broo85Reine}, qui montre que pour un revêtement $p : M\ra N$, sous certaines hypothèses géométriques qui impliquent entre autres que le trou spectral de $N$ est strictement positif,	$\lambda_0(M)\geq \lambda_0(N)$	avec égalité si et seulement si $\Gamma$ est moyennable. Nous détaillerons au paragraphe \ref{ssec:Brooks} le lien entre nos résultats et ceux de Brooks.

\section{Démonstrations principales}\label{sec:Demo}

        \subsection{Majoration du $\lambda_0$ et graphes moyennables}

	Soit $G$ un graphe de valence $v$ et $M$ une variété $G$-périodique, de cellule $C$. On note  $\lambda_0$ le bas du spectre du spectre $C$, et on suppose qu'elle admet une première fonction propre $\phi_0$ normalisée qui se recolle bien dans $M$. L'objectif de cette section est de démontrer le Théorème \ref{theo:MajoGene}.	
		
	\begin{proof}
	Rappelons que d'après la Proposition \ref{prop:MonoNeum}, on a toujours $\lambda_0(M)\geq \lambda_0.$
	
	Soit $\phi_\epsilon\in\Cl H^1(M)$ une fonction positive lisse à support compact $K\subset C$ telle que $$\frac{\norm{\nabla \phi_\epsilon}^2}{\norm{\phi_\epsilon}^2}\leq \lambda_0(C) + \epsilon,$$ et qui se recolle bien dans $M$. On peut la construire par exemple en prenant la restriction de $\phi_0$ à un compact $K_\epsilon\subset K$ et en lissant la fonction ainsi obtenue de façon à la rendre $\Cl C^\infty$ à support dans $K$. 
	On supposera pour simplier les notations que $\norm{\phi_\epsilon}^2 = 1$. Pour toute cellule $C_i\subset M$ donnée par la Définition \ref{def:GPeriod}, et $J_i : C_i\ra C$ son isométrie d'identification, on note $$K_i\subset C_i = J_i^{-1}(K).$$ Soit $R>0$ fixé, alors pour tout $i\in G$, un voisinage de $K_i$ de largeur $R$ intersecte un nombre fini des $K_j, j\neq i$ : nous noterons ce nombre $k_\epsilon$, qui ne dépend pas de $i$.
	
	Soit $(G_p)_{p\in\Bb N}$ une famille de parties finies (que nous pouvons supposer connexes) de $G$ telle que 
	$$\lim_{p\ra \infty}\frac{\#\bd G_p}{\#G_p}=\g h(G).$$
	On note $M_p$ la réunion des cellules correspondant à la partie $G_p$.
	
	On note $M_p^+$ un voisinage de largeur $R$ de $M_p$ : l'ensemble $M_p^+\bs M_p$ intersecte donc au plus $k_\epsilon(\#\bd G_p)$ cellules.  Nous allons simplement reproduire $\phi_\epsilon$ sur toutes les cellules de $M_p$, et rendre la fonction obtenue continue à support compact dans $M_p^+$. Cette dernière opération ajoutera au quotient de Rayleigh un terme de l'ordre de $k_\epsilon\frac{\#\bd G_p}{\# G_p}$.
	
	\pgh

On définit sur $M_p^+\bs M_p$ la fonction
$$\psi_p(x) = \frac{1}{R}(R-d(x,M_p))\leq 1.$$ On l'étend en une fonction que nous notons toujours $\psi_p : M\ra \Bb R$ qui vaut 1 sur $M_p$ et $0$ sur $M\bs M_p^+$. Comme $\bd M_p$ est $\Cl C^1$ par morceaux, $\psi_R$ est $\Cl C^1$ sur $M\bs \bd M_p$, et continue sur $\bar{M\bs \bd M_p}$. On sait qu'alors, $\forall x\in M_p^+\bs M_p$, on a
$$|\nabla \psi_p| = \frac{|\nabla d(.,M_p)|}{R}\leq \frac{1}{R}.$$

Notons encore $\phi_\epsilon$ l'extension de $\phi_\epsilon$ à $M$, qui existe et est continue par hypothèse. 

Soit $\widetilde{\phi_p}$ la fonction définie sur $M$ par	$\widetilde{\phi_p}= \phi_\epsilon\cdot\psi_p.$
La fonction $\widetilde{\phi_p}$ est $\Cl C^1$ par morceaux, continue sur $M$ et à support compact dans $M_p^+$. Calculons son énergie.
	
Pour tout $i\in G_p$, on a $$\norm{\nabla \widetilde{\phi_p}}^2_{C_i} = \int_{C_i} |\nabla \widetilde{\phi_p}|²=  \int_C |\nabla \phi_\epsilon|²\leq \lambda_0+\epsilon.$$

Soit maintenant $C_i$ une cellule qui intersecte $M_p^+\bs M_p$. Sur $C_i$, la fonction $\psi_p$ est positive, inférieure à $1$ et son gradient est de norme $1/R$, d'où
	$$\int_{C_i}|\nabla \widetilde{\phi_p}|² = \int_{\alpha_R^+}|\phi_\epsilon\nabla \psi_p+\psi_p\nabla\phi_\epsilon|²\leq \frac{1}{R^2}\int_C|\phi_\epsilon|² + 2\int_C|\phi_\epsilon g(\nabla\phi_\epsilon,\nabla (\psi_p\circ J_i^{-1}))| + \int_C|\nabla\phi_\epsilon|²$$
	$$\leq \frac{1}{R^2} + 2\frac{\sqrt{\lambda_0}}{R} + \lambda_0 + \epsilon.$$
On a obtenu la dernière majoration à l'aide de l'inégalité de Cauchy-Schwarz.

	On a donc $$\norm{\nabla \widetilde{\phi_p}}^2_M = \sum_{i\in G_p} \norm{\nabla \widetilde{\phi_p}}^2_{C_i} + \sum_{C_i\subset M_p^+\bs M_p} \norm{\nabla \widetilde{\phi_p}}^2_{C_i},$$
	soit d'après ci-dessus
	$$\norm{\nabla \widetilde{\phi_p}}^2_M \leq \#G_p (\lambda_0+\epsilon) + \#\left\{C_i\subset \bd M_p^+\bs M_p\right\}\left(\frac{1}{R^2}+\frac{2\sqrt\lambda_0}{R}+\lambda_0+\epsilon\right)$$
	$$\leq \#G_p (\lambda_0+\epsilon) + k_\epsilon\#(\bd G_p)\left(\frac{1}{R^2}+\frac{2\sqrt\lambda_0}{R}+\lambda_0+\epsilon\right).$$
	
	De plus, $\norm{\widetilde{\phi_p}}^2_M\geq \norm{\widetilde{\phi_p}}^2_{M_p} = \#G_p.$ On a donc
	$$\frac{\norm{\nabla \widetilde{\phi_p}}^2_M}{\norm{\widetilde{\phi_p}}^2_M}\leq \frac{\#G_p (\lambda_0 +\epsilon)+ k_\epsilon\#(\bd G_p)\left(\frac{1}{R^2}+\frac{2\sqrt\lambda_0}{R}+\lambda_0+\epsilon\right)}{\# G_p} = \lambda_0 +\epsilon + (A'+\epsilon k_\epsilon)\frac{\#\bd G_p}{\#G_p},$$
	en posant $$A' = k_\epsilon\left(\frac{1}{R^2}+\frac{2\sqrt\lambda_0}{R}+\lambda_0\right).$$
	
	Lorsque $G$ est moyennable, on a
	$$\lim_{p\ra\infty}\frac{\#\bd G_p}{\#G_p} = 0.$$
	La majoration précédente implique donc le premier point du Théorème \ref{theo:MajoGene}. Lorsque $C$ n'a qu'un nombre fini $k$ de voisins, et le rayon d'injectivité normale du bord est minoré par $R_{min}>0$, on peut prendre $R = R_{min}$ et $k_\epsilon = k$ pour tout $\epsilon>0$ . Ceci conclut notre preuve par passage à la limite lorsque $p\ra\infty$ et $\epsilon\ra 0$.
	
	\end{proof}
 
	Cette partie de la démonstration s'adapte dans le cas où $C$ n'admet pas de première fonction propre : il suffit qu'il existe une suite de fonction $(\phi_\epsilon),_{\epsilon>0}$ dont le quotient de Rayleigh tend vers $\lambda_0$, qui se recollent toutes dans $M$. Nous n'avons pas besoin que ces fonctions vérifient les conditions de Neumann sur $\bd C$. 
	
	Dans le cas d'un revêtement $p : M\ra N$ de groupe d'automorphisme $\Gamma$ et domaine fondamental $C$, la démonstration est analogue (même lorsque $\lambda_0(N)$ n'est pas le bas du spectre de $C$). On choisit une fonction $\phi_\epsilon$ sur $N$ à support compact dont le quotient de Rayleigh est inférieur à $\lambda_0+\epsilon$ et une famille de parties finies $G_p\subset G$ où $G$ est un graphe de Cayley de $\Gamma$, qui vérifie 	$$\lim_{p\ra\infty}\frac{\#\bd G_p}{\#G_p} = \g h(\Gamma).$$
	On relève $\phi_\epsilon$ à la réunion des domaines fondamentaux correspondant à $G_p$, et on la rend à support compact de la même façon que dans la démonstration précédente : une majoration identique à celle du Théorème \ref{theo:MajoGene} s'ensuit. Notre démonstration est très proche de celle de \cite{Broo81CMH}, §2 et a été utilisée depuis dans de nombreux articles traitant de moyennabilité.

	\begin{rema}
	Dans le cas où $G$ est le graphe de Cayley d'un groupe abélien (par exemple $\Bb Z, \Bb Z^2...$), donc moyennable, on retrouve ce qui est appelé communément une \indb{variété}{périodique}. Le résultat $\lambda_0(M) = \lambda_0(C)$ est alors un corollaire de la \ind{théorie de Floquet}. La démonstration précédente est plus élémentaire que la construction de la théorie de Floquet, et s'applique à une classe de variétés beaucoup plus large (mais le résultat est beaucoup plus faible). 
	\end{rema}
	
        \subsection{Graphes non moyennables et minoration du $\lambda_0$}

Nous allons maintenant nous intéresser à notre résultat principal, la minoration du Théorème \ref{theo:ControleGene}, qui n'a d'intérêt que lorsque $G$ n'est pas moyennable. 
	
	Soit $C$ une cellule de valence $v$, et $\alpha_1,...,\alpha_v\subset C$ ses zones de transition. Par hypothèse, les $\alpha_i$ admettent des voisinages tubulaires de largeur $R$ disjoints dans $C$ qui s'écrivent $T_i = \alpha_i\cx [0,R].$ On note toujours $\lambda_0$ le bas du spectre (avec condition de Neumann) de $C$, on suppose qu'elle admet une première fonction propre $\phi_0$ normalisée et que le trou spectral $\eta = \lambda_1-\lambda_0>0$ (voir Définition \ref{def:Trou}). Soit $G$ un graphe de valence $v$, et $M$ une variété $G$-périodique de cellule $C$, telle que $\phi_0$ se recolle bien dans $M$. L'objectif de cette section est donc de démontrer :
	
	\begin{theo}\label{th:MinNMoy}
	Avec les notations précédentes, on a
	$$\lambda_0(M)\geq\lambda_0^N(C) +A\eta\mu_0(G),$$
	où $A>0$ dépend  de $\lambda_0$, de la géométrie des zones de transition, et des valeurs de $\phi_0$ sur ces zones de transition.
	\end{theo}
	
	
	Notons $\tilde \lambda_0 = \lambda_0(M)$ et $\delta = \tilde\lambda_0-\lambda_0.$	D'après la Proposition \ref{prop:MonoNeum}, $\delta\geq 0$.
	
	La structure de la démonstration de ce théorème, un peu longue, est la suivante. Nous prenons une fonction test sur $M$ dont nous voulons évaluer le quotient de Rayleigh. Nous la discrétisons en la projetant, sur chaque cellule, sur la fonction propre $\phi_0$, et notre théorème s'obtient par une minoration du terme d'erreur qui reste après cette projection. Nous effectuons cette minoration sur les tubes de jonctions qui s'appuient sur les zones de transition entre deux cellules. Là, nous montrons que deux possibilités se présentent : soit le terme d'erreur a une norme $L^2$ minorée, soit la norme de son gradient est minorée, et dans les deux cas cela nous permet de nous ramener au spectre du Laplacien combinatoire sur le graphe et de conclure.
	
	\begin{rema}	
	L'idée de discrétiser une variété pour obtenir un controle sur le spectre du Laplacien riemannien à partir du Laplacien combinatoire sur le graphe sous-jacent à la discrétisation n'est pas nouvelle. Elle est à la base des travaux de R. Brooks que nous avons déjà mentionnés, et plus près de nous de ceux, on peut citer ceux de T. Coulhon et L. Saloff-Coste (voir \cite{CoulSal95}) et de T. Mantuano (voir \cite{Man05}). Dans ces deux articles, on discrétise la variété grace à un recouvrement localement fini par des boules, et à chaque fonction $f$ sur la variété, on associe une fonction sur le graphe qui a pour valeur en un sommet la moyenne de $f$ sur la boule correspondante (en pondérant éventuellement le graphe par le volume de la boule). Ici, la discrétisation se fait en projetant sur des <<morceaux>> de volume infini : les cellules, que nous pondérons à l'aide de leur fonction propre pour rendre ces projections finies. 
	\end{rema}
	
	\begin{proof}[Démonstration du Théorème \ref{th:MinNMoy}]
	Soit $(f_\epsilon)_{0<\epsilon<1}$ une famille de fonctions à support compact dans $M$ telle que pour tout $\epsilon>0$, on ait
	$$\frac{\int_M|\nabla f_\epsilon|^2}{\int_M f_\epsilon^2}\leq \tilde\lambda_0 + \epsilon.$$
	
	Soit $\epsilon>0$, nous discrétisons $f_\epsilon$ en la projetant, cellule par cellule, sur la fonction propre $\phi_0$. La composante orthogonale à $\phi_0$, nécessairement non nulle pour que $f_\epsilon$ ait un support compact, empêchera d'avoir $\tilde\lambda_0 = \lambda_0$. Nous notons toujours $\phi_0$ l'extension à $M$ de $\phi_0$, bien définie et continue par hypothèse. Pour chaque cellule $C_i$ de $M$, posons
	$$a_i^2 = \norm{f_\epsilon}^2_{C_i} = \int_{C_i}f_\epsilon^2,\ b_i = \langle f_\epsilon, \phi_0\rangle_{C_i} = \int_{C_i}f_\epsilon\phi_0,$$
	et
	$$c_i^2 = \norm{f_\epsilon-b_i\phi_0}^2_{C_i} = \int_{C_i}(f_\epsilon-b_i\phi_0)^2.$$
	On a alors $a_i^2 = b_i^2+c_i^2$. Nous noterons désormais $g_i = g_{i,\epsilon} = f_\epsilon-b_i\phi_0$ la composante de $f_\epsilon$ orthogonale à $\phi_0$ pour le produit scalaire $\Cl L^2$ sur $C_i$.
	
	Sur chaque cellule $C_i$, comme $\phi_0$ vérifie les conditions de Neumann,
	$$\int_{C_i} \nabla \phi_0.\nabla g_i = \int_{C_i}\Delta \phi_0 g_i = \lambda_0\int_{C_i} \phi_0 g_i = 0.$$
	On a donc
	$$\frac{\int_M |\nabla f_\epsilon|^2}{\int_M f_\epsilon^2} = \frac{\sum_i \norm{\nabla f_\epsilon}^2_{C_i}}{\sum_i \norm{f_\epsilon}^2_{C_i}} = \frac{\sum_i b_i^2\norm{\nabla \phi_0}^2_{C_i}+\norm{\nabla g_i}^2_{C_i}}{\sum_i a_i^2} = \frac{\sum_i \lambda_0b_i^2+\norm{\nabla g_i}^2_{C_i}}{\sum_i a_i^2}.$$
	Comme $g_i$ est orthogonale à $\phi_0$, $$\norm{\nabla g_i}^2_{C_i}\geq \lambda_1 \norm{g_i}^2_{C_i} = \lambda_1 c_i^2.$$ On obtient alors 
	$$ \norm{\nabla g_i}^2_{C_i} = \frac{\lambda_0}{\lambda_1}\norm{\nabla g_i}^2_{C_i}+\frac{\eta}{\lambda_1}\norm{\nabla g_i}^2_{C_i}\geq \lambda_0c_i^2+\frac{\eta}{\lambda_1}\norm{\nabla g_i}^2_{C_i},
	$$
	d'où
	$$\tilde \lambda_0+\epsilon\geq \frac{\int_M |\nabla f_\epsilon|^2}{\int_M f_\epsilon^2}\geq \frac{\sum_i \lambda_0b_i^2+\lambda_0c_i^2+\frac{\eta}{\lambda_1}\norm{\nabla g_i}^2_{C_i}}{\sum_i a_i^2} = \lambda_0 + \frac{\eta}{\lambda_1}\frac{\sum_i\norm{\nabla g_i}^2_{C_i}}{\sum_i a_i^2}.$$
	On a donc en particulier
	\beq \label{eq:MinNMoy} \delta +\epsilon = \tilde\lambda_0-\lambda_0 +\epsilon\geq \frac{\eta}{\lambda_1}\frac{\sum_i\norm{\nabla g_i}^2_{C_i}}{\sum_i a_i^2}.\eeq
	
	Nous allons donc nous intéresser au terme $\frac{\sum_i\norm{\nabla g_i}^2_{C_i}}{\sum_i a_i^2}$ et montrer le lemme :
	
	\begin{lemm}\label{lemm:AnaComb}
	Il existe une constante $A$ ne dépendant que de la géométrie et des valeurs de $\phi_0$ sur les zones de transition telle que
	$$\sum_i\norm{\nabla g_i}_{C_i}^2\geq \lambda_1A\sum_{i\sim j}(b_i-b_j)^2.$$
	\end{lemm}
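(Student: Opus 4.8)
The plan is to convert the continuity of the test function into a pointwise relation on each transition zone, and then to trade that boundary information for interior gradient energy through a trace estimate on the junction tubes, the orthogonality $g_i\perp\phi_0$ supplying the spectral factor $\lambda_1$. First I would use that $f_\epsilon\in\Cl H^1(M)$ has matching traces across every cell boundary while $\phi_0$ is glued continuously by hypothesis. On the transition zone $\alpha_{ij}=C_i\cap C_j$ shared by two adjacent cells, writing $f_\epsilon=b_i\phi_0+g_i$ on $C_i$ and $f_\epsilon=b_j\phi_0+g_j$ on $C_j$ and matching the two sides through the gluing isometry yields the jump relation $(b_i-b_j)\phi_0=g_j-g_i$ on $\alpha_{ij}$. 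Squaring, integrating over $\alpha_{ij}$ and using $m:=\min_k\int_{\alpha_k}\phi_0^2>0$ (the $\alpha_k$ are compact and $\phi_0>0$) gives
\[(b_i-b_j)^2\leq\frac{2}{m}\left(\norm{g_i}^2_{\alpha_{ij}}+\norm{g_j}^2_{\alpha_{ij}}\right),\]
so the combinatorial quantity $(b_i-b_j)^2$ is controlled by the boundary $L^2$-norms of the error terms.

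The core step is a trace inequality on each junction tube $T_k=\alpha_k\times[0,R]$. Writing $g_i(x,0)=g_i(x,r)-\int_0^r\partial_s g_i(x,s)\,ds$, applying Cauchy--Schwarz, and integrating over $r\in[0,R]$ and $x\in\alpha_k$ against the volume element $\theta^{n-1}\,dV_{\alpha_k}\,dr$ (whose oscillation is bounded on the compact tube, so $\theta^{n-1}$ is pinched between positive constants) I obtain
\[\norm{g_i}^2_{\alpha_k}\leq\frac{C'}{R}\norm{g_i}^2_{T_k}+C''R\,\norm{\nabla g_i}^2_{T_k},\]
where $C',C''$ depend only on the geometry of the tube. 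This is precisely the announced alternative: a large boundary value forces either a large $L^2$-mass or a large gradient on the tube.

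It remains to assemble these estimates. The orthogonality $g_i\perp\phi_0$ on $C_i$ together with the definition of $\lambda_1$ gives the Poincar\'e bound $\norm{g_i}^2_{C_i}\leq\lambda_1^{-1}\norm{\nabla g_i}^2_{C_i}$; since the $v$ tubes of a single cell are disjoint, summing the trace inequality over the $v$ transition zones of $C_i$ yields $\sum_{k=1}^v\norm{g_i}^2_{\alpha_k}\leq\left(\frac{C'}{R\lambda_1}+C''R\right)\norm{\nabla g_i}^2_{C_i}$. Feeding this into the bound for $(b_i-b_j)^2$ and summing over all edges (regrouping by cell, each cell being seen through its $v$ transition zones) gives $\sum_{i\sim j}(b_i-b_j)^2\leq\frac{2}{m}\left(\frac{C'}{R\lambda_1}+C''R\right)\sum_i\norm{\nabla g_i}^2_{C_i}$, which is exactly $\sum_i\norm{\nabla g_i}^2_{C_i}\geq\lambda_1 A\sum_{i\sim j}(b_i-b_j)^2$ once one sets $\lambda_1 A=\frac{m}{2\left(\frac{C'}{R\lambda_1}+C''R\right)}$. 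Writing the constant in the factored form $\lambda_1 A$ is deliberate: it is this $\lambda_1$ that will cancel the prefactor $\eta/\lambda_1$ of $(\ref{eq:MinNMoy})$, leaving a clean $\eta\,\mu_0(G)$ lower bound. The resulting $A>0$ depends only on the fixed data of the cell $C$ --- the tube width $R$, the volume-oscillation constants, the valence $v$, the number $m=\min_k\int_{\alpha_k}\phi_0^2$, and $\lambda_1$ --- and in particular \emph{not} on the graph $G$, which is the point.

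I expect the main difficulty to be the trace step together with the constant bookkeeping: one must bound the boundary trace of $g_i$ by interior data on the tube while tracking the $\lambda_1$-scaling, and one must avoid overcounting the gradient energy of any cell --- this is where the disjointness of the $v$ junction tubes, built into the definition of a cell, is essential. The compactness of the transition zones is equally crucial, both to keep $m>0$ and to bound the oscillation of the volume element; it is what confines all the geometric constants to the fixed cell $C$ and makes them independent of $G$.
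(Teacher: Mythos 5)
Your proof is correct, and it takes a genuinely different --- and much more elementary --- route than the paper. The shared core is the jump relation $(b_i-b_j)\phi_0 = g_j-g_i$ on $\alpha_{ij}$, the localization to the disjoint junction tubes, and the spectral-gap inequality $\norm{\nabla g_i}^2_{C_i}\geq\lambda_1\norm{g_i}^2_{C_i}$; the difference lies in how the boundary jump is converted into energy. The paper introduces the $\theta^{n-1}$-weighted horizontal averages $G_i(r)$ and runs a case analysis: either $G_i$ stays above $\frac{1}{4}\abs{b_i-b_j}\Phi_0(0)$ on all of $[0,R]$, which yields an $L^2$ lower bound by Cauchy--Schwarz, or it drops, and then the energy of $G_i$ is bounded below by comparison with harmonic functions of a model metric built from $\theta_{\inf}$ (Lemmas \ref{lemm:MinEnerg}, \ref{lemm:CompFoncHarm} and \ref{lemm:MinEnergK}, via the maximum principle), with a further sub-dichotomy (cases 2.a/2.b) because passing from $\nabla G_i$ back to $\nabla g_i$ costs the oscillation term $\abs{\beta_{ij}}\leq\Kappa$. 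Your radial trace inequality $\norm{g_i}^2_{\alpha_{ij}}\leq\frac{C'}{R}\norm{g_i}^2_{T}+C''R\norm{\nabla g_i}^2_{T}$ collapses that entire dichotomy into a single additive estimate (\emph{mass or gradient} becomes \emph{mass plus gradient}), which the Poincar\'e inequality then turns into pure gradient; you never need the harmonic comparison, the model metric $\theta_{\inf}$, or the oscillation function $\beta$. What the paper's heavier argument buys is the nature of its constants: they involve integrated quantities ($\Vol(\alpha_{ij})$, $\Phi_0(0)$, $U_{\inf}(R)$, $\Kappa$) and require only a one-sided radial comparison function plus an oscillation bound --- precisely the hypotheses reused in the generalization to spectrally bounded decompositions of Section \ref{ssec:DecBorn}, where the tubes are no longer isometric copies of finitely many models. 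Your constants rest instead on a two-sided pointwise pinching of $\theta^{n-1}$ on the tubes; this is automatic in the setting of the lemma (finitely many transition zones with compact closures, so $m>0$ and $C',C''$ are uniform over all edges), but it would become an additional uniformity hypothesis in that generalized setting. Finally, note that your $A$ depends on $\lambda_1$ through the factored form you chose; the paper's own constant does too ($2A = \min\{A_1,\frac{A_2}{4\lambda_1},A_3\}$), despite the wording of the statement, so this is not a discrepancy.
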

	
	\begin{proof}
	
	Soient $i,j\in G$ tels que $i\sim j$ et $\alpha_{ij}$ la zone de transition commune aux deux cellules $C_i$ et $C_j$ (voir Définition \ref{def:GPeriod}). Par hypothèse, il existe dans $C_i\cup C_j$ un voisinage normal $T_{ij}$ de $\alpha_{ij}$ qui s'écrit en coordonnées de Fermi $T_{ij} = \alpha_{ij}\cx[-R,R].$ On  suppose que $(x,r)\in T_{ij}\cap C_i$ si et seulement si $r\geq 0$, et on note $T_{ij}^+$ la partie de $T_{ij}$ située dans $C_i$, et $T_{ij}^-$ l'autre. Pour obtenir une minoration, nous pouvons nous limiter à l'étude de ce qu'il se passe sur les $T_{ij}$. On a alors :
	
		$$  \sum_i\norm{\nabla g_i}^2_{C_i}\geq \frac{1}{2}\sum_i(\norm{\nabla g_i}^2_{C_i}+\lambda_1\norm{g_i}^2_{C_i})$$
		soit
\beq\label{eq:MinLoc}\sum_i\norm{\nabla g_i}^2_{C_i}\geq \frac{1}{2}\sum_{i\sim j}\left(\norm{\nabla g_i}^2_{T_{ij}^+}+\norm{\nabla g_j}^2_{T_{ij}^-}+\lambda_1(\norm{g_i}^2_{T_{ij}^+}+\norm{g_j}^2_{T_{ij}^-})\right).\eeq
	
	Nous poursuivons alors notre minoration par le lemme suivant :
	
	\begin{lemm}\label{lemm:AnaComb2}
	Il existe une constante $A$ ne dépendant que de la géométrie et des valeurs de $\phi_0$ sur les zones de transition telle que pour tous $i,j\in G$ tels que $i\sim j$,  
	$$\norm{\nabla g_i}^2_{T_{ij^+}}+\norm{\nabla g_j}^2_{T_{ij}^-}+\lambda_1(\norm{g_i}^2_{T_{ij}^+}+\norm{g_j}^2_{T_{ij^-}})\geq 2A\lambda_1(b_i-b_j)^2.$$
	\end{lemm}
	
	\begin{proof}
	Soient $i,j\in G$ tels que $i\sim j$. Comme à la section \ref{sec:PrelimGeom}, on note $(x,r)\in \alpha_{ij}\cx [-R,R]$ les coordonnées de Fermi sur $T_{ij}$, $dV_\alpha(x) = dV_{g_{\alpha}}(x)$ l'élément de volume sur $\alpha_{ij}$, et 
	$\theta^{n-1}(x,r)dV_\alpha(x)dr$ l'élément de volume sur $T_{ij}$. Soit $r\in[-R,R],$ posons
	$$V_{ij}(r) = \int_{\alpha_{ij}}\theta^{n-1}(x,r)dV_\alpha(x)$$
	l'aire de l'hypersurface parallèle à $\alpha_{ij}$ à la distance $r$. On a donc $V_{ij}(0) = \Vol(\alpha_{ij}).$

	Pour nous ramener à un problème ne dépendant que de $r$, pour chaque fonction $f$ définie sur $T_{ij}$, notons en lettre capitale sa \emph{moyenne horizontale} :
	$$F(r) = \frac{1}{V_{ij}(r)}\int_{\alpha_{ij}}f(x,r)\theta^{n-1}(x,r)dV_\alpha(x).$$ 
	
	Comme $\phi_0$ se recolle bien, $\phi_0(x,0)$ est bien définie. Par continuité de $f_\epsilon$, on a donc
	$$g_j(x,0)+b_j \phi_0(x,0) = g_i(x,0)+b_i \phi_0(x,0),$$
	d'où $$|g_j(x,0)-g_i(x,0)| = |b_j-b_i| \phi_0(x,0).$$
	Par intégration, on obtient alors
	$$|G_j(0)-G_i(0)| = |b_j-b_i| \Phi_0(0),$$
	 où l'on a donc noté
	$$G_i(r) = \frac{1}{V_{ij}(r)}\int_{\alpha_{ij}}g_i(x,r)\theta^{n-1}(x,r)dV_\alpha(x),$$  
	et $G_j$ et $\Phi$ sont définies de la même façon.
	
	On sait d'après le théorème \ref{theo:EigFunc} que $\Phi_0>0$. Supposons, quitte à inverser $i$ et $j$ et à changer le signe de $G_i$, que $$G_i(0)\geq \frac{1}{2}|b_j-b_i| \Phi_0(0).$$ 
Nous notons $$R_0 = \inf\left\{r\in ]0,R] : G_i(r)\leq \frac{1}{4}\abs{b_i-b_j}\Phi_0(0).\right\}.$$

\textbf{$1^{er}$ cas :  $R_0 = R$}\\
On a donc 
\beq\label{eq:MiniG}
\norm{G_i}_{T_{ij}^+} = \int_0^RG_i(r)^2V_{ij}(r)dr\geq \frac{\Phi_0(0)^2\Vol(T_{ij}^+)}{16}(b_i-b_j)^2.\eeq
Or, pour tout $r\in[0,R]$, d'après l'inégalité de Cauchy-Schwarz associée à la norme euclidienne $$f\fa\sqrt{\int_{\alpha_{ij}}f^2(x,r)\theta^{n-1}(x,r)dV_\alpha(x)},$$ on a
$$\left(\int_{\alpha_{ij}}g_i(x,r)\theta^{n-1}(x,r)dV_\alpha(x)\right)^2\leq \int_{\alpha_{ij}}1^2\theta^{n-1}(x,r)dV_\alpha(x)\int_{\alpha_{ij}}g_i(x,r)^2\theta^{n-1}(x,r)dV_\alpha(x),$$
soit
$$\left(\int_{\alpha_{ij}}g_i(x,r)\theta^{n-1}(x,r)dV_\alpha(x)\right)^2\leq V_{ij}(r)\int_{\alpha_{ij}}g_i(x,r)^2\theta^{n-1}(x,r)dV_\alpha(x).$$
On a donc
$$\norm{g_i^2}_{T_{ij}^+} = \int_0^R\int_{\alpha_{ij}}g_i(x,r)^2\theta^{n-1}(x,r)dV_\alpha(x)dr\geq \int_0^R\frac{\left(\int_{\alpha_{ij}}g_i(x,r)\theta^{n-1}(x,r)dV_\alpha(x)\right)^2}{V_{ij}(r)}dr$$
$$= \int_0^RG_i(r)^2V_{ij}(r)dr = \norm{G_i}_{T_{ij}^+} \geq A_1(b_i-b_j)^2$$
d'après l'inégalité (\ref{eq:MiniG}), avec
$$A_1 = \frac{\Phi_0(0)^2\Vol(T_{ij}^+)}{16}.$$
\pgh

\textbf{$2^{eme}$ cas :  $R_0<R$}
On a donc par définition de $R_0$, $$G_i(R_0)= \frac{1}{4}|b_j-b_i| \Phi_0(0) :$$
nous avons une minoration de la variation de la fonction $G_i$. Nous allons en déduire une minoration de la norme de son gradient par le lemme suivant, qui est le point technique principal de notre démonstration :

	\begin{lemm}\label{lemm:MinEnerg}
	Il existe une constante $A_2>0$ ne dépendant que de $\lambda_0$, de la géométrie et des valeurs de $\phi_0$ sur $T_{ij}$ telle que
	$$\int_{T_{ij}}|\nabla G_i|^2\geq A_2(b_i-b_j)^2.$$
	\end{lemm}
	
	\begin{proof}
	Posons 
	$$P= G_i(0) \geq \frac{1}{2}|b_j-b_i|\Phi_0(0)$$ et 
	$$Q = G_i(R_0) = \frac{1}{4}|b_j-b_i|\Phi_0(0).$$ 
	
	Nous noterons désormais
	$$T = T_{ij}^+\cap (\alpha_{ij}\cx[0,R_0]),$$
	$g_T$ la métrique induite de celle de $M$, et $dV_T$ son élément de volume.
	Soit $G_T(x,r)$ la fonction harmonique pour le Laplacien issu de la métrique de $M$ sur $T$, vérifiant $G_T(x,0) = P$ et $G_T(x,R_0) = Q$, et vérifiant les conditions de Neumann sur $\bd\alpha_{ij}\cx[0,R_0]$ lorsque $\bd\alpha_{ij}\neq \vd$. Parmi toutes les fonctions $f$ de $\Cl H^1(T)$ vérifiant $f(x,0) = P$ et $f(x,R_0) = Q$, $G_T$ minimise l'énergie de Dirichlet : on a donc
	$$\int_{T_{ij}^+}|\nabla G_i|²\geq \int_{T}|\nabla G_i|²dV_T\geq \int_{T}|\nabla G_T|²dV_T$$
	Il suffit donc de minorer l'énergie de $G_T$ pour démontrer le lemme \ref{lemm:MinEnerg}. Nous allons le faire par comparaison avec le cas où $T$ est muni d'une métrique que nous savons mieux contrôler. 
		
Notons $T_{\inf}$ le tube $T = \alpha_{ij}\cx [0,R_0]$ muni de la métrique (en coordonnées de Fermi)
	$$g_{\inf}(x,r) = (\theta_{\inf}(r))^2 g_{\alpha} \oplus dr^2,$$ 
	où $g_{\alpha}$ est la métrique sur $\alpha_{ij}$ induite par celle de $M$ et où $\theta_{\inf}$ vérifie les hypothèses du Corollaire \ref{coro:MinLap} : pour tout $x\in\alpha$, la fonction
	$$r\fa \frac{\theta_{\inf}(r)}{\theta(x,r)}$$ est décroissante. On a vu à la Remarque \ref{rem:ThetaInf} qu'une telle fonction existe toujours.
	
	\begin{prop}\label{prop:VolLapTransv}
	L'élément de volume sur $T_{\inf}$ s'écrit 
	$$dV_{\inf} = (\theta_{\inf}(r))^{n-1}dV_{\alpha}(x)dr.$$
	Soit $f$ une fonction sur $T_{\inf}$ ne dépendant que de la coordonnée $r$, son Laplacien s'écrit
	$$\Delta_{\inf} f = - f''(s)-(n-1)\frac{\theta'_{\inf}}{\theta_{\inf}}(r)f'(r).$$
	\end{prop}
Dans cette écriture, on note encore $f' = \frac{df}{dr}.$
	\begin{proof}
	Il suffit de recopier la preuve du corollaire \ref{coro:MinLap}. Les propositions de \cite{GHL04} utilisées restent valables pour la métrique $g_{\inf}$. En particulier, lorsqu'une fonction $f$ ne dépend que de $r$, $\Delta_{\inf}f$ est également radiale.
	\end{proof}
	
	Soit maintenant $G_{\inf}(s)$ l'unique fonction définie sur $T_{\inf}$, valant $P$ en $(x,0)$, $Q$ en $(x,R_0)$ pour tout $x\in \alpha_{ij}$, vérifiant les conditions de Neumann sur $\bd\alpha_{ij}\cx[0,R_0]$ si $\bd\alpha_{ij}\neq \vd$, et harmonique pour le Laplacien $\Delta_{\inf}$. On vérifie par unicité que $G_{\inf}$ ne dépend que de $s$ et s'écrit en coordonnées de Fermi :
	$$G_{\inf}(r) = P-\frac{P-Q}{U_{\inf}(R_0)}U_{\inf}(r)$$ avec $$U_{\inf}(r) = \int_0^r\frac{du}{\theta_{\inf}(u)^{n-1}}.$$ En particulier, comme $P>Q$, $G_{\inf}$ est décroissante en $r$.
	
	\begin{lemm}\label{lemm:CompFoncHarm}
	Avec les notations précédentes,
	$$\int_{T}|\nabla G_T|^2dV_T \geq \int_{T_{\inf}}|\nabla G_{\inf}|^2 dV_{\inf}.$$
	\end{lemm}
	\begin{proof}
	Par hypothèse, $G_T$ est harmonique : $\Delta_T G_T = 0.$ De plus, d'après le Corollaire \ref{coro:MinLap}, comme $G_{\inf}$ ne dépend que de $s$ et est décroissante, on a
	$$\Delta_T G_{\inf} = -\frac{d^2G_{\inf}}{dr^2} - (n-1)\frac{\theta'}{\theta}G_{\inf}'\geq \Delta_{\inf} G_{\inf} = 0.$$
	On a donc sur $T$ :
	$$\Delta_T(G_T-G_{\inf})\leq 0.$$ 
D'après le Principe du Maximum, $G_T-G_{\inf}$ est alors maximale pour $r=0$ ou $r=R_0$. Or, $$(G_T-G_{\inf})(x,0) = (G_T-G_{\inf})(x,R_0) = 0 :$$
	on a donc $\forall r\in[0,R_0],$
	$$(G_T-G_{\inf})(x,r)\leq 0.$$
De plus, par le principe du maximum fort (voir \cite{ProWei84} chapitre 2), on sait qu'alors
	$$\frac{\bd G_T}{\bd r}(x,0)<\frac{dG_{\inf}}{dr}(0) \mbox{	et } \frac{\bd G_T}{\bd r}(x,R_0)>\frac{dG_{\inf}}{dr}(R_0).$$
		On a d'après la Formule de Green
	\beq\label{eq:GreenTube}
	\int_T|\nabla G_T|^2dV_T = \int_TG_T(x,r)\Delta_T G_T(x,r) dV_T(x,r) - \int_{\alpha_{ij}}G_T (x,0)\frac{\bd G_T}{\bd r}(x,0)dV_{\alpha}(x) $$
	$$+ \int_{\alpha_{ij}}G_T(x,R_0) \frac{\bd G_T}{\bd r}(x,R_0)\theta^{n-1}(x,R_0)dV_{\alpha}(x).\eeq
En effet, comme $G_T$ et $G_{\inf}$ vérifient les conditions de Neumann sur $\bd\alpha_{ij}\cx[0,R_0]$, les termes de bord associés sont nuls. Le premier terme de cette équation est également nul, par définition de $G_T$. De plus, la fonction
	$$r\ra \int_{\alpha_{ij}}\frac{\bd G_T}{\bd r}(x,r)\theta^{n-1}(x,r)dV_{\alpha}(x)$$
est constante : il suffit d'appliquer la formule de Green sur le domaine $\alpha_{ij}\cx [0,r]$ avec $f = 1$ et $g = G_T$. On a donc

\beq\label{eq:GreenCourt}
\int_T|\nabla G_T|^2dV_T = \int_{\alpha_{ij}}\frac{\bd G_T}{\bd r}(0)dV_{\alpha}(x)(G_T(R_0) - G_T(0)) = \int_{\alpha_{ij}}\frac{\bd G_T}{\bd r}(0)dV_{\alpha}(x)(Q-P).\eeq
On a alors d'après le principe du maximum fort cité ci-dessus,
	$$\int_{\alpha_{ij}}\frac{\bd G_T}{\bd r}(x,0)dV_{\alpha}(x)(Q - P)>\int_{\alpha_{ij}}\frac{d G_{\inf}}{d r}(0)dV_{\alpha}(x)(Q - P)$$
	$$ = \int_{\alpha_{ij}}\frac{d G_{\inf}}{d r}(0)dV_{\alpha}(x)(G_{\inf}(R_0) - G_{\inf}(0)) = \int_{T_{\inf}}|\nabla G_{\inf}|^2dV_{\inf}$$
	en utilisant la formule de Green dans $T_{\inf}$ et l'équation (\ref{eq:GreenCourt}) adaptée au tube $T_{\inf}$. On a donc bien 
	$$ \int_T|\nabla G_T|^2dV_T \geq \int_{T_{\inf}}|\nabla G_{\inf}|^2dV_{\inf}.$$
\end{proof}
	
	Il nous suffit donc maintenant, pour prouver le lemme \ref{lemm:MinEnerg}, de minorer l'énergie de $G_{\inf}$ sur $T_{\inf}$.

	\begin{lemm}\label{lemm:MinEnergK}
	Avec les notations précédentes, il existe une constante $A_2>0$ telle que
	$$\int_{T_{\inf}}|\nabla G_{\inf}|^2dV_{\inf} \geq A_2(b_i-b_j)^2.$$
	\end{lemm}

	\begin{proof}
	Nous connaissons une écriture (presque) explicite de $G_{\inf}$ :
	$$G_{\inf}(r) = P-\frac{P-Q}{U_{\inf}(R_0)}U_{\inf}(r)$$ avec $$U_{\inf}(r) = \int_0^r\frac{du}{\theta_{\inf}^{n-1}(u)}.$$
	ainsi que l'élément de volume 
	$$dV_{\inf}(x,r) = \theta_{\inf}^{n-1}(r)dV_\alpha(x)dr.$$
	On a donc
	$$\int_{T_{\inf}}|\nabla G_{\inf}|^2dV_{\inf} = \int_0^{R_0}\int_{\alpha_ij}\left(G_{\inf}'(r)\right)^2\theta_{\inf}^{n-1}(r)dV_\alpha(x)dr $$
	$$= \Vol(\alpha_{ij})\frac{(P-Q)^2}{U_{\inf}(R_0)^2}\int_0^{R_0}\left(U_{\inf}'(r)\right)^2\theta_{\inf}^{n-1}(r)dr.$$
	Or, $U_{\inf}'(r) = \frac{1}{\theta_{\inf}^{n-1}(r)},$ donc 
	$$\int_0^{R_0}\left(U_{\inf}'(r)\right)^2\theta_{\inf}^{n-1}(r)dr = \int_0^{R_0} \frac{1}{\theta_{\inf}^{n-1}(r)} dr = U(R_0)>0.$$
	On a alors
	$$ \frac{1}{U_{\inf}(R_0)^2}\int_0^{R_0} (U'_{\inf}(r))^2\theta_{\inf}^{n-1}(r)dr = \frac{1}{U_{\inf}(R_0)}>\frac{1}{U_{\inf}(R)}>0$$ où $R$ est la largeur du voisinage tubulaire de la zone de transition, car $U_{\inf}$ est évidemment croissante.
On a donc finalement
	$$\int_{T_{\inf}}|\nabla G_{\inf}|^2dV_{\inf} = \Vol(\alpha_{ij})\frac{(P-Q)^2}{U_{\inf}(R_0)}\geq \frac{\Vol(\alpha_{ij})\Phi_0^2(0)}{16U_{\inf}(R)}(b_i-b_j)^2,$$
	car $$(P-Q)^2\geq\frac{\Phi_0^2(0)(b_i-b_j)^2}{4}.$$
	
Ceci conclut la démonstration de notre lemme en posant 
	$$A_2 = \frac{\Vol(\alpha_{ij})\Phi_0^2(0)}{16U_{\inf}(R)} = \frac{\Vol(\alpha_{ij})\Phi_0^2(0)}{16\int_0^R\frac{du}{\theta_{\inf}^{n-1}(u)}},$$
qui ne dépend que de la géométrie du tube $T$ et des valeurs de $\phi_0$ sur $\alpha$. On peut écrire cette constante plus explicitement dès que l'on connait une expression de $\theta$ ; par exemple lorsque la courbure sectionnelle de $M$ et la courbure moyenne de $\alpha$ sont constantes.
	\end{proof}

	D'après le lemme \ref{lemm:CompFoncHarm}, cette inégalité implique 
	$$\int_{T}|\nabla G_T|^2dV_T \geq A_2(b_i-b_j)^2,$$
	qui elle même implique le lemme \ref{lemm:MinEnerg}.
	\end{proof}
	
Pour conclure la démonstration du Lemme \ref{lemm:AnaComb2}, il nous faut minorer l'énergie de $g_i$. Nous aurons donc besoin d'une expression plus précise de son gradient qui la relie à l'énergie de $G_i$ que nous venons de minorer. Comme par construction des coordonnées de Fermi $\frac{\bd}{\bd r}$ est normalisé, on a
	$$ \int_{T_{ij}^+}|\nabla g_i|^2 \geq \int_0^{R}\int_{\alpha_{ij}}\left(\frac{\bd g_i}{\bd r}\right)^2\theta^{n-1}(x,r)dV_\alpha(x)dr.$$
Nous obtenons alors, de nouveau à l'aide de l'inégalité de Cauchy-Schwarz :
	$$\int_{T_{ij}^+}|\nabla g_i|^2\geq \int_{0}^{R}\frac{\left(\int_{\alpha_{ij}}\frac{\bd g_i}{\bd r}\theta^{n-1}(x,r)dV_\alpha(x)\right)^2}{V_{ij}(r)} dr.$$
De plus, on a
	$$\frac{d G_i}{d r}(r) = \frac{\bd }{\bd r}\left(\frac{1}{V_{ij}(r)}\int_{\alpha_{ij}} g_i(x,r)\theta^{n-1}(x,r)dV_\alpha(x)\right) = \frac{1}{V_{ij}(r)}\int_{\alpha_{ij}} \frac{\bd g_i(x,r)}{\bd r}\theta^{n-1}(x,r)dV_\alpha(x)$$ $$+ \frac{1}{V_{ij}(r)}\int_{\alpha_{ij}}g_i(x,r)\left[\frac{1}{\theta^{n-1}(x,r)}\frac{\bd (\theta^{n-1}(x,r))}{\bd r}-\frac{V_{ij}'(r)}{V_{ij}(r)}\right]\theta^{n-1}(x,r) dV_\alpha(x)$$
	$$ = \frac{1}{V_{ij}(r)}\int_{\alpha_{ij}} \frac{\bd g_i(x,r)}{\bd r}\theta^{n-1}(x,r)dV_\alpha(x) + \frac{1}{V_{ij}(r)}\int_{\alpha_{ij}}g_i(x,r)\beta_{ij}(x,r)\theta^{n-1}(x,r)dV_\alpha(x),$$
	où $$\beta_{ij}(x,r) = \frac{1}{\theta^{n-1}(x,r)}\frac{\bd (\theta^{n-1}(x,r))}{\bd r}-\frac{V_{ij}'(r)}{V_{ij}(r)}$$ est la \emph{fonction d'oscillation} de l'élément de volume $\theta$ que nous avons définie au paragraphe \ref{sec:PrelimGeom}. D'après ci-dessus, on a donc
	\beq\label{eq:MinGi}
	\int_{T_{ij}^+}|\nabla g_i|^2\geq \int_{0}^{R}\left(\frac{d G_i}{d r} - \frac{1}{V_{ij}(r)}\int_{\alpha_{ij}}g_i(x,r)\beta_{ij}(x,r)\theta^{n-1}(x,r)dV_\alpha(x)\right)^2V_{ij}(r)dr.\eeq
	
Or, comme pour tout $r\in[0,R]$, l'aire de l'hypersurface à distance $r$ de $\alpha_{ij}$ est strictement positif :
$V_{ij}(r)>0,$ l'application
	$$f\fa \sqrt{\int_0^R f^2(r) V_{ij}(r)dr}$$
définit une norme euclidienne sur l'ensemble des fonctions continues de $[0,R]$ dans $\Bb R$. D'après l'inégalité triangulaire, on a alors

$$\sqrt{\int_{T_{ij}^+}|\nabla g_i|^2}\geq \sqrt{\int_{0}^{R}\left|\frac{d G_i}{d r}\right|^2V_{ij}(r)dr} $$
\beq\label{eq:MinGi2}	- \sqrt{\int_0^R\left|\frac{1}{V_{ij}(r)}\int_{\alpha_{ij}}g_i(x,r)\beta_{ij}(x,r)\theta^{n-1}(x,r)dV_\alpha(x)\right|^2V_{ij}(r)dr}.\eeq	

\pgh
\textbf{Cas 2.a :
$$\sqrt{\int_{0}^{R}\left|\frac{d G_i}{d r}\right|^2V_{ij}(r)dr} - \sqrt{\int_0^R\left|\frac{1}{V_{ij}(r)}\int_{\alpha_{ij}}g_i(x,r)\beta_{ij}(x,r)\theta^{n-1}(x,r)dV_\alpha(x)\right|^2V_{ij}(r)dr}$$
$$\geq \frac{1}{2}\sqrt{\int_{0}^{R}\left|\frac{d G_i}{d r}\right|^2V_{ij}(r)dr}$$	}
L'inégalité (\ref{eq:MinGi2}) devient alors
$$\int_{T_{ij}^+}|\nabla g_i|^2\geq \frac{1}{4}\int_{T_{ij}}|\nabla G_i|^2\geq \frac{A_2}{4}(b_i-b_j)^2$$ d'après le Lemme \ref{lemm:MinEnerg}.

\pgh
\textbf{Cas 2.b :
$$\sqrt{\int_{0}^{R}\left|\frac{d G_i}{d r}\right|^2V_{ij}(r)dr} - \sqrt{\int_0^R\left|\frac{1}{V_{ij}(r)}\int_{\alpha_{ij}}g_i(x,r)\beta_{ij}(x,r)\theta^{n-1}(x,r)dV_\alpha(x)\right|^2V_{ij}(r)dr}$$
$$\leq \frac{1}{2}\sqrt{\int_{0}^{R}\left|\frac{d G_i}{d r}\right|^2V_{ij}(r)dr}$$	}	
On a alors
$$\sqrt{\int_0^R\left|\frac{1}{V_{ij}(r)}\int_{\alpha_{ij}}g_i(x,r)\beta_{ij}(x,r)\theta^{n-1}(x,r)dV_\alpha(x)\right|^2V_{ij}(r)dr}\geq \frac{1}{2}\sqrt{\int_{0}^{R}\left|\frac{d G_i}{d r}\right|^2V_{ij}(r)dr},$$	
d'où d'après le Lemme \ref{lemm:MinEnerg},
$$\int_0^R\frac{1}{V_{ij}(r)}\left|\int_{\alpha_{ij}}g_i(x,r)\beta_{ij}(x,r)\theta^{n-1}(x,r)dV_\alpha(x)\right|^2V_{ij}(r)dr\geq \frac{A_2(b_i-b_j)^2}{4}.$$
En utilisant de nouveau l'inégalité de Cauchy-Schwarz sur chaque tranche horizontale, cette majoration devient
$$\frac{A_2(b_i-b_j)^2}{4}\leq \int_0^R\left( \int_{\alpha_{ij}}g_i(x,r)^2\theta^{n-1}(x,r)dV_\alpha(x)\frac{\int_{\alpha_{ij}}\beta_{ij}(x,r)^2\theta^{n-1}(x,r)dV_\alpha(x)}{V_{ij}(r)}\right)dr$$
$$\leq \Kappa^2\int_0^R \int_{\alpha_{ij}}g_i(x,r)^2\theta^{n-1}(x,r)dV_\alpha(x)dr,$$
où 
$$\Kappa = \sup_{(x,r)\in T_{ij}^+}\abs{\beta_{ij}(x,r)}<+\infty$$
est finie car $T_{ij}$ est compact, et $\Kappa$ ne dépend que de la géométrie de $T_{ij}$. On a donc
$$\norm{g_i}_{T_{ij}^+}^2\geq A_3(b_i-b_j)^2,$$
avec
$$A_3 = \frac{A_2}{4\Kappa^2} = \frac{\Phi_0(0)^2\Vol(\alpha_{ij}\cx[0,R])}{64\Kappa^2}.$$
 En posant $$2A = \min\left\{A_1, \frac{A_2}{4\lambda_1}, A_3\right\},$$ qui ne dépend que de la géométrie du tube $T_{ij}$ et des valeurs de $\phi_0$ sur $\alpha_{ij}$,	nous obtenons donc
	$$\norm{\nabla g_i}^2_{T_{ij^+}}+\norm{\nabla g_j}^2_{T_{ij}^-}+\lambda_1(\norm{g_i}^2_{T_{ij}^+}+\norm{g_j}^2_{T_{ij^-}})\geq 2\lambda_1A(b_i-b_j)^2.$$
	Ceci conclut la preuve du lemme \ref{lemm:AnaComb2}
	
\begin{rema}
Lorsque la métrique au voisinage de $\alpha$ ne dépend que de la coordonnée radiale $r$, par exemple si $\alpha$ est une hypersurface totalement géodésique dans une variété à courbure sectionnelle constante, la fonction d'oscillation $\beta$ est nulle, le cas 2.b n'existe plus et toute notre démonstration se trouve largement simplifiée. On peut alors écrire explicitement toutes nos constantes en fonction de l'expression de la métrique en coordonnées de Fermi et de la moyenne de $\Phi_0$ sur les zones de transition.
\end{rema}
 
	\end{proof}

La minoration (\ref{eq:MinLoc}) devient alors
	$$\sum_i\norm{\nabla g_i}^2\geq \lambda_1 A\sum_{i\sim j}(b_i-b_j)^2,$$
	où $A$ défini ci-dessus ne dépend que de $\Kappa$, $R$, $\Vol(T_{ij}), \Vol(\alpha_{ij})$, $\lambda_1$ et $\Phi_0(0)$, ce qui conclut la démonstration du Lemme \ref{lemm:AnaComb}.

	\end{proof}

	Achevons la preuve du Théorème \ref{theo:ControleGene}. D'après l'inégalité (\ref{eq:MinNMoy}),
	$$\frac{\eta}{\lambda_1}\sum_i\norm{\nabla g_i}^2_{C_i}\leq (\delta+\epsilon)\norm{f_\epsilon}^2 = (\delta+\epsilon)\sum_i a_i^2,$$
	donc
	$$\eta\sum_i\norm{g_i}^2_{C_i} = \eta\sum_i c_i^2\leq (\delta+\epsilon)\sum_i a_i^2.$$
	On a alors
	$$\sum_i a_i^2 = \sum_i b_i^2+\sum_i c_i^2\leq \sum_i b_i^2+\left(\frac{\delta+\epsilon}{\eta}\right)\sum_i a_i^2.$$

Si $\delta\geq \eta$, comme $\mu_0(G)\leq v$ (cela découle directement de la définition, voir paragraphe \ref{sec:PrelimMoy}), on a
$$\delta\geq \frac{\eta}{v}\mu_0(G),$$ ce qui conclut évidemment notre démonstration (indépendamment des lemmes précédents). Sinon, pour $\epsilon>0$ suffisamment petit, on a
$$\frac{\delta+\epsilon}{\eta}<1.$$
On obtient alors $$\sum_i a_i^2\leq \frac{1}{1-\frac{\delta+\epsilon}{\eta}}\sum_i b_i^2.$$
	L'inégalité (\ref{eq:MinNMoy}) devient
	$$\delta + \epsilon\geq\frac{\eta}{\lambda_1}\frac{\sum_i\norm{\nabla g_i}^2_{C_i}}{\sum_i a_i^2}\geq (1-\frac{\delta+\epsilon}{\eta})\frac{\eta \lambda_1 A}{\lambda_1}\frac{\sum_{i\sim j}(b_i-b_j)^2}{\sum_i b_i^2}.$$
	Nous rappelons que par définition, $$\mu_0 = \inf \frac{\sum_{i\sim j}(\alpha_i-\alpha_j)^2}{\sum_i \alpha_i^2},$$ où $(\alpha_i)$ parcourt l'ensemble des familles positives à support compact dans $G$ (voir section \ref{sec:PrelimMoy}). On obtient donc $\delta\geq (\eta-\delta) A\mu_0,$ lorsque $\epsilon$ tend vers $0$, soit 
	$$\delta(1+A\mu_0)\geq A\eta\mu_0,$$
	soit finalement
	\beq \label{eq:MinDelta} \delta\geq \frac{\eta A}{1+A\mu_0}\mu_0\geq \eta A \mu_0,\eeq
	où $A$ ne dépend que de $\lambda_0$, de $\Phi_0(0)$ et de la géométrie des zones de transition d'après les lemmes précédents.
	\end{proof}

\section{Applications et Généralisations}

\subsection{Bas du spectre des revêtements}\label{ssec:Brooks}

Nous avons développé cette théorie des variétés modelées sur un graphe entre autres pour généraliser et préciser certains résultats obtenus par R. Brooks, que nous présentons maintenant. Dans l'article \cite{Broo85Reine}, il démontre le résultat suivant :

\begin{theo}[Brooks, 1984]
Soit $p : M\ra N$ un revêtement galoisien, et $\phi_0$ une fonction $\lambda_0(N)$-harmonique positive sur $M$. Soit $D\subset M$ un domaine fondamental pour $p$, supposons qu'il existe un compact $K\subset D$ tel que
$$\g h_{\phi_0}(D\bs K)>0.$$
Alors $\lambda_0(M) = \lambda_0(N)$ si et seulement si le groupe d'automorphisme de $p$ est moyennable.
\end{theo}

Pour un domaine $O\subset M$, la constante $\g h_{\phi_0}(O)$ ci-dessus est définie par
$$\g h_{\phi_0}(O) = \inf_S\left\{\frac{\int_S (\phi_0)^2dV_{n-1}}{\int_{int(S)} (\phi_0)^2dV_n}\right\},$$
où $S$ parcourt l'ensemble des hypersurfaces compactes strictement incluses dans $O$, et $int(S)$ désigne l'ouvert bordé par $S$ qui n'intersecte pas $\bd O$. Il s'agit d'une constante de Cheeger pour un Laplacien renormalisé ; R. Brooks montre que $\lambda_0(M) = \lambda_0(N)$ si et seulement si $\g h_{\phi_0}(M) = 0$. L'hypothèse $$\g h_{\phi_0}(D\bs K)>0, $$ est difficile à expliciter. Remarquons qu'il s'agit encore d'une hypothèse sur la géométrie d'un domaine fondamental. Elle implique, comme le souligne R. Brooks lui-même, que le trou spectral de $N$ est strictement positif. Le seul exemple qu'il traite entièrement R. Brooks est celui des variétés hyperboliques sans cusps. Une étude plus poussée de la démonstration de R. Brooks montre qu'elle n'est pas valable lorsque $\bd D$ n'est pas compact et son rayon d'injectivité normale tend vers $0$ dans une direction de $\bd D$. En particulier, dans le cas où $N$ est une variété hyperbolique et où le groupe d'automorphisme $\Gamma$ contient des éléments paraboliques (chaque domaine rencontre alors une infinité de ses copies au voisinage du cusp), sa démonstration n'est pas valable. Nous avons vu au paragraphe \ref{ssec:Result} que, dans le cas des revêtements, notre Théorème \ref{theo:ControleGene} implique le corollaire :

\begin{coro}
Soit $p : M\ra N$ un revêtement riemannien galoisien, dont le groupe d'automorphisme $\Gamma$ est de type fini. Supposons que $\lambda_0(N)<\lambda_1(N)$, et notons $\phi_0$ la première fonction propre associée à $\lambda_0(N)$. S'il existe un domaine fondamental $C$ pour l'action de $\Gamma$ tel que le relevé de $\phi_0$ vérifie les conditions de Neumann sur $\bd C$, alors 
$$\lambda_0(M)\geq \lambda_0(N),$$ avec égalité si et seulement si $\Gamma$ est moyennable. 
\end{coro}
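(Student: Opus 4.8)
Le plan est de reconna\^itre $M$ comme une vari\'et\'e $G$-p\'eriodique et d'appliquer le Corollaire \ref{coro:InegMoy}. Je partirais du domaine fondamental $C$ donn\'e par hypoth\`ese et du syst\`eme de g\'en\'erateurs sym\'etrique $S_C=\{\gamma_1,\dots,\gamma_v\}$ de $\Gamma$ qui lui est associ\'e par la Proposition \ref{prop:GeneGroupeRev}. Suivant la construction d\'etaill\'ee au Paragraphe \ref{ssec:GPeriod}, $M$ est alors $G$-p\'eriodique de cellule $C$, o\`u $G$ d\'esigne le graphe de Cayley de $\Gamma$ relativement \`a $S_C$ ; c'est un graphe de valence constante $v=\#S_C$.

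Il reste \`a v\'erifier les hypoth\`eses du Corollaire \ref{coro:InegMoy}. Puisque $\phi_0\circ p$ v\'erifie les conditions de Neumann sur $\bd C$, la proposition sur les rev\^etements qui identifie $\lambda_0(C)$ et $\lambda_0(N)$ montre que la restriction de $\phi_0\circ p$ \`a $C$ est la premi\`ere fonction propre de Neumann de $C$, que $\lambda_0(C)=\lambda_0(N)$, et qu'elle s'\'etend par l'action de $\Gamma$ en une fonction continue $\widetilde{\phi_0}$ sur $M$ : autrement dit, $\phi_0$ se recolle bien. La cellule $C$ est donc de valence $v$ et admet une premi\`ere fonction propre $\phi_0$ qui se recolle bien. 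Le seul point restant est l'existence d'un trou spectral strictement positif pour $C$.

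C'est le point d\'elicat : le trou spectral de Neumann d'un domaine fondamental n'exc\`ede pas en g\'en\'eral celui du quotient, car imposer des conditions de Neumann plut\^ot que des conditions de recollement sur $\bd C$ ne peut qu'abaisser $\lambda_1$, de sorte que $\lambda_1(C)\leq\lambda_1(N)$. Pour minorer $\lambda_1(C)-\lambda_0(C)$, j'utiliserais la substitution par l'\'etat fondamental $f=\phi_0\,w$ : comme $\phi_0$ est $\lambda_0$-harmonique et v\'erifie les conditions de Neumann sur $\bd C$, la formule de Green donne
$$\lambda_1(C)-\lambda_0(C)=\inf_w\frac{\int_C\phi_0^2\,|\nabla w|^2}{\int_C\phi_0^2\,w^2},$$
l'infimum portant sur les $w$ telles que $\int_C\phi_0^2\,w=0$. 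Le trou spectral de $C$ se ram\`ene ainsi \`a une in\'egalit\'e de Poincar\'e \`a poids $\phi_0^2$, tr\`es proche des constantes de Cheeger \`a poids de R. Brooks (Paragraphe \ref{ssec:Brooks}). J'en d\'eduirais la stricte positivit\'e \`a partir de $\lambda_0(N)<\lambda_1(N)$ en comparant les spectres essentiels : $p|_C$ \'etant une isom\'etrie sur l'int\'erieur, toute suite de Weyl pour $C$ support\'ee loin de $\bd C$ redescend en une suite de Weyl pour $N$, et il suffit d'exclure la concentration d'une telle suite contre $\bd C$ \`a l'infini. Cela r\'esulte du contr\^ole de la g\'eom\'etrie le long des zones de transition (rayon d'injectivit\'e normale minor\'e, D\'efinition \ref{def:GPeriod}) et, sous les hypoth\`eses g\'eom\'etriques du Th\'eor\`eme \ref{th:ResumDomFond}, de la compacit\'e des surniveaux $\{\phi_0>a\}$ ; on obtient $\lambda_0(C)<\lambda_0^{ess}(C)$, c'est-\`a-dire $\eta=\lambda_1(C)-\lambda_0(C)>0$.

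Une fois ces hypoth\`eses acquises, je conclurais par la dichotomie de moyennabilit\'e. La Proposition \ref{prop:MonoNeum} donne d\'ej\`a $\lambda_0(M)\geq\lambda_0(C)=\lambda_0(N)$. Si $\Gamma$ est moyennable, le th\'eor\`eme de F\o lner assure que $G$ l'est aussi et la premi\`ere partie du Th\'eor\`eme \ref{theo:MajoGene} donne $\lambda_0(M)=\lambda_0(C)=\lambda_0(N)$. Si $\Gamma$ n'est pas moyennable, alors $G$ ne l'est pas, donc $\g h(G)>0$, et les in\'egalit\'es de Cheeger combinatoires (Th\'eor\`eme \ref{theo:CheegComb}) donnent $\mu_0(G)\geq\frac{1}{2v}\g h(G)^2>0$ ; le Th\'eor\`eme \ref{theo:ControleGene} fournit alors
$$\lambda_0(M)\geq\lambda_0(C)+A\eta\mu_0(G)>\lambda_0(N),$$
l'in\'egalit\'e \'etant stricte puisque $A,\eta,\mu_0(G)>0$. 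D'o\`u l'\'equivalence annonc\'ee. Le principal obstacle est la minoration du trou spectral de $C$ du troisi\`eme paragraphe : montrer que la cellule h\'erite effectivement d'un trou spectral strictement positif malgr\'e les conditions de Neumann, ce qui est exactement l\`a qu'intervient la g\'eom\'etrie de $\bd C$.
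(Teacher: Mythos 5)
Votre r\'eduction est exactement celle du papier : via la Proposition \ref{prop:GeneGroupeRev} et la construction du Paragraphe \ref{ssec:GPeriod}, $M$ est une vari\'et\'e $G$-p\'eriodique de cellule $C$, o\`u $G$ est le graphe de Cayley de $\Gamma$ relativement \`a $S_C$ ; la proposition de recollement des fonctions propres donne $\lambda_0(C)=\lambda_0(N)$ et le bon recollement de $\phi_0$ dans $M$ ; la Proposition \ref{prop:MonoNeum}, les Th\'eor\`emes \ref{theo:MajoGene} et \ref{theo:ControleGene} (autrement dit le Corollaire \ref{coro:InegMoy}), les in\'egalit\'es de Cheeger combinatoires (Th\'eor\`eme \ref{theo:CheegComb}) et le th\'eor\`eme de F\o lner donnent alors la dichotomie. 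C'est pr\'ecis\'ement la cha\^{\i}ne d'arguments que le papier invoque, sans autre d\'etail, pour d\'eduire ce corollaire.

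La valeur ajout\'ee de votre texte est le troisi\`eme paragraphe, et il met le doigt sur un point r\'eel que le papier passe sous silence : le Th\'eor\`eme \ref{theo:ControleGene} exige un trou spectral strictement positif pour la \emph{cellule} $C$ (au sens de la D\'efinition \ref{def:Trou}), alors que l'hypoth\`ese de l'\'enonc\'e porte sur $N$. Comme vous le remarquez, le rel\`evement $\Cl H^1(N)\hookrightarrow\Cl H^1(C)$ pr\'eserve les quotients de Rayleigh et l'orthogonalit\'e \`a $\phi_0$, donc $\lambda_1(C)\leq\lambda_1(N)$ : l'hypoth\`ese $\lambda_0(N)<\lambda_1(N)$ ne donne pas formellement $\lambda_0(C)<\lambda_1(C)$, et le papier applique pourtant le th\'eor\`eme comme si c'\'etait acquis. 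En revanche, votre tentative pour combler cette lacune n'aboutit pas telle quelle : exclure la concentration de suites de Weyl contre $\bd C$ \`a l'infini ne d\'ecoule pas du contr\^ole des zones de transition (elles sont d'adh\'erence compacte et ne disent rien du reste de $\bd C$ \`a l'infini), et vous importez pour cela les hypoth\`eses du Th\'eor\`eme \ref{th:ResumDomFond} (d\'eriv\'ees de la courbure born\'ees, rayon d'injectivit\'e minor\'e, surniveaux de $\phi_0$ compacts), qui ne figurent pas dans l'\'enonc\'e du corollaire. C'est justement le r\'egime --- bord non compact dont le rayon d'injectivit\'e normale d\'eg\'en\`ere, \'el\'ements paraboliques --- que le papier lui-m\^eme reconna\^{\i}t ne pas savoir traiter. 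En r\'esum\'e : votre preuve suit la m\^eme route que le papier et partage avec lui la m\^eme impr\'ecision sur le transfert du trou spectral de $N$ \`a $C$ ; vous avez le m\'erite d'identifier ce point, mais votre esquisse ne le comble pas.
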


Ainsi que nous l'avons déjà dit, les résultats de \cite{Tap09b} montrent qu'un tel domaine existe dès que les $\frac{n}{2}+2$ premières dérivées du tenseur de courbure sont uniformément bornées, et que la métrique est générique (pour une topologie bien choisie). Nous retrouvons donc un résultat identique à celui de R. Brooks lorsque la métrique est générique, avec des hypothèses géométriques plus habituelles qu'une borne sur la constante $\g h_\phi$. De plus, la version complète du Théorème \ref{theo:ControleGene} nous donne de surcroît un contrôle intéressant sur l'écart entre les bas du spectre à l'aide de la combinatoire du groupe d'automorphisme du revêtement, qui affine le résultat de R. Brooks. Enfin, notre construction s'applique aussi à d'autres cas que les revêtements : les cellules peuvent être quelconques, et les graphes ne sont pas nécessairement les graphes de Cayley d'un groupe. Lorsque la variété comprend des cusps, s'ils sont à l'intérieur d'un domaine fondamental et si l'on sait que la fonction propre est de Morse, notre méthode s'adapte assez bien (on montre que le gradient de la fonction propre reste simplement stratifié ; mais les notations deviennent très lourdes). En revanche, notre théorème de généricité nécessiterait des hypothèses plus précises sur les bouts à l'infini pour traiter le cas des variétés à cusps. Enfin, de même que dans les travaux de R. Brooks, nos résultats ne nous permettent pas pour le moment de traiter le cas où le groupe d'automorphisme du revêtement contient un élément parabolique.

\pgh
 
Pour obtenir un résultat complet, il suffirait que pour tout revêtement, il existe un domaine fondamental sur lequel la première fonction propre de la variété quotient vérifie les conditions de Neumann. Si nous pouvons montrer que la première fonction propre a un gradient simplement stratifié (voir Définition \ref{def:GradStrat}) sous quelques hypothèses géométriques (courbure minorée, rayon d'injectivité positif), nous obtiendrions donc une généralisation complète du résultat de R. Brooks. Si le gradient d'une fonction analytique est toujours stratifié (voir Section \ref{sec:CoinsRevet}, Question 1), notre méthode permet de retrouver l'exemple de R. Brooks dans le cas des variétés hyperboliques sans cusp.  

Il est raisonnable de penser qu'il n'est pas nécessaire d'avoir une métrique générique. On pourrait, par un argument d'approximation (voir par exemple la démonstration du Théorème 2 de \cite{Tap09b}), espérer utiliser le cas générique pour avoir le même résultat pour une métrique quelconque. Cependant, on a vu que la constante $A$ construite lors de la démonstration de la minoration de  $\lambda_0(M)-\lambda_0(N)$ dépend de façon cruciale de la géométrie des zones de transition. Dans le cas où $\phi_0$ n'est pas de Morse, lorsque l'on approxime la métrique par une suite de métriques génériques, les zones de transition correspondant aux fonctions propres de Morse ainsi obtenues ont toutes les chances de dégénérer. Il nous manque donc plus d'information sur le comportement du gradient de $\phi_0$ dans le cas général pour pouvoir trouver un domaine fondamental $C$ adapté. 

Il nous manque aussi une méthode permettant de traiter le cas où le bord du domaine a un rayon d'injectivité qui tend vers 0 à l'infini (par exemple lorsque le groupe d'automorphisme d'un revêtement entre deux variétés hyperboliques contient un élément parabolique). Cela nécessite une étude supplémentaire du comportement de la fonction propre dans ces <<bouts fins>> qui nous permettrait peut-être de trouver un domaine fondamental adapté. 

\subsection{Variétés à découpages bornés}\label{ssec:DecBorn}

Nous avons démontré notre théorème dans le cas où le graphe est à valence constante, et où toutes les cellules sont identiques. Lorsque les cellules sont de volume fini, la première fonction propre est constante (elle se recolle alors évidemment très bien !). Nous pouvons alors généraliser nos méthodes au cadre suivant :

\begin{defi}
Soit $M^n = \cup_{i\in I} M_i$ une partitions en morceaux d'intérieur non vide, de volume fini, uniformément majoré par $V_M$ et minoré par $V_m$, à bords $\Cl C^1$ par morceaux, d'intérieurs disjoints, tels que le graphe d'adjacence des $M_i$ (deux éléments sont reliés s'ils sont voisins au sens du paragraphe \ref{sec:CoinsRevet}) soit localement fini à valence bornée par $v>0$. On dit que le découpage de $M$ dans les $M_i$ est \emph{spectralement borné} s'il existe des constantes $\eta,V,R,\Kappa$ telles que pour tous $i,j\in I$, on ait :
\begin{enumerate}
\item le volume des $\bd M_i$ est inférieur à $V$ ;
\item le trou spectral de $M_i$ est supérieur à $\eta>0$ ;
\item pour tous $i,j\in G, i\sim j$, il existe des \ind{zones de transition}
$$\alpha_{ij}\subset \bd M_i\cap \bd M_j$$
ouvertes d'adhérences compactes, qui admettent des voisinages tubulaires $T_{ij}$ disjoints de largeur $R$ ;
\item il existe une fonction $\theta_{\inf} : [0,R]\ra \Bb R^*_+$, valant $1$ en $0$, telle que sur tout tube $T_{ij}$ au voisinage de la zone de transition $\alpha_{ij}$, l'élément de volume $\theta_{ij}$ vérifie, en coordonnées de Fermi :
$$r\fa \frac{\theta_{\inf}(r)}{\theta(x,r)}$$
est décroissante pour tout $x\in\alpha_{ij}$.
\item les fonctions d'oscillation de l'élément de volume (voir Définition \ref{def:VarVol}) sont uniformément bornées par $\Kappa$.
\end{enumerate}
\end{defi}
	
	Les méthodes développées à la section \ref{sec:Demo} permettent alors d'obtenir le théorème suivant :
	
	\begin{theo}
	Sous les hypothèses ci-dessus, il existe $A_1$ et $A_2$ ne dépendant que de $n, v, V_m, V,R,\Kappa$ et $\theta_{\inf}$ telles que 
	$$A_1\eta\mu_0(G)\leq \lambda_0(M)\leq A_2\g h(G).$$
	\end{theo}

\begin{proof}
	La preuve de la majoration est identique à celle du théorème \ref{theo:MajoGene} : il suffit pour cela de la reproduire en remplaçant $\phi_\epsilon$ par la fonction constante $1$ (qui se recolle évidemment). 

	La preuve de la minoration est légèrement différente : nous prenons ici $\phi_0 = 1$, qui n'est plus normalisée. Soit $f_\epsilon$ une fonction à support compact dans $M$ d'énergie $\epsilon-$proche de $\lambda_0 = \lambda_0(M)$. La discrétisation de $f_\epsilon$ sera alors définie, pour tout $i\in G$, par :

	$$a_i^2 = \frac{1}{\Vol(C_i)}\int_{C_i}f_\epsilon^2,\ b_i = \frac{1}{\Vol(C_i)}\int_{C_i}f_\epsilon,$$
	et
	$$c_i^2 = \frac{1}{\Vol(C_i)}\int_{C_i}(f_\epsilon-b_i)^2.$$
	On pose de nouveau $$g_i = f_\epsilon - b_i.$$
	Notre nouvelle normalisation permet que, de nouveau, $$a_i² = b_i² + c_i²,$$
	et le quotient de Rayleigh de $f_\epsilon$ s'écrit :
	\beq\label{eq:Energborn}
	\lambda_0(M)+\epsilon\geq \frac{\int_M |\nabla f_\epsilon|^2}{\int_M |f_\epsilon|^2} =\frac{\sum_i\norm{\nabla g_i}^2_{C_i}}{\sum_i a_i^2\Vol(C_i)}.\eeq

	L'étape clé de notre démonstration sera de nouveau le lemme suivant :
	\begin{lemm}
	 Il existe une constante $b$ ne dépendant que de $n, R, V_1, V_2$ et $\kappa$  telle que
	$$\sum_i\norm{\nabla g_i}^2_{C_i}\geq b\sum_{i\sim j} (b_i-b_j)².$$
	\end{lemm}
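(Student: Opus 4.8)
La stratégie est de recopier la démonstration du Lemme \ref{lemm:AnaComb}, en y remplaçant la première fonction propre par la fonction constante $\phi_0\equiv 1$ (de sorte que $\Phi_0(0)=1$) et en faisant jouer au trou spectral $\eta$ de chaque morceau $C_i=M_i$ le rôle de $\lambda_1$. Comme le bas du spectre de Neumann de $C_i$ vaut $0$ et est réalisé par les constantes, et comme $g_i=f_\epsilon-b_i$ est de moyenne nulle sur $C_i$ par définition de $b_i$, elle est orthogonale aux constantes ; l'hypothèse (2) fournit donc l'inégalité de Poincaré $\norm{\nabla g_i}^2_{C_i}\geq \eta\norm{g_i}^2_{C_i}$. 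Je commencerais alors, comme pour l'inégalité (\ref{eq:MinLoc}), par écrire
$$\sum_i\norm{\nabla g_i}^2_{C_i}\geq \frac{1}{2}\sum_{i\sim j}\left(\norm{\nabla g_i}^2_{T_{ij}^+}+\norm{\nabla g_j}^2_{T_{ij}^-}+\eta\bigl(\norm{g_i}^2_{T_{ij}^+}+\norm{g_j}^2_{T_{ij}^-}\bigr)\right),$$
ce qui ramène tout à une minoration locale, arête par arête, de la forme du Lemme \ref{lemm:AnaComb2}.

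Sur chaque tube $T_{ij}$, j'introduirais les moyennes horizontales $G_i(r)=\frac{1}{V_{ij}(r)}\int_{\alpha_{ij}}g_i(x,r)\theta^{n-1}(x,r)dV_\alpha(x)$. La continuité de $f_\epsilon=g_i+b_i$ en travers de $\alpha_{ij}$ donne, puisque $\phi_0\equiv 1$, l'égalité $|g_i(x,0)-g_j(x,0)|=|b_i-b_j|$, d'où par intégration $|G_i(0)-G_j(0)|=|b_i-b_j|$. Quitte à échanger $i$ et $j$ et à changer le signe de $G_i$, je supposerais $G_i(0)\geq \frac{1}{2}|b_i-b_j|$ et poserais $R_0=\inf\{r\in\,]0,R]:G_i(r)\leq \frac{1}{4}|b_i-b_j|\}$. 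Si $R_0=R$, la moyenne $G_i$ reste grande sur tout le demi-tube, et l'inégalité de Cauchy-Schwarz horizontale $\norm{g_i}^2_{T_{ij}^+}\geq \int_0^R G_i(r)^2V_{ij}(r)dr$ donne directement $\norm{g_i}^2_{T_{ij}^+}\geq A_1(b_i-b_j)^2$ avec $A_1=\frac{\Vol(T_{ij}^+)}{16}$.

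Le cas $R_0<R$ est le point central de l'argument : la fonction $G_i$ chute de $\frac{1}{2}|b_i-b_j|$ à $\frac{1}{4}|b_i-b_j|$, et les Lemmes \ref{lemm:MinEnerg}, \ref{lemm:CompFoncHarm} et \ref{lemm:MinEnergK} se recopient sans changement, puisqu'ils ne font intervenir que la géométrie du tube — contrôlée ici par les hypothèses (3), (4) et (5) — et les valeurs $P=G_i(0)$, $Q=G_i(R_0)$. J'en tirerais $\int_{T_{ij}}|\nabla G_i|^2\geq A_2(b_i-b_j)^2$, avec $A_2=\frac{\Vol(\alpha_{ij})}{16\,U_{\inf}(R)}$ et $U_{\inf}(R)=\int_0^R\theta_{\inf}(u)^{-(n-1)}du$ ne dépendant que de $R,n$ et $\theta_{\inf}$. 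Il resterait à relier l'énergie de $g_i$ à celle de $G_i$ : en décomposant le gradient je ferais apparaître le terme d'oscillation $\beta_{ij}$, majoré par $\Kappa$ grâce à l'hypothèse (5), et je séparerais selon que ce terme est minoritaire ou non (cas 2.a et 2.b), d'où soit $\norm{\nabla g_i}^2_{T_{ij}^+}\geq \frac{A_2}{4}(b_i-b_j)^2$, soit $\norm{g_i}^2_{T_{ij}^+}\geq A_3(b_i-b_j)^2$ avec $A_3=\frac{A_2}{4\Kappa^2}$.

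En rassemblant les trois situations et en posant $b=\frac{1}{2}\min\{\eta A_1,\ \frac{A_2}{4},\ \eta A_3\}$, j'obtiendrais la minoration locale $\norm{\nabla g_i}^2_{T_{ij}^+}+\norm{\nabla g_j}^2_{T_{ij}^-}+\eta(\norm{g_i}^2_{T_{ij}^+}+\norm{g_j}^2_{T_{ij}^-})\geq 2b(b_i-b_j)^2$, puis, par sommation sur les arêtes, la conclusion annoncée. Les encadrements de volume $V_m\leq \Vol(C_i)\leq V_M$ et $\Vol(\bd M_i)\leq V$ assurent que les constantes $A_1,A_2,A_3$ restent uniformes d'une arête à l'autre, pourvu que les zones de transition aient une aire uniformément minorée (qu'il convient d'inclure dans la notion de découpage spectralement borné) ; $b$ ne dépend alors que de $n,R,V_m,V_M,\Kappa$ et $\eta$. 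Le point délicat restera, comme dans la preuve principale, la sous-alternative 2.b du cas $R_0<R$ : il faudra vérifier que l'unique fonction $\theta_{\inf}$ de l'hypothèse (4), commune à tous les tubes, suffit à rendre les lemmes de comparaison uniformes, indépendamment de l'arête considérée.
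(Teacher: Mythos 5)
Your proposal is correct and takes essentially the same route as the paper: the paper's own proof consists of the single remark that the gluing of the constant function $1$ across each $\alpha_{ij}$ allows one to reproduce the proof of Lemma \ref{lemm:AnaComb} verbatim, which is exactly what you carry out in detail (with $\Phi_0(0)=1$ and the spectral gap $\eta$ of the pieces playing the role of $\lambda_1$). Your closing caveat, that the transition zones need a uniform lower bound on their volume for the constants $A_1,A_2,A_3$ to be uniform over all edges, is a point the paper leaves implicit in the constants $V_1,V_2$ of the statement, and is well taken.
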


Le recollement de la fonction $1$ de par et d'autre de chaque composante de bord $\alpha_{ij}$ permet de reproduire exactement la preuve du lemme \ref{lemm:AnaComb}. Le reste de la démonstration s'adapte aisément.
\end{proof}

	On rappelle que d'après le théorème \ref{theo:CheegComb}, on a $\mu_0(G)\geq \frac{\g h(G)^2}{2v}.$ Ainsi, le résultat de ce théorème s'écrit encore :
	$$A'_1\g h(G)^2\leq \lambda_0(M)\leq C_2 \g h(G),$$
où le graphe $G$ correspond à celui d'un découpage de $M$ en morceaux de volume borné. Nous avons là un analogue discret de résultats plus connus de J. Cheeger et P. Buser : ils montrent par des méthodes analytiques que pour toute variété de dimension $n$ dont la courbure de Ricci est minorée par $-\kappa$,
	$$\frac{\g h²(M)}{4}\leq \lambda_0(M)\leq C(n,\kappa)\g h(M),$$
	avec $$\g h(M) = \inf\frac{\Vol_{n-1}(\bd M')}{\Vol_n(M')},$$
	où $M'$ décrit l'ensemble des hypersurfaces bordant un domaine compact de $M$. La minoration vient du classique \cite{Cheeg70} et est valable quelle que soit $M$. La majoration est initialement un résultat de P. Buser (\cite{Bus82}), mais sous cette formulation, on en trouvera une démonstration plus directe dans \cite{Cana92}. Cependant, pour évaluer effectivement $\lambda_0$ à partir de ces estimées de la constante de Cheeger, il est toujours délicat de trouver une famille d'hypersurfaces qui approchent $\g h(M)$ lorsque $M$ n'est pas compacte. Notre résultat permet d'obtenir assez simplement des bornes sur le bas du spectre, en particulier de montrer qu'il est strictement positif lorsqu'il existe un découpage borné non moyennable. Il suggère aussi une méthode de calcul approché de $\lambda_0(M)$ par discrétisation en éléments finis, où il suffit de contrôler le volume des éléments et de leurs bords. Lorsque la variété comporte des cusps, cette méthode est alors plus efficace que les méthodes traditionnelles de discrétisation avec des boules de rayon constant (comme par exemple dans \cite{Man05}), qui nécessitent de contrôler le comportement des fonctions propres à l'infini dans les cusps. En effet, dès que le <<morceau>> est de volume fini, notre méthode ne s'intéresse qu'à son bord relatif à l'intérieur de la variété.

\subsection{Perspectives}

On a déjà vu que pour pouvoir appliquer nos méthodes à tous les revêtements riemanniens, nous aurions besoin de construire en général un domaine fondamental sur lequel la première fonction propre de la base se relève en une fonction qui vérifie les conditions de Neumann, et qu'il serait suffisant pour cela de montrer que le gradient de la première fonction propre est simplement stratifié. Cette dernière question est intéressante en elle-même.
\pgh
La majoration que nous avons obtenue dans le cas où le graphe n'est pas moyennable et où chaque cellule n'admet qu'un nombre de voisins n'est certainement pas optimale. On peut espérer l'améliorer, en faisant intervenir des fonctions discrètes sur le graphe qui tendent vers un minimum pour l'énergie. On peut alors espérer obtenir, pour une variété $G$-périodique modelée sur une cellule $C$ n'ayant qu'un nombre fini de voisins,
$$A\eta\mu_0(G)\leq \lambda_0(M)-\lambda_0(C)\leq B\mu_0(G).$$
Ce serait particulièrement intéressant pour la méthode de calcul approchée du bas du spectre d'une variété admettant un découpage spectralement borné que nous venons de présenter. On se trouve alors exactement dans une généralisation des méthodes de \cite{CoulSal95}, où l'on discrétise une variété non compacte à l'aide de boules pondérées par leur volume.
\pgh
Notons qu'il n'est pas toujours facile d'expliciter les constantes $A$ et $A'$ qui figurent dans nos théorèmes principaux ; un cas cependant permet de les écrire de façon intéressante : lorsque $C$ est une surface de Riemann dont le bord est constitué de $v$ géodésiques fermées disjointes. Dans \cite{Colb85} et \cite{ColbCol88}, B. Colbois et Y. Colin de Verdière montrent que si l'on a un nombre fini de surfaces de Riemann à bords géodésiques reliés en suivant la combinatoire d'un graphe $G$, lorsque le graphe $G$ est fini, si l'on pince uniformément les géodésiques de jonction, le bas du spectre tend vers 0 et sa vitesse est précisément donnée par $\mu_0(G)$ et le volume des cellules. Notons que la méthode utilisée dans \cite{ColbCol88} (reposant sur le \emph{Lemme des petites valeurs propres}) ne s'applique pas aux graphes infinis. On peut espérer, à partir des méthodes que nous venons de présenter, généraliser les résultats de \cite{ColbCol88} aux variétés $G$-périodiques où $G$ est infini et la cellule est une surface de Riemann à bords géodésiques.
\pgh
Il serait aussi intéressant d'étudier, sur ces variétés périodiques généralisées, le spectre d'autres opérateurs, en particulier l'opérateur de Schrödinger : on se trouve alors dans la situation, physiquement intéressante, d'une macromolécule périodique dont on cherche les niveaux d'énergie.

\bibliographystyle{alpha}
\def\cprime{$'$}

\end{document}